\documentclass[12pt]{article}

\usepackage{amsmath, amssymb, amsthm}
\usepackage{mathrsfs}
\usepackage{hyperref}
\usepackage{tikz}
\usepackage{enumitem}
\usepackage{geometry}
\usepackage{graphicx}
\usepackage{float}
\usepackage{hyphenat}

\theoremstyle{plain}
\newtheorem{theorem}{Theorem}[section]

\newtheorem{proposition}[theorem]{Proposition}
\newtheorem{corollary}[theorem]{Corollary}
\newtheorem{definition}[theorem]{Definition}
\newtheorem{remark}[theorem]{Remark}

\theoremstyle{example}
\newtheorem{example}[theorem]{Example}

\title{On Some Bi-Cayley Graphs over Cyclic Groups of Order $p^2q^2$ and Related Extensions}
\author{Iqbal Adisna Atmaja$^{1}$, Yeni Susanti$^{1}$, Ahmad Erfanian$^{2}$ \\
  {\small $^{1}$Universitas Gadjah Mada, Indonesia} \\
  {\small $^{2}$Ferdowsi University of Mashhad, Iran}}
\date{}

\begin{document}

\maketitle

%\begin{abstract}
%We investigate structural and combinatorial properties of a class of Bi-Cayley graphs defined over cyclic groups of order $p^2q^2$, where $p$ and $q$ are distinct primes. We describe their connectivity, Eulerian property, girth, clique number, chromatic number, and diameter. In particular, we show that the graphs are connected, have girth $3$, and are biregular with well-determined degree. Further observations and open problems are discussed.
%\end{abstract}

\begin{abstract}
    We investigates structural and combinatorial properties of Bi-Cayley graphs defined over cyclic groups of order $p^2q^2$, where $p$ and $q$ are distinct primes. We begin by describing their fundamental group-theoretic underpinnings. The main focus is on analyzing their connectivity, girth, clique number, chromatic number, diameter, and independence number. It is shown that these Bi-Cayley graphs are connected, biregular with explicitly determined degrees, and possess girth three. Furthermore, we prove that their diameter is equal to five. We further extend several results to Bi-Cayley graphs over arbitrary finite groups under suitable restrictions on the connecting set, with particular emphasis on the case where the connecting set consists of all its involutions. These results clarify structural similarities and differences between Cayley graphs and their Bi-Cayley generalizations. 
\end{abstract}

\section{Introduction}
%Graph constructions derived from algebraic structures provide an important bridge between combinatorial methods and algebraic reasoning. Among these, graphs associated with groups, such as Cayley graphs (\cite{cayley}) and their generalizations, are particularly valuable because they encode algebraic information through graph-theoretic properties such as adjacency, connectivity, symmetry, and colorability. The study of these graphs allows structural features of a group to be visualized and analyzed through combinatorial frameworks. 
%===============================================
Graph constructions derived from algebraic structures provide an important bridge between combinatorial methods and algebraic reasoning. A central construction in this area is the Cayley graph, introduced by Cayley in \cite{cayley}, which encodes a group through its connection set and translates algbraic relations into graph adjacency. Many fundamental graph invariants, such as connectivity, girth, clique number, chromatic number, diameter, and independence number, are therefore closely tied to algebraic properties of the underlying group and the choice of the connection set. 

Let $G$ be a finite group with identity element $e_G$, and let $S \subseteq G$ be an inverse-closed subset with $e_G \notin S$. The Cayley graph $\mathrm{Cay}(G,S)$ is the graph with vertex set $G$, where two vertices $x$ and $y$ are adjacent if and only if $xy^{-1} \in S$. Such graphs are $|S|$-regular and vertex transitive, and they are precisely connected when $S$ generates the whole group. When $G$ is cyclic, Cayley graphs admit particularly explicit characterization, allowing many graph parameters to be determined in terms of modular arithmetic, enabling precise analysis.

Bi-Cayley graphs extend this idea by combining two disjoint copies of a group into a single graph with edges determined by three connection sets. Given a finite group $G$ and inverse-closed subsets $S_1, S_2, S_3 \subseteq G$, where $S_1$ and $S_2$ do not contain $e_G$, the Bi-Cayley graph $\Gamma = \mathrm{BiCay}(G; S_1, S_2, S_3)$ has vertex set $\{0,1 \} \times G$. Adjacency is defined so that the induced subgraphs on $\{ 0\} \times G$ and $\{ 1\} \times G$ are the Cayley graphs $\mathrm{Cay}(G, S_1)$ and $\mathrm{Cay}(G, S_2)$, respectively, while the edges between the two parts are determined by $S_3$ such that two vertices $(0,x)$ and $(0,y)$ are adjacent if and only if $xy^{-1} \in S_3$. Thus, a Bi-Cayley graph consists of two Cayley subgraphs whose interaction is governed by an additional connecting set. This construction produces graphs with richer combinatorial behavior than ordinary Cayley graphs.

The present paper focuses on Bi-Cayley graph over cyclic groups of order $p^2q^2$, where $p$ and $q$ are distinct primes. Throughout, we identify that $\mathbb{Z}_{p^2q^2} \cong \mathbb{Z}_{p^2} \times \mathbb{Z_q^2}$, and represent each group element as an ordered pair $(g_p, g_q)$. This decomposition is realized by the mapping $\bar{k} \mapsto (\bar{k} \text{ (mod }p^2), \bar{k} \text{ (mod }q^2))$. This allows adjacency conditions in both Cayley and Bi-Cayley graphs to be analyzed componentwise.

Our choice of connection sets is motivated by earlier work on Cayley graphs defined by elements of prescribed orders. In \cite{tolue15}, Tolue et al. studied Cayley graphs generated by elements of prime order, while in \cite{susanti19}, Susanti and Erfanian analyzed Cayley graphs on cyclic groups of order $p^2q^2$ generated by elements of prime-square order, obtaining explicit results for clique number, chromatic number, and independence number. These results suggest that restricting connection sets by element order yields strong control over graph invariants. In the Bi-Cayley setting, the presence of two Cayley subgraphs introduces additional interactions that significantly influence global properties such as diameter and independence number.

In this paper, we analyze a class of Bi-Cayley graphs over cyclic group $\mathbb{Z}_{p^2q^2}$ whose connection sets are determined by element orders. We determine their connectivity, degree structure, girth, clique number, chromatic number, diameter, and independence number. We write $\omega(G)$, $\chi(G)$, $\mathrm{diam}(G)$ and $\alpha(G)$ for the clique number, chromatic number, diameter, and independence number of the graph $G$, respectively, while we denote the degree of a vertex $v$ as $deg_G(v)$. A key feature of the analysis is the decomposition of the Bi-Cayley graph into two Cayley subgraphs with different structures, whose interplay governs the behavior of the entire graph.

Although the main results are derived in the cyclic setting, several arguments depend only on general group-theoretic properties rather than commutativity. This observation allows parts of the theory to be extended to Bi-Cayley graphs over arbitrary finite groups under suitable restrictions on the connecting sets, clarifying which phenomena are intrinsic to the Bi-Cayley construction and which depend on the arithmetic of cyclic groups.

\subsection{Related Works}
%=================================
Cayley graphs, introduced by Cayley \cite{cayley}, form a central class of graphs in algebraic graph theory due to their close connection with group structure. Their fundamental properties and applications have been extensively studied (see, for instance, \cite{bondy, godsil, wilson}). In particular, many graph invariants of Cayley graphs, such as connectivity and regularity, are determined directly by the generating behavior of the chosen connection set.

Bi-Cayley graphs is a natural generalization of Cayley graphs. Subsequent work by Conder et al. in \cite{conder16} investigated structural properties of Bi-Cayley graphs, including connectivity conditions, automorphism groups, and symmetry-related aspects. These studies established a general framework for Bi-Cayley graphs over arbitrary finite groups, primarily emphasizing structural and transitivity properties. 

When the underlying group is cyclic, the analysis of Cayley and Bi-Cayley graphs becomes more tractable due to the simplicity of subgroup structure and the availability of number-theoretic tools (refer to \cite{dummit, malik, rose78}). This setting has motivated several studies focusing on Cayley graphs defined by elements of prescribed order. Tolue et al. in \cite{tolue15} introduced prime-order Cayley graphs and showed that parameters such as connectivity, diameter, girth, chromatic number, and clique number are strongly influenced by element-order restrictions. Extending this direction, Susanti and Erfanian in \cite{susanti19} studied prime-square-order Cayley graphs on cyclic groups of order $p^2q^2$, obtaining explicit results for independence number, and clique number. Later, Suparwati in \cite{suparwati26} refined the results on cyclic groups of prime-cubic order in a similar approach.

These works demonstrate that imposing order-based restrictions on connection sets yields strong control over the combinatorial structure of Cayley graphs. At the same time, existing results on Bi-Cayley graphs indicate that their global properties are likewise governed by the interaction between the underlying group and the chosen connecting sets \cite{liang08, conder16}. However, precise combinatorial invariants for Bi-Cayley graphs, particularly over cyclic groups and under element order constraints, have received comparatively fewer works.

The present work addresses this gap by studying Bi-Cayley graphs over cyclic groups of order $p^2q^2$ with connection sets defined by element orders. Additionally, we indicate how several results obtained in the cyclic setting extend to Bi-Cayley graphs over more general finite groups, thereby complementing existing literature that primarily focuses on symmetry and connectivity rather than explicit graph parameters.

\sloppy
\section{Results and Discussion}\label{sec:main results}
\subsection{Bi-Cayley Graph Over Cyclic Groups}
%\begin{theorem}\label{cayley connected components}
%    	Let $G$ be a group and let $S \subseteq G$ such that $e_G \notin S$ and $S=S^{-1}$. Set $H := \langle S \rangle.$
%    	Then the components of the Cayley graph $\mathrm{Cay}(G,S)$ are exactly the right cosets $gH$ for every $g \in G$.
%    \end{theorem}
%
%The adjacency relation in a Cayley graph may be interpreted in terms of a group action of $G$ acting on the set $S$. Specifically, for $g \in V(\mathrm{Cay}(G,S))$ and $s \in S$, the vertex $g$ is adjacent to $gs$ which corresponds to the action of the group element $s$ on $g$. This group action induces a single orbit, namely the entire vertex set $V(\mathrm{Cay}(G,S))$. Consequently, the Cayley graph consists of exactly one orbit under the action of $G$.
%
%Cayley graph also posses the vertex-transitive property. That is, for any vertices $u, v \in V(\mathrm{Cay}(G,S))$, there exists a graph automorphism $f$ such that $f(u)=v$. This property of Cayley graph can be further generalized and relaxed, allowing the construction of a related yet structurally richer class of graphs. This consideration motivates the introduction of Bi-Cayley graphs, which preserve certain fundamental features of Cayley graphs while incorporating additional structural complexity. The preceeding subsection is devoted to their formal definition and initial properties.
%
A Bi-Cayley graph may be regarded as a generalization of a Cayley graph obtained by relaxing certain structural conditions. As a consequence, it allows the graph to have two distinct orbit while not necessarily inherit all properties of Cayley graphs. The precise definition of a Bi-Cayley graph is given below. 

\begin{definition}
Let $G$ be a group and let $S_1, S_2, S_3 \subseteq G$ be nonempty such that $S_1$ and $S_2$ are inverse closed and do not contain the identity element of $G$. The \emph{Bi-Cayley graph} $\Gamma = \mathrm{BiCay}(G; S_1, S_2, S_3)$ over a group $G$ is the graph with vertex set $V(\Gamma) = \{0,1\} \times G$ and edge set determined by, 
    \begin{align*}
        (i,x)(j,y) \in E(\Gamma) \iff \begin{cases}
            xy^{-1} \in S_1 \hspace{0.5cm} i = j = 0\\
            xy^{-1} \in S_2 \hspace{0.5cm} i = j = 1\\
            xy^{-1} \in S_1 \hspace{0.5cm} i \neq j, 
        \end{cases}
    \end{align*}
for any two different vertices $(i,x), (j,y) \in V(\Gamma)$.
\end{definition}

The Bi-Cayley graph $\Gamma = \mathrm{BiCay}(G;S_1,S_2,S_3)$ is referred to as Bi-Cayley graph $\Gamma$ over a group $G$. To provide clearer understanding, we give an example of Bi-Cayley graph as follows.

\begin{example}
    Let $\mathrm{Sym}_3$ denote the symmetric group whose elements are denoted as follows, 
    			\begin{align*}
    				\rho_1 = \left(
    				\left.
    				\begin{matrix}
    					1 & 2 & 3 \\
    					1 & 2 & 3
    				\end{matrix} 
    				\right. \right), 
    				\hspace{0.5cm} 
    				\rho_2 &= \left(
    				\left.
    				\begin{matrix}
    					1 & 2 & 3 \\
    					2 & 3 & 1
    				\end{matrix} 
    				\right. \right),
    				\hspace{0.5cm}
    				\rho_3 = \left(
    				\left.
    				\begin{matrix}
    					1 & 2 & 3 \\
    					3 & 1 & 2
    				\end{matrix} 
    				\right. \right), \\
    				\theta_1 = \left(
    				\left.
    				\begin{matrix}
    					1 & 2 & 3 \\
    					1 & 3 & 2
    				\end{matrix} 
    				\right. \right), 
    				\hspace{0.5cm} 
    				\theta_2 &= \left(
    				\left.
    				\begin{matrix}
    					1 & 2 & 3 \\
    					3 & 2 & 1
    				\end{matrix} 
    				\right. \right),
    				\hspace{0.5cm}
    				\theta_3 = \left(
    				\left.
    				\begin{matrix}
    					1 & 2 & 3 \\
    					2 & 1 & 3
    				\end{matrix} 
    				\right. \right).
    			\end{align*}
                Consider the subsets $S_1 = \{\rho_2, \rho_3\}$, $S_2 = \{\theta_1, \theta_2\}$, and $S_3 = \{\rho_1\}$. The corresponding Bi-Cayley graph $\mathrm{BiCay}(\mathrm{Sym}_3; S_1, S_2, S_3)$ is illustrated in the following figure.
			\begin{figure}[H] 
				\centering
				\begin{tikzpicture}[
					every node/.style={circle,
						draw=black,
						thick,
						minimum size=4mm,
						inner sep=0pt,
						font=\scriptsize,
						text=black},
					every path/.style={draw=black, thick}
					]
					\node (a11) at (-2,3) {$(1, \rho_1)$};
					\node (a21) at (-1,2) {$(1, \rho_2)$};
					\node (a31) at (-3,1) {$(1, \rho_3)$};
					\node (b11) at (1,3) {$(1, \theta_1)$};
					\node (b21) at (2,2) {$(1, \theta_2)$};
					\node (b31) at (0,1) {$(1, \theta_3)$};
					\node (a10) at (-2,0) {$(0, \rho_1)$};
					\node (a20) at (-1,-1) {$(0, \rho_2)$};
					\node (a30) at (-3,-2) {$(0, \rho_3)$};
					\node (b10) at (1,0) {$(0, \theta_1)$};
					\node (b20) at (2,-1) {$(0, \theta_2)$};
					\node (b30) at (0,-2) {$(0, \theta_3)$};
					
					\draw (a11) -- (b11);
					\draw (a11) -- (b21);
					\draw (a21) -- (b11);
					\draw (a21) -- (b31);
					\draw (a31) -- (b21);
					\draw (a31) -- (b31);
					\draw (a11) -- (a10);
					\draw (a21) -- (a20);
					\draw (a31) -- (a30);
					\draw (b11) -- (b10);
					\draw (b21) -- (b20);
					\draw (b31) -- (b30);
					\draw (a10) -- (a20);
					\draw (a20) -- (a30);
					\draw (a30) -- (a10);
					\draw (b10) -- (b20);
					\draw (b20) -- (b30);
					\draw (b30) -- (b10);
					
				\end{tikzpicture}
				\caption{Bi-Cayley graph $\mathrm{BiCay}(\mathrm{Sym}_3; S_1, S_2, S_3)$}
				\label{fig:contoh graf bicayley 1}
			\end{figure}
\end{example}

\begin{remark}
    Section ($3$) will specifically consider the \emph{Bi-Cayley} graph $\mathrm{BiCay}(G; S_1, S_2, S_3)$ for a cyclic group $G = \langle a \rangle$ of order $p^2q^2$ for distinct primes $p$ and $q$, with $S_1 = \{ x\in G : |x|= p \text{ or } q\}$, $S_2 = \{ x\in G : |x|= p^2 \text{ or } q^2\}$, and $S_3 = \{ e_G \}$. For brevity, the graph $\mathrm{Cay}(G;S_1, S_2, S_3)$ will simply be denoted as $\Gamma$.    
\end{remark}

%\begin{theorem}\label{elements of order d}
%    Let $G = \langle a \rangle$ be a finite cyclic group of order $n$. The number of elements of order $d$, given $d \mid n$, is $\phi (d)$.  
%\end{theorem}

To provide clearer context, we give the example of Bi-Cayley Graph over cyclic groups as follows.

%Theorem \ref{elements of order d} gives us these results on the size of set $S_1$ and $S_2$. 
%\begin{align*}
%    |S_1| = \phi (p) + \phi(q) = p+q -2 \hspace{0.2cm}\text{ and }\hspace{0.2cm} |S_2| = \phi(p^2) + \phi(q^2) = p(p-1) + q(q-1). 
%\end{align*}

\begin{example}
    Consider the cyclic group $G$ of order $p^2q^2$. Let $p=2$ and $q=3$. We have that $G$ is isomorphic to $\mathbb{Z}_{36}$. We see that $S_1 = \{ \bar{12}, \bar{18}, \bar{24} \}$, $S_2 = \{ \bar{4}, \bar{8}, \bar{9}, \bar{16}, \bar{20}, \bar{27}, \bar{28}, \bar{32}\}$, and $S_3 = \{ \bar{0}\}$. Thus, we have $V(\Gamma) = \{ 0,1\} \times \mathbb{Z}_{36}$ an for each $(0,\bar{x})$, it is adjacent to four other vertices, that is, $(0, \bar{x}+ \bar{12}), (0, \bar{x}+ \bar{18}),(0, \bar{x}+ \bar{24}),$ and $(1,\bar{x})$. Meanwhile, each $(1,\bar{y})$ is adjacent to nine other vertices, that is, $(1, \bar{y} + \bar{4}),(1, \bar{y} + \bar{8}),(1, \bar{y} + \bar{9}),(1, \bar{y} + \bar{16}),(1, \bar{y} + \bar{20}),(1, \bar{y} + \bar{27}),(1, \bar{y} + \bar{28}),(1, \bar{y} + \bar{32}),$ and $(0,\bar{y})$. Therefore, the induced subgraphs may be illustrated as follows. 
    \begin{figure}[H]
        \centering
        \begin{minipage}{0.45\textwidth}
            \centering
            \includegraphics[width=\linewidth]{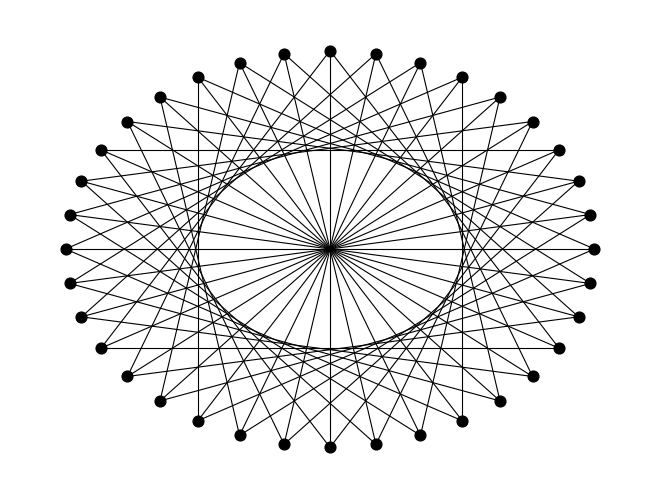}
            \caption{Subgraph induced by $\{ 0\} \times \mathbb{Z}_{36}$}
            \label{fig:gambar1}
        \end{minipage}%
    \hfill
        \begin{minipage}{0.45\textwidth}
            \centering
            \includegraphics[width=\linewidth]{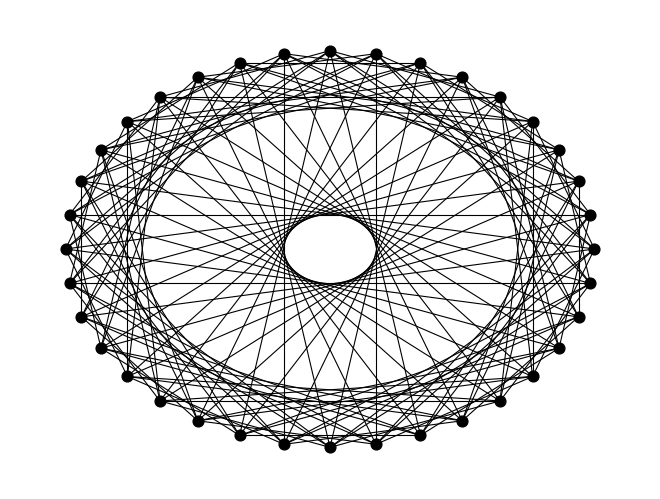}
            \caption{Subgraph induced by $\{ 1\} \times \mathbb{Z}_{36}$}
            \label{fig:gambar2}
        \end{minipage}
    \end{figure}

    The Bi-Cayley graph $\Gamma$ consists of two subgraph presented in Figure \ref{fig:gambar1} and Figure \ref{fig:gambar2}. As for every $(0,\bar{x})$ in the subgraph induced by $\{0 \} \times G$ is adjacent to $(1, \bar{x})$ in the subgraph induced by  $\{1 \} \times G$, the whole Bi-Cayley graph $\Gamma$ is as illustrated. 
    \begin{figure}[H]
        \centering
        \includegraphics[width=0.8\linewidth]{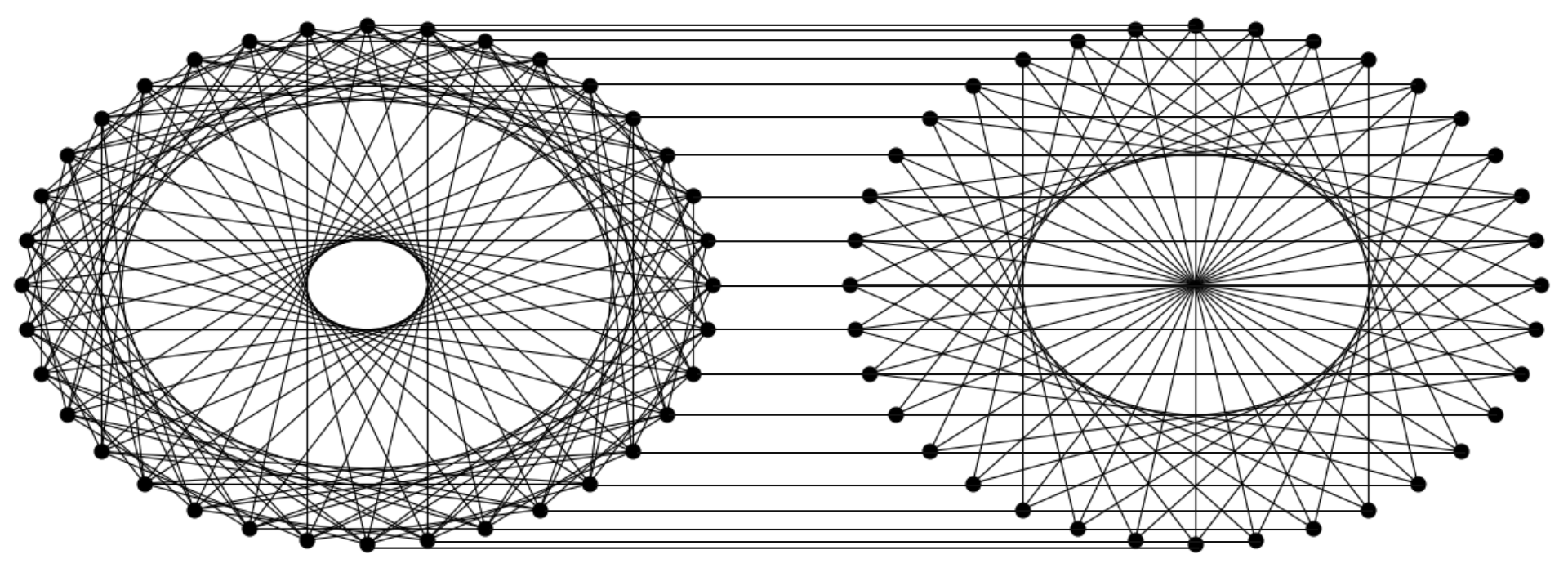}
        \caption{Bi-Cayley over $G \cong \mathbb{Z}_{36}$}
        \label{fig:placeholder}
    \end{figure}
\end{example}

%(Cayley graph connectivity criterion, Euler’s totient, etc.)
%
%This section discusses the properties of Bi-Cayley graphs $\Gamma$, including the general structure, graph connectivity, Eulerian properties, girth number, clique number, chromatic number, graph diameter, and independence number. 
%
Up to this point, we have defined the Bi-Cayley graph and described how adjacency is determined through $S_1,S_2$, and $S_3$. A natural question arises: how does this structure relate to the Cayley graph itself? Interestingly, a Bi-Cayley graph can be viewed as being constructed from two Cayley graphs on $G$. This explains the use of the prefix “Bi”, as the Bi-Cayley graph consists of two such copies connected according to $S_3$.

\begin{proposition}
The Bi-Cayley graph $\Gamma = \mathrm{BiCay}(G; S_1,S_2,S_3)$ consists of two Cayley subgraphs.
\end{proposition}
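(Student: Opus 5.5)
The plan is to interpret ``consists of two Cayley subgraphs'' precisely: the subgraphs of $\Gamma$ induced on $\{0\}\times G$ and on $\{1\}\times G$ are isomorphic to $\mathrm{Cay}(G,S_1)$ and $\mathrm{Cay}(G,S_2)$, respectively. Since these two vertex sets partition $V(\Gamma)=\{0,1\}\times G$, every vertex lies in exactly one of them. The only edges not inside either copy are the ones joining the two parts, and these are governed by $S_3$.

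First, consider the projection maps $\varphi_0 : \{0\}\times G \to G$, $(0,x)\mapsto x$, and $\varphi_1 : \{1\}\times G \to G$, $(1,x)\mapsto x$. Each is clearly a bijection onto the vertex set of $\mathrm{Cay}(G,S_i)$. Next, take two distinct vertices $(0,x),(0,y)$. By the first clause of the adjacency rule in the definition, they are adjacent in $\Gamma$ iff $xy^{-1}\in S_1$, which is exactly adjacency of $x$ and $y$ in $\mathrm{Cay}(G,S_1)$. Hence $\varphi_0$ is a graph isomorphism from $\Gamma[\{0\}\times G]$ onto $\mathrm{Cay}(G,S_1)$. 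The same argument with the second clause shows that $\varphi_1$ is an isomorphism from $\Gamma[\{1\}\times G]$ onto $\mathrm{Cay}(G,S_2)$.

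The hypotheses that $S_1,S_2$ are inverse-closed and omit $e_G$ should be checked so that these Cayley graphs are genuine simple undirected graphs. If $xy^{-1}\in S_i$, then $yx^{-1}=(xy^{-1})^{-1}\in S_i$, so the relation is symmetric, and $e_G\notin S_i$ rules out loops. Finally, the remaining edges of $\Gamma$ are those $(0,x)(1,y)$ with $xy^{-1}\in S_3$; these make up the connecting part, so $E(\Gamma)$ is the disjoint union of the edges of the two Cayley copies and the $S_3$-edges.

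The main obstacle is not mathematical but notational. The definition as printed writes $S_1$ in the case $i\neq j$, and the introduction writes $(0,x),(0,y)$ for the cross edges. I would state explicitly that the intended condition for $i\neq j$ is $xy^{-1}\in S_3$, consistent with the introduction and the examples. The proof of the proposition itself does not depend on this, since it concerns only the induced subgraphs within each part.
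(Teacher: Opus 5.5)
Your proof is correct and takes the same route as the paper: partition $V(\Gamma)$ into $\{0\}\times G$ and $\{1\}\times G$, then identify the induced subgraphs with $\mathrm{Cay}(G,S_1)$ and $\mathrm{Cay}(G,S_2)$. The paper simply calls this identification clear, whereas you make it explicit through the projections $\varphi_0,\varphi_1$; your remark that the case $i\neq j$ in the definition should read $xy^{-1}\in S_3$ is also right, and as you say it does not affect this statement.
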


\begin{proof}
The vertex set of $\Gamma$ decomposes into two disjoint sets $\{0\} \times G$ and $\{1\} \times G$. It is clear that the subgraph induced by $\{0\} \times G$ is the Cayley graph $\mathrm{Cay}(G,S_1)$ and the subgraph induced by $\{ 1\}\times G$ is the Cayley graph $\mathrm{Cay}(G,S_2)$. Therefore the Bi-Cayley graph $\Gamma = \mathrm{BiCay}(G;S_1,S_2,S_3)$ consists of two Cayley subgraphs. 
\end{proof}

\begin{remark}
		Since $\{0\} \times G$ and $\{1\} \times G$ are a group and are also isomorphic to $G$, we essentially have that $V(\Gamma_0) = V(\Gamma_1) = G$. Hence we can see the set  $S_1$ and $S_2$ as a subset of $\{0\} \times G$ and $\{1\} \times G$ (up to isomorphism). For subsequent discussion, the Cayley subgraph $\Gamma_0$ will be referred to as the $0$-\textit{side subgraph}, and the Cayley subgraph $\Gamma_1$ will be referred to as the $1$-\textit{side subgraph}. These notation may be used interchangeably with $\Gamma_0$ and $\Gamma_1$, respectively, since they represent the same structures.
	\end{remark}

After describing the general structure of the Bi-Cayley graph $\Gamma$, we now proceed to perform basic countings on $\Gamma$. These countings include determining the number of vertices, the number of edges, and the degree of each vertex in the graph. All of these results are summarized in the following proposition.

\begin{proposition}
    The Bi-Cayley graph $\Gamma$ has $2p^2q^2$ vertices, is biregular of degree $p(p-1)+ q(q-1)+1$ and $p+q -1$ , and has $\dfrac{p^2q^2(p^2 + q^2)}{2}$ edges.    
\end{proposition}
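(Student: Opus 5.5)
The plan is to count elements of prescribed order in the cyclic group $G=\langle a\rangle$ of order $p^2q^2$, and to use the proposition above that $\Gamma$ splits into the two Cayley subgraphs $\Gamma_0=\mathrm{Cay}(G,S_1)$ and $\Gamma_1=\mathrm{Cay}(G,S_2)$ joined by the edges coming from $S_3=\{e_G\}$. (In the definition, the cross-edges are governed by $S_3$, as the introduction states; the third case there should read $xy^{-1}\in S_3$.)

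\textbf{Vertices.} The vertex count is immediate: $V(\Gamma)=\{0,1\}\times G$ has $2|G|=2p^2q^2$ elements.

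\textbf{Sizes of $S_1$ and $S_2$.} I will use the standard fact that a cyclic group of order $n$ has exactly $\phi(d)$ elements of order $d$ for each $d\mid n$. The four orders $p,q,p^2,q^2$ are pairwise distinct, since $p\neq q$, so no element is counted twice. Hence
\begin{align*}
|S_1| &= \phi(p)+\phi(q) = p+q-2,\\
|S_2| &= \phi(p^2)+\phi(q^2) = p(p-1)+q(q-1).
\end{align*}
Both sets are inverse-closed, because $|x^{-1}|=|x|$, and neither contains $e_G$. Therefore $\Gamma_0$ is $|S_1|$-regular and $\Gamma_1$ is $|S_2|$-regular.

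\textbf{Degrees.} Since $S_3=\{e_G\}$, the cross-edges are exactly $(0,x)\sim(1,x)$ for each $x\in G$, a perfect matching between the two sides. So each vertex gains exactly one additional neighbour:
\begin{align*}
\deg_\Gamma(0,x) &= p+q-1,\\
\deg_\Gamma(1,y) &= p(p-1)+q(q-1)+1.
\end{align*}
These two values differ, since $p(p-1)+q(q-1)+1 > p+q-1$, because $p^2+q^2 > 2p+2q-2$ for primes. Hence $\Gamma$ is biregular with the stated degrees.

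\textbf{Edges.} By the handshake lemma,
\begin{align*}
|E(\Gamma)| = \tfrac12\,p^2q^2\bigl[(p+q-1)+(p^2-p+q^2-q+1)\bigr] = \tfrac12\,p^2q^2(p^2+q^2).
\end{align*}
Alternatively, one can add $\tfrac12 p^2q^2|S_1|$, $\tfrac12 p^2q^2|S_2|$ and $p^2q^2$ for the matching, which gives the same total.

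There is no real obstacle in this argument. The only points requiring care are the disjointness of the order classes, which uses $p\neq q$, and reading the cross-adjacency through $S_3$ rather than $S_1$.
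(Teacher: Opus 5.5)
Your proposal is correct and follows the same route as the paper's own proof. The paper counts the vertices directly, uses $|S_1|=\phi(p)+\phi(q)$ and $|S_2|=\phi(p^2)+\phi(q^2)$ so that the two Cayley sides are $|S_1|$- and $|S_2|$-regular, adds one to each degree for the perfect matching coming from $S_3=\{e_G\}$, and finishes with the handshake lemma. Your extra checks (the order classes are disjoint, the two degrees genuinely differ, and cross-adjacency is read through $S_3$ rather than the misprinted $S_1$) are small clarifications the paper leaves implicit.
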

 
%\begin{proof}
%Note that the vertices of the Bi-Cayley graph is the set $\{0,1 \} \times G$ hence the number of vertices is $|\{0,1 \} \times G|$ which is equal to $2 \cdot p^2q^2$. The $0$-side is a Cayley graph so it is $|S_1|$-regular, as is the $1$-side is $|S_2|$-regular. Since there is a perfect matching in each vertex between the $0$ and $1$ side, each vertex on the $0$-side has a degree of $|S_1|+1$ and each vertex on the $1$-side has a degree of $|S_2|+1$. Thus, the full Bi-Cayley graph is biregular graph with degree $|S_1|+1 = p+q-1$ and $|S_2|+1 = p(p-1) + q(q-1) + 1$. \\
%From the \emph{Handshaking Lemma}, we have that 
%\begin{align*}
%    |E(\Gamma)| = \dfrac{1}{2} \sum_{v \in V(\Gamma)} deg(v) = \dfrac{p^2q^2(p^2 + q^2)}{2},
%\end{align*}
%which completes the proof.
%\end{proof}

We examine the connectivity of the Bi-Cayley graph $\Gamma$. As we later discuss, rather than analyzing the entire graph at once, we can determine its connectivity by focusing on one of its induced Cayley subgraphs, namely $\Gamma_1$.

\begin{proposition}
    The $\Gamma_1$ subgraph of the Bi-Cayley graph $\Gamma$ is connected.
\end{proposition}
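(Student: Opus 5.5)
The plan is to use the standard criterion that a Cayley graph $\mathrm{Cay}(G,S)$ is connected exactly when $\langle S\rangle = G$. Its components are the cosets of $\langle S\rangle$, since a walk from $x$ multiplies $x$ by a product of elements of $S$. By the preceding proposition, $\Gamma_1$ is precisely the Cayley graph $\mathrm{Cay}(G,S_2)$ with $S_2=\{x\in G : |x|=p^2 \text{ or } q^2\}$. So it suffices to show that $S_2$ generates the cyclic group $G=\langle a\rangle$ of order $p^2q^2$.

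To do this, we exhibit two elements of $S_2$ whose product has order $p^2q^2$. Take $u=a^{q^2}$ and $v=a^{p^2}$. Since $|a^k| = p^2q^2/\gcd(k,p^2q^2)$, we get $|u|=p^2$ and $|v|=q^2$, so both lie in $S_2$.

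Because $G$ is abelian and $\gcd(p^2,q^2)=1$, the product $uv$ has order $\mathrm{lcm}(p^2,q^2)=p^2q^2$. Hence $uv$ generates $G$, and so $\langle S_2\rangle=G$. Alternatively, $\gcd(p^2,q^2)=1$ gives integers $s,t$ with $sp^2+tq^2=1$, so $a=v^s u^t\in\langle S_2\rangle$.

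In the identification $\mathbb{Z}_{p^2q^2}\cong\mathbb{Z}_{p^2}\times\mathbb{Z}_{q^2}$, this says $(1,0)$ and $(0,1)$ (up to unit multiples) lie in $S_2$ and generate the group. Equivalently, for any $x,y$, a path from $(1,x)$ to $(1,y)$ is obtained by writing $yx^{-1}$ as a product of powers of $u$ and $v$, each step being a multiplication by an element of $S_2$.

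There is no real obstacle here. The only point needing care is to state the Cayley connectivity criterion precisely: walks in $\mathrm{Cay}(G,S)$ correspond to words in $S$, and $S$ is inverse-closed. The paper mentions this criterion in the introduction but does not prove it, so I would recall it briefly in one line.
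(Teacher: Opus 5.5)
Your proof is correct and follows the paper's argument. The paper also picks $a^{q^2},a^{p^2}\in S_2$, uses $\langle a^{p^2},a^{q^2}\rangle=\langle a^{\gcd(p^2,q^2)}\rangle=G$ (your B\'ezout and product-order step is an explicit version of this), and then invokes the Cayley connectivity criterion, which the paper uses without proof and which your one-line walk/word justification usefully supplies.
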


\begin{proof}
    Notice that the set $S_2$ contains element of order $p^2$ and $q^2$, that is, $a^{q^2}$ and $a^{p^2}$, respectively. Observe that, 
            $\langle a^{p^2}, a^{q^2} \rangle = \langle a^{\gcd (p^2,q^2)} \rangle = \langle a^1 \rangle = G.$
    Thus, we see that $G = \langle a^{p^2}, a^{q^2} \rangle \subseteq \langle S_2 \rangle$. This means that the set $S_2$ generates group $G$. So, the $\Gamma_1$ subgraph is connected.
\end{proof}

Utilizing the connectivity property of the subgraph $\Gamma_1$, we are able to infer the connectivity of the entire Bi-Cayley graph $\Gamma$. This is formally stated in the following theorem.

\begin{theorem}
The Bi-Cayley graph $\Gamma$ is connected.
\end{theorem}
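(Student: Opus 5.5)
The plan is to reduce connectivity of $\Gamma$ to the connectivity of the $1$-side subgraph $\Gamma_1$, established in the preceding proposition, together with the matching edges supplied by $S_3 = \{e_G\}$. Since $S_3 = \{e_G\}$, the cross-edge condition between the two sides (intended as $xy^{-1} \in S_3$, as described in the Introduction) says exactly that $(0,x)$ is adjacent to $(1,x)$ for every $x \in G$. Thus the edges between $\{0\}\times G$ and $\{1\}\times G$ form a perfect matching pairing each vertex with its copy on the other side.

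The argument then proceeds in two steps. First, take two arbitrary vertices $u, v \in V(\Gamma)$ and replace each by a vertex on the $1$-side. If $u = (0,x)$, walk the single edge to $(1,x)$; if $u = (1,x)$, do nothing. Treat $v$ the same way, obtaining $(1,y)$. Second, because $\Gamma_1 = \mathrm{Cay}(G,S_2)$ is connected by the previous proposition (as $S_2$ contains $a^{p^2}$ and $a^{q^2}$, which generate $G$), there is a path from $(1,x)$ to $(1,y)$ inside $\Gamma_1$. Concatenating at most one matching edge, this path, and at most one more matching edge gives a $u$–$v$ walk in $\Gamma$. Since $u$ and $v$ were arbitrary, $\Gamma$ is connected.

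There is no real obstacle here. The only point needing care is the definition: as printed, the third case of the adjacency rule repeats $S_1$ instead of $S_3$. I will state explicitly that the cross edges are governed by $S_3$, consistent with the Introduction and with the $\mathbb{Z}_{36}$ example, where each $(0,\bar x)$ is adjacent to $(1,\bar x)$. With that reading, the perfect matching is immediate.

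Notice that $\Gamma_0$ need not be connected. Its connection set $S_1$ generates only the subgroup of order $pq$, so $\Gamma_0$ splits into cosets. This is why the argument routes every path through $\Gamma_1$ rather than $\Gamma_0$.
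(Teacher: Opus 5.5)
Your proposal is correct and follows essentially the same route as the paper: the paper also reduces everything to the connected subgraph $\Gamma_1$ and uses the matching edges between $(0,x)$ and $(1,x)$ to move off the $0$-side, doing a case split on the sides of the two endpoints that your ``replace each endpoint by its $1$-side copy'' step handles uniformly. You are also right that the third case of the printed adjacency rule should read $S_3$ rather than $S_1$; that reading is what makes the cross edges a perfect matching.
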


\begin{proof}
    Let $(i,x), (j,y) \in V(\Gamma)$.   
    \begin{enumerate}
        \item [$(i)$] if $i = j = 1$, then $(i,x)$ and $(j,y)$ lies on the connected subgraph $\Gamma_1$, hence there exists a path connecting them. 
        \item [$(ii)$] if $i = j = 0$, then there exists path $(i, x) - (1,x) - \dots - (1,y) - (j,y)$ connecting the two. The path from $(1,x)$ to $(1,y)$ exists because the $\Gamma_1$ subgraph is connected. 
        \item [$(iii)$] if $i \neq j$, without the loss of generality, let $i = 0$, then there exists path $(i,x) - (j,x) - \dots - (j,y)$ connecting the two. 
    \end{enumerate}
    From all possible cases, we can conclude that the Bi-Cayley graph $\Gamma$ is connected. 
\end{proof}

Since the Bi-Cayley graph $\Gamma$ is connected, we can determine its Eulerian property. See proposition below. 

\begin{proposition}
The Bi-Cayley graph $\Gamma$ is not Eulerian.
\end{proposition}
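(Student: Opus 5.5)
The plan is to apply Euler's classical criterion: a connected finite graph is Eulerian if and only if every vertex has even degree. The preceding theorem shows that $\Gamma$ is connected, so it is enough to exhibit a single vertex of odd degree. I will use the degree computation from the counting proposition above, which says $\Gamma$ is biregular. Every vertex on the $0$-side has degree $|S_1|+1 = p+q-1$. Every vertex on the $1$-side has degree $|S_2|+1 = p(p-1)+q(q-1)+1$.

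The key step is a parity check on the $1$-side degree. For any integer $m$, the product $m(m-1)$ of two consecutive integers is even. Hence $p(p-1)$ and $q(q-1)$ are both even, and $p(p-1)+q(q-1)+1$ is odd. So every vertex $(1,x)$ with $x \in G$ has odd degree, and there are $p^2q^2 \geq 36$ such vertices. In particular, not all degrees are even, so $\Gamma$ has no Eulerian circuit.

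As a remark, I would note that the $0$-side degree $p+q-1$ is even when one of the primes equals $2$ and odd when both are odd. This shows that the $0$-side alone does not settle the question in general, which is why the argument should rest on the $1$-side. I do not expect any real obstacle. The only point needing care is to argue from the $1$-side degree, whose parity does not depend on $p$ and $q$, rather than from the $0$-side degree.
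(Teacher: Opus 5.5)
Your proof is correct and matches the paper's argument: both observe that every $1$-side vertex has odd degree $p(p-1)+q(q-1)+1$ and conclude via the even-degree criterion (which the paper attributes to Dirac, though it is Euler's theorem, as you say). The direction you actually use, Eulerian $\Rightarrow$ all degrees even, holds without connectivity, so invoking the connectivity theorem is harmless but unnecessary.
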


\begin{proof}
Since each degree of the vertices in $\Gamma_1$ is $p(p-1)+q(q-1)+1$ and it yields odd value for any distinct prime $p,q$, then by Dirac's Theorem \cite{wilson}, we can conclude that the graph $\Gamma$ is not Eulerian. 
\end{proof}

The girth provides insight into the cyclic structure of the graph, specifically identifying the length of the shortest cycle contained within it. Before deriving the girth of the Bi-Cayley graph, we first present the following proposition, which will serve as a key component in the subsequent proof.  

\begin{proposition}
    Every cycle of length $3$ of the graph $\Gamma$ must lie entirely on the same side, either $\Gamma_0$ or $\Gamma_1$. 
\end{proposition}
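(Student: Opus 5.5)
Since every edge between the two sides is of the form $(0,x)(1,x)$, the cross edges form a perfect matching. My plan is to use this to rule out any triangle that meets both $\Gamma_0$ and $\Gamma_1$. Here $S_3=\{e_G\}$, so $(0,x)$ and $(1,y)$ are adjacent exactly when $xy^{-1}=e_G$, i.e.\ $y=x$. Consequently each vertex of $\Gamma$ has exactly one neighbour on the opposite side: $(0,x)$ has only $(1,x)$, and $(1,y)$ has only $(0,y)$.

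Suppose a cycle of length $3$ with vertices $u,v,w$ does not lie entirely in $\Gamma_0$ or $\Gamma_1$. By pigeonhole on the two sides, exactly two of its vertices lie on one side and one on the other. Without loss of generality, say $u,v$ lie on side $i$ and $w$ on side $1-i$. Write $u=(i,x)$, $v=(i,y)$ and $w=(1-i,z)$ with $x\neq y$, since $u\ne v$.

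Both edges $uw$ and $vw$ cross between the sides, so by the matching description $z=x$ and $z=y$. This gives $x=y$, a contradiction. Hence no triangle has vertices on both sides, and every $3$-cycle lies entirely within $\Gamma_0$ or within $\Gamma_1$.

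There is no real obstacle here. The only point needing care is reading the definition correctly: the third case should be read with $S_3$ (the displayed definition has a typo showing $S_1$), and then $xy^{-1}\in S_3=\{e_G\}$ forces $x=y$. Once that is fixed, the argument is the two-line pigeonhole-plus-matching contradiction above.
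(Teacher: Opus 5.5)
Your proof is correct and rests on the same fact as the paper's: with $S_3=\{e_G\}$ the cross edges $(0,g)(1,g)$ form a perfect matching, so a cycle that visits both sides cannot close up in three steps. Your 2--1 pigeonhole split, which would force the lone vertex to have two distinct cross-neighbours, is a cleaner and more rigorous version of the paper's informal claim that ``a cycle through the $1$-side needs at least $4$ steps''.
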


\begin{proof}
    Without the loss of generality, suppose the cycle starts on the $0$-side. Suppose that the cycle starts from vertex $(0,g) \in V(\Gamma)$, if the cycle goes through the $1$-side then the minimum step that the path goes back to $(0,g)$ is of length $4$. The path with minimum possible length is $(0,g) \to (1,g) \to (1,h) \to (0,h) \to (0,g)$, provided $(1,g)$ is adjacent to $(1,h)$ and $(0,g)$ is adjacent to $(0,h)$ for some $h \in G$ and $h \neq g$. Thus, there could not be any cycle of length $3$ jumping from one side to another. 
\end{proof}

We claim that there is a cycle of length $3$ and so the girth of the graph is indeed $3$. This cycle must lie on one side. We then conclude that the girth number of the entire Bi-Cayley structure is the minimum of the girth number between the two sides.

We may proceed to prove the claim above. Formally, it is stated in this following theorem.

\begin{theorem}
The girth number of $\Gamma$ is $3$.
\end{theorem}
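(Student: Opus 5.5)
Since a graph with at least one edge has girth at least $3$ by definition (no loops or multiple edges), it suffices to exhibit one cycle of length $3$ in $\Gamma$. By the preceding proposition, such a triangle must lie entirely in $\Gamma_0$ or entirely in $\Gamma_1$. So I will look for it inside one Cayley subgraph, and the natural choice is a cyclic subgroup whose nonidentity elements all lie in the connection set.

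The cleanest construction is in $\Gamma_1$. Take the element $b = a^{p^2}$, which has order $q^2$, so $b \in S_2$. Then $b^2 = a^{2p^2}$ also has order $q^2$, since $\gcd(2,q^2)=1$ when $q$ is odd; if $q=2$ I will swap the roles of $p$ and $q$ and use $a^{q^2}$ of order $p^2$ with $p$ odd. Since $p \neq q$, at least one of them is odd, so the construction is available. Consider the vertices $(1,e_G)$, $(1,b)$ and $(1,b^2)$; these are distinct because $|b| = q^2 \ge 9 > 2$. For adjacency we need
\[
e_G b^{-1} = b^{-1} \in S_2, \qquad b\,(b^2)^{-1} = b^{-1} \in S_2, \qquad e_G\,(b^2)^{-1} = b^{-2} \in S_2 .
\]
Each of these has order $q^2$ because inverses preserve order and $b^2$ has order $q^2$. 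Hence the three vertices form a triangle in $\Gamma_1$, and the girth of $\Gamma$ is $3$.

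Alternatively, when $p$ (or $q$) is at least $3$, the same argument works in $\Gamma_0$ with an element $c$ of prime order $p$: the vertices $(0,e_G)$, $(0,c)$, $(0,c^2)$ form a triangle, since $c^{\pm1}, c^{\pm2}$ all have order $p$. Mentioning this shows the triangle is not special to one side.

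The only delicate point is the prime $2$: for $p=2$, $S_1$'s order-$2$ part is a single involution and gives no triangle by itself. This is why I choose the odd prime explicitly, using $q^2 \ge 9$, so that $b$, $b^2$ and $e_G$ are distinct and all the relevant differences stay of order $q^2$.
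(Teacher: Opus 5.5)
Your proof is correct and takes essentially the same approach as the paper. The paper assumes $p<q$ (so $q\ge 3$) and exhibits the triangle $(0,g),(0,gs),(0,gt)$ in $\Gamma_0$ using two distinct nonidentity elements $s,t$ of the subgroup of order $q$, which is exactly your alternative construction; your primary triangle $(1,e_G),(1,b),(1,b^2)$ in $\Gamma_1$ is an equally valid variant on the other side. One small correction to your opening motivation: $\langle b\rangle$ is not contained in $S_2\cup\{e_G\}$, since it contains elements of order $q$, but your verification only uses $b^{\pm1}$ and $b^{\pm2}$, so nothing breaks.
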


\begin{proof}
We are given $|G|=p^2q^2$, without the loss of generality, suppose $p < q$ implying $q \geq 3$. Because $q$ is prime number that divides $|G|$, then there exists a unique subgroup $H$ of $G$ such that the order of $H$ is equal to $q$. It follows that $H$ is also a cyclic group and contains $q-1$ nonidentity element. These nonidentity element is of order $q$. Thus, there exists at least two different element $s,t \in H \setminus \{ e_G\}$ such that for every $g \in G$ there exists cycle of length $3$, i.e., 
    $(0, g) \to (0,gs) \to (0, gt) \to (0,g).$
Thus, the girth of $\Gamma$ is $3$. 
\end{proof}
%
%In this subsection, we investigate the cliques of the Bi-Cayley graph $\Gamma$, i.e., the complete subgraphs induced by the subset of the vertex set. Moreover, the size of the largest clique, known as the clique number, provides information related to other graph parameters such as the chromatic number. We proceed with the following remark.
%
\begin{remark}
		It has been observed that the edges connecting the subgraph $\Gamma_0$ and the subgraph $\Gamma_1$ are precisely the edges between vertices of the form $(0,g)$ and $(1,g)$ for each $g \in G$. Consequently, no vertex in $\Gamma_0$ is adjacent to more than one vertex in $\Gamma_1$, and vice versa. This implies that any clique of size greater than $2$, if any, must be contained entirely within a single part of the graph, either $\Gamma_0$ or $\Gamma_1$.
\end{remark}

Thus, it suffices to consider the clique number of each $\Gamma_0$ and $\Gamma_1$ to determine the clique number of the entire graph. Another observation to be made is that every vertex in the subgraph $\Gamma_1$ has the form $(1,g)$ for some $g \in G$. Within $\Gamma_1$, the first coordinate, i.e., the number $1$, plays no role in determining adjacency. Therefore, it is convenient to suppress this coordinate and denote each vertex $(1,g)$ simply by $g$. This analoguosly also works for the subgraph $\Gamma_0$. Thus, in the subgraph $\Gamma_0$, the vertex $(0,g)$ is simply written as $g$.

\begin{theorem}
The clique number of $\Gamma$ is $\max \{p,q \}$.
\end{theorem}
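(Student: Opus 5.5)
The plan is to use the preceding Remark, which says every clique of size greater than $2$ lies entirely in $\Gamma_0$ or entirely in $\Gamma_1$. Then $\omega(\Gamma)=\max\{2,\omega(\Gamma_0),\omega(\Gamma_1)\}$, so it suffices to show $\omega(\Gamma_0)=\omega(\Gamma_1)=\max\{p,q\}$, which is at least $3$. Since each side is a Cayley graph, translations are automorphisms. So any clique may be translated to contain the identity $e_G$; every other member of the clique is then itself in $S_1$ (respectively $S_2$). I will work additively in $G\cong\mathbb{Z}_{p^2}\times\mathbb{Z}_{q^2}$, writing vertices of each side simply as group elements, as the paper does.

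For $\Gamma_0$, I first give the lower bound. The unique subgroup $H_q$ of order $q$ has all its nonidentity elements of order $q$, and $H_q$ is closed under differences. Hence $H_q$ is a clique of size $q$; similarly $H_p$ is a clique of size $p$.

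For the upper bound in $\Gamma_0$, take a clique $C\ni 0$. If $C$ contained $x$ of order $p$ and $y$ of order $q$, then $x-y$ would have order $\mathrm{lcm}(p,q)=pq$. This is impossible, since $x-y\in S_1$. So $C\setminus\{0\}$ lies entirely in $H_p$ or entirely in $H_q$, giving $|C|\le\max\{p,q\}$.

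For $\Gamma_1$, I again take a clique $C\ni 0$, whose other elements have order $p^2$ or $q^2$. Suppose $x$ has order $p^2$ and $y$ has order $q^2$. Then $x=(x_p,0)$ and $y=(0,y_q)$ with $x_p$ of order $p^2$ and $y_q$ of order $q^2$, so $x-y$ has order $p^2q^2$, contradicting $x-y\in S_2$. Thus $C$ lies in the subgroup $K_p\cong\mathbb{Z}_{p^2}$ or in $K_q\cong\mathbb{Z}_{q^2}$; say $K_p$.

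Within $K_p$, two distinct elements are adjacent exactly when their difference has order $p^2$, that is, when the difference is not in the order-$p$ subgroup $pK_p$. Therefore distinct elements of $C$ must lie in distinct cosets of $pK_p$, which gives $|C|\le [K_p:pK_p]=p$. Conversely, the set $\{0,1,\dots,p-1\}\subseteq\mathbb{Z}_{p^2}$ has pairwise differences of absolute value between $1$ and $p-1$, all of order $p^2$, so it is a clique of size $p$. The same argument in $K_q$ gives $q$, so $\omega(\Gamma_1)=\max\{p,q\}$.

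I expect the main obstacle to be the $\Gamma_1$ upper bound, because elements of order $p^2$ do not form a subgroup. The key point is to reformulate adjacency inside $\mathbb{Z}_{p^2}$ as lying in different cosets of $pK_p$. The mixed-order exclusion via the order of $x-y$ should be routine in the $\mathbb{Z}_{p^2}\times\mathbb{Z}_{q^2}$ coordinates.
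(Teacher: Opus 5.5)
Your proof is correct and follows essentially the same route as the paper. Both arguments use the perfect-matching cross edges to confine cliques of size greater than $2$ to one side, then show a clique cannot mix $p$- and $q$-directions, so it lies in a copy of $\mathbb{Z}_{p^2}$ or $\mathbb{Z}_{q^2}$ (the paper's $Q_\alpha$, $P_\beta$), and finally count inside that cyclic group. Two of your steps are tighter than the paper's: translating the clique to contain $0$ shows the whole clique, not merely each pair, lies in one such copy, and your coset count justifies the maximality that the paper only asserts. Note also that the paper later writes $K_p$ for the subgroup of order $p$, the reverse of your naming.
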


\begin{proof}
We will determine the clique on each subgraphs. We see that on the $\Gamma_0$, any two vertices $g, h \in V(\Gamma_0)$ is adjacent if and only if the order $gh^{-1}$ is either $p$ or $q$. Now, we see the vertices as pair of element in $\mathbb{Z}_{p^2} \times \mathbb{Z}_{q^2}$. Notice that if $g$ and $h$ have a different $p$-component and $q$-component, the order of $gh^{-1}$ divides both $p$ and $q$, therefore the order of $gh^{-1}$ does not equal to $p$ or $q$. This means that the clique entirely lies on either $Q_\alpha := \{ (x_p, \alpha): x_p \in \mathbb{Z}_{p^2}\}$ or $P_\beta := \{ (\beta, x_q): x_q \in \mathbb{Z}_{q^2}\}.$
    %\begin{align*}
    %    Q_\alpha := \{ (x_p, \alpha): x_p \in \mathbb{Z}_{p^2}\} \text{ or } P_\beta := \{ (\beta, x_q): x_q \in \mathbb{Z}_{q^2}\}. 
    %\end{align*}
    
Suppose that the clique $T$ lies entirely on $Q_{\alpha}$. We can see $Q_\alpha$ as a subset of $\mathbb{Z}_{p^2}$. Let $\bar{k}^i, \bar{k}^j \in \mathbb{Z}_{p^2}$. It can be showed that $|\bar{k}^i(\bar{k}^j)^{-1}| = p$ occurs if and only if $p \mid i-j \text{ but } p^2 \nmid i - j.$
    %\begin{align*}
     %   |\bar{k}^i(\bar{k}^j)^{-1}| = p \iff p \mid i-j \text{ but } p^2 \nmid i - j. 
    %\end{align*}
Therefore, the biggest such clique set can be is, $T = \{ \bar{k}^{r}, \bar{k}^{r+p}, \bar{k}^{r+2p}, \dots , \bar{k}^{r+(p-1)p} \} \subseteq \mathbb{Z}_{p^2}.$ 
    %\begin{align*}
    %    T = \{ \bar{k}^{r}, \bar{k}^{r+p}, \bar{k}^{r+2p}, \dots , \bar{k}^{r+(p-1)p} \} \subseteq \mathbb{Z}_{p^2}.
    %\end{align*}
Hence, the clique number is $p$. On the case that the clique lies on the $P_\beta$ yields clique number $q$. Thus, the clique number of the $\Gamma_0$ is $\max\{p,q\}$.

Going by the same logic, we suppose that the clique $R$ on the $\Gamma_1$ lies entirely on $Q_\alpha$. We see that for any two vertices $\bar{k}^i, \bar{k}^j \in \mathbb{Z}_{p^2}$, $|\bar{k}^i (\bar{k}^j)^{-1}| = p^2 \iff p \nmid i-j.$
    %\begin{align*}
    %    |\bar{k}^i (\bar{k}^j)^{-1}| = p^2 \iff p \nmid i-j.
    %\end{align*}
So, the biggest such clique set is $R = \{ \bar{k}^0, \bar{k}, \bar{k}^2, \dots, \bar{k}^{p-1} \} \subseteq \mathbb{Z}_{p^2}.$
    %\begin{align*}
    %    R = \{ \bar{k}^0, \bar{k}, \bar{k}^2, \dots, \bar{k}^{p-1} \} \subseteq \mathbb{Z}_{p^2}.
    %\end{align*}
In the case that the clique of the $\Gamma_1$ lies on $P_\beta$, we have clique of size $q$. Thus, the clique number on the $\Gamma_1$ is the same as the $\Gamma_0$, that is, $\max \{p,q \} $.

Since $p,q \geq 2$ and no clique greater than $2$ lies on different side, we may conclude that the clique number of $\Gamma$ is indeed $\max \{p,q \}$.
\end{proof}

This result on the clique number also agrees with the former result on the girth number. For any primes $p$ and $q$, we have that $\omega(\Gamma) = \max \{p,q \} \geq 3$. So, the smallest clique is of size $3$, which is a cycle of length $3$. Thus, the girth number is also $3$, which we already proved.

It is already established that the lower bound of the chromatic number is $\omega(\Gamma) = \max \{p,q\}$. We now show that the chromatic number of $\Gamma$ is $\max \{p,q \}+1$.

We define two subset of $G$ that will be useful in determining the chromatic number of the $1$-side. Define $S_A = \{x \in G : |x|=p^2 \}$ and $S_B = \{ x \in G : |x| = q^2\}.$
%\begin{align*}
%    S_A = \{x \in G : |x|=p^2 \} \text{ and } S_B = \{ x \in G : |x| = q^2\}.
%\end{align*}
It is clear that the set $S_A$ and $S_B$ are inverse closed and do not contain the identity element of $G$. We will proceed to the proposition below.
\begin{proposition}
    The subgraph $\Gamma_1$ of $\Gamma$ is a cartesian product of two Cayley graph $A:= \mathrm{Cay}(\mathbb{Z}_{p^2}, S_A)$ and $B:= \mathrm{Cay}(\mathbb{Z}_{q^2}, S_B)$. 
\end{proposition}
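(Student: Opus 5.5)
Identify $G$ with $\mathbb{Z}_{p^2}\times\mathbb{Z}_{q^2}$ via the isomorphism $\bar k\mapsto(\bar k \bmod p^2,\bar k\bmod q^2)$ used throughout. We will also view $S_A$ and $S_B$ as subsets of $\mathbb{Z}_{p^2}$ and $\mathbb{Z}_{q^2}$, respectively. Under this identification, an element of $G$ of order $p^2$ has trivial $q$-component and a $p$-component of order $p^2$. Likewise, an element of order $q^2$ has trivial $p$-component and a $q$-component of order $q^2$. So $S_A$ corresponds to $\{(x,0): x\in\mathbb{Z}_{p^2},\ |x|=p^2\}$, and $S_B$ corresponds to $\{(0,y): y\in\mathbb{Z}_{q^2},\ |y|=q^2\}$.

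The plan is to exhibit the vertex bijection
\[
\varphi: V(\Gamma_1)\to V(A\,\square\,B),\qquad (1,g)\mapsto (g_p,g_q),
\]
and show that it preserves adjacency in both directions. First I will justify the order computation above. In the additive group $\mathbb{Z}_{p^2}\times\mathbb{Z}_{q^2}$, the order of $(u,v)$ is $\mathrm{lcm}(|u|,|v|)$, where $|u|$ divides $p^2$ and $|v|$ divides $q^2$. Hence $|(u,v)|=p^2$ holds exactly when $|u|=p^2$ and $v=0$. Symmetrically, $|(u,v)|=q^2$ holds exactly when $u=0$ and $|v|=q^2$. This gives $S_2=S_A\sqcup S_B$, with the two parts embedded in the two coordinate factors.

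Next I will check adjacency. Two vertices $(1,g)$ and $(1,h)$ are adjacent in $\Gamma_1$ iff $g-h\in S_2$. By the previous step this means one of two things. Either $g_q=h_q$ and $g_p-h_p$ has order $p^2$, i.e. $g_p\sim h_p$ in $A$ with equal second coordinates. Or $g_p=h_p$ and $g_q-h_q$ has order $q^2$, i.e. $g_q\sim h_q$ in $B$ with equal first coordinates. This is exactly the definition of adjacency in the Cartesian product $A\,\square\,B$, so $\varphi$ is a graph isomorphism.

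The only step needing care is the lcm argument: I must rule out mixed elements, with both components nonzero, of order $p^2$ or $q^2$. This follows because $\gcd(p,q)=1$, so the order of a mixed element is divisible by both $p$ and $q$. Everything else is routine bookkeeping with the identification.
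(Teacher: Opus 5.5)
Your proposal is correct and follows essentially the same route as the paper: identify $G$ with $\mathbb{Z}_{p^2}\times\mathbb{Z}_{q^2}$, note that $g-h$ has order $p^2$ or $q^2$ exactly when one coordinate agrees and the other coordinate's difference has the corresponding prime-square order, and recognize this as adjacency in the Cartesian product $A \square B$. Your lcm argument, which rules out elements with both components nonzero, supplies the justification that the paper's ``in other words'' step leaves implicit.
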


\begin{proof}
    Let $g = (g_p,g_q)$, $h =(h_p, h_q) \in G$. The vertices $g$ and $h$ are adjacent on the $\Gamma_1$ if and only if the order of $gh^{-1}$ is either $p^2$ or $q^2$. In other words, $g$ and $h$ will be adjacent to each other if either of these cases happens,  
    \begin{enumerate}
        \item [$(i)$] $|g_ph_p^{-1}| = p^2$ and $g_q = h_q$, or
        \item [$(ii)$] $|g_qh_q^{-1}| = q^2$ and $g_p = h_p$.
    \end{enumerate}
    This adjacency coincides with the adjacency of cartesian product of graph $A$ and graph $B$. Thus, the subgraph $\Gamma_1$ is exactly the cartesian product of $A$ and $B$. We may denote it by $A \square B$. 
\end{proof}

%\begin{theorem}
%    Let $G_1$ and $G_2$ be a graph. Then 
%    \begin{align*}
%        \chi(G_1 \square G_2) = \max \{\chi (G_1), \chi (G_2) \}.
%    \end{align*}
%\end{theorem}

It suffices to find the chromatic number of $A$ and $B$ to determine the chromatic number of the $\Gamma_1$. To do so, we first investigate the structure of the Cayley graph $A$ and $B$. 

\begin{proposition}
    The Cayley graph $A$ is isomorphic to complete $p$-partite graph $K_{p,p,\dots , p}$ and the Cayley graph $B$ is isomorphic to complete $q$-partite graph $K_{q,q, \dots , q}$. 
\end{proposition}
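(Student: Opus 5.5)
The plan is to identify the non-edges of $A$ explicitly and show they form an equivalence relation with $p$ classes of size $p$. Throughout, $\mathbb{Z}_{p^2}$ is written additively. The connection set $S_A$, viewed inside $\mathbb{Z}_{p^2}$ (the $p$-component of the elements of order $p^2$), is the set of generators of $\mathbb{Z}_{p^2}$. These are the classes $\bar{k}$ with $p \nmid k$, so $|S_A| = \phi(p^2) = p(p-1)$.

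By definition, two vertices $x,y \in \mathbb{Z}_{p^2}$ are adjacent in $A$ exactly when $x-y \in S_A$, that is, when $p \nmid x-y$. Equivalently, $x$ and $y$ are non-adjacent precisely when $x \equiv y \pmod p$. This is the same computation already used in the clique-number proof for $\Gamma_1$, where $|\bar{k}^i(\bar{k}^j)^{-1}| = p^2 \iff p \nmid i-j$.

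Next, consider the cosets $V_r = r + \langle \bar{p} \rangle = \{ r, r+p, \dots, r+(p-1)p\}$ for $r = 0,1,\dots,p-1$. These are the cosets of the unique subgroup of order $p$ in $\mathbb{Z}_{p^2}$, so they partition the vertex set into $p$ classes, each of size $p$.

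Within a single class, any two distinct elements differ by a nonzero multiple of $p$. Such a difference is not in $S_A$, so each $V_r$ is an independent set. Between classes, elements of $V_r$ and $V_s$ with $r \neq s$ differ by something congruent to $r-s \not\equiv 0 \pmod p$. That difference is a generator, so every cross pair is an edge. Hence $A \cong K_{p,p,\dots,p}$ with $p$ parts. As a consistency check, the degree is $p^2 - p = |S_A|$.

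The same argument with $q$ in place of $p$, using the cosets of $\langle \bar{q} \rangle$ in $\mathbb{Z}_{q^2}$, gives $B \cong K_{q,\dots,q}$.

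The only point needing care is the identification of $S_A$ inside $\mathbb{Z}_{p^2}$. An element of $G$ of order $p^2$ has $q$-component equal to $0$, and its $p$-component has order $p^2$. This follows from the componentwise order formula $|(g_p,g_q)| = \mathrm{lcm}(|g_p|,|g_q|)$. With that settled, the rest is direct.
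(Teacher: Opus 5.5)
Your proof is correct and follows the paper's argument. In both, adjacency in $A$ reduces to $p \nmid x-y$, the residue classes modulo $p$ (your cosets $V_r$ of the order-$p$ subgroup, the paper's $A_i$) serve as $p$ independent parts of size $p$ with every cross pair adjacent, and $B$ is handled symmetrically. Your explicit check that $S_A$ corresponds to the generators of $\mathbb{Z}_{p^2}$, using $|(g_p,g_q)| = \mathrm{lcm}(|g_p|,|g_q|)$, is a detail the paper leaves implicit.
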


\begin{proof}
    We will prove the case of Cayley graph $A = \mathrm{Cay}(\mathbb{Z}_{p^2}, S_A)$. Two vertices $\bar{k}^i, \bar{k}^j \in V(A)$ is adjacent if and only if $|\bar{k}^{i-j}| = p^2$ or equivalently $i \not\equiv j \text{ (mod }p)$. %The vertices of $A$ can be written as an entry of a matrix, 
        %\begin{align*}
        %   \begin{bmatrix}
        %       \bar{k}^{0} & \bar{k}^{p} & \bar{k}^{2p} &\cdots & \bar{k}^{(p-1)p} \\
        %       \bar{k}^{1} & \bar{k}^{p+1} & \bar{k}^{2p+1} &\cdots & \bar{k}^{(p-1)p+1} \\
        %       \vdots & \vdots & & \ddots & \vdots \\
        %       \bar{k}^{p-1} & \bar{k}^{2p-1} & \cdots &\cdots & \bar{k}^{(p-1)p} \\
        %   \end{bmatrix} ,
        %\end{align*}
    %in which each row represents the same residue class modulo $p$. Each vertices on a same residue class are not adjacent. 

    Let $A_i = \{ \bar{k}^n : n \equiv i \text{ (mod }p) \}$. Then, $V(A) = \displaystyle\bigcup_{i=0}^{p-1} A_i$ where $A_i \cap A_j = \emptyset$ for $i \neq j$. Let $\bar{k}^n, \bar{k}^{n'} \in A_i$, we see that $\bar{k}^n \bar{k}^{n'} \notin E(A)$. On the other hand, let $\bar{k}^{m_i} \in A_i$ and $\bar{k}^{m_j} \in A_j$ where $i \neq j$, it follows that $\bar{k}^{m_i}\bar{k}^{m_j} \in E(A)$. This is precisely the definition of complete $p$-partite graph denoted by $K_{p,p,\dots , p}$. Analoguously, we also obtain that $B$ is a complete $q$-partite graph $K_{q,q,\dots , q}$. 
\end{proof}

\begin{proposition}
    The chromatic number of $\Gamma_1$ is $\mathrm{max}\{p,q \}$.
\end{proposition}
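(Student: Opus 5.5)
We prove $\chi(\Gamma_1)=\max\{p,q\}$ by matching a lower bound and an upper bound. Write $\Gamma_1 = A \square B$, as in the proposition above, with $A\cong K_{p,\dots,p}$ and $B\cong K_{q,\dots,q}$. We argue directly rather than quoting the general identity $\chi(G_1\square G_2)=\max\{\chi(G_1),\chi(G_2)\}$, although that identity is the underlying reason.

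\emph{Lower bound.} The proof of the clique number theorem exhibits a clique of size $\max\{p,q\}$ inside $\Gamma_1$. For example, if $q\ge p$, take $\{(\beta,x_q): x_q\in\{0,1,\dots,q-1\}\}$, a copy of one transversal of the parts of $B$. Hence $\chi(\Gamma_1)\ge \omega(\Gamma_1)=\max\{p,q\}$. Equivalently, $\Gamma_1$ contains copies of $A$ and $B$ as subgraphs, and $\chi(K_{p,\dots,p})=p$, $\chi(K_{q,\dots,q})=q$.

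\emph{Upper bound.} Let $m=\max\{p,q\}$ and use colours $\mathbb{Z}_m$. By the proposition above, $A$ has a proper $p$-colouring $c_A$, namely $c_A(\bar k^n)=n \bmod p$, which is constant on each part $A_i$. Similarly $B$ has a proper $q$-colouring $c_B(x)=x \bmod q$. View $c_A$ and $c_B$ as maps into $\mathbb{Z}_m$ via $\{0,\dots,p-1\}\subseteq\{0,\dots,m-1\}$, and similarly for $q$; distinct residues remain distinct in $\mathbb{Z}_m$. Define
\[
c(g_p,g_q)=c_A(g_p)+c_B(g_q) \pmod m .
\]

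Now check that $c$ is proper. Take adjacent vertices $g,h$ of $\Gamma_1$.
\begin{enumerate}
\item[$(i)$] In this case $g_q=h_q$ and $g_p,h_p$ are adjacent in $A$. Then $c(g)-c(h)=c_A(g_p)-c_A(h_p)$, which is nonzero in $\mathbb{Z}_m$ because $c_A(g_p)\neq c_A(h_p)$ and both values lie in $\{0,\dots,p-1\}$ with $p\le m$.
\item[$(ii)$] This case is symmetric, with $g_p=h_p$ and $g_q,h_q$ adjacent in $B$.
\end{enumerate}
Thus $c$ is a proper $m$-colouring, so $\chi(\Gamma_1)\le m$.

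The one step needing care is the reduction modulo $m$ in the upper bound. Subtracting two distinct colours from $\{0,\dots,p-1\}$ gives a difference of absolute value less than $p\le m$, so it cannot vanish mod $m$. Combining the two bounds gives $\chi(\Gamma_1)=\max\{p,q\}$.
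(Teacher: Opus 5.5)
Your proof is correct and follows the paper's route: identify $\Gamma_1$ with $A\square B$, colour $(g_p,g_q)$ by $c_A(g_p)+c_B(g_q)$ reduced modulo $\max\{p,q\}$, and take the lower bound from a clique of size $\max\{p,q\}$ (which the paper leaves implicit). Your reduction of $c_A$ modulo $p$ and $c_B$ modulo $q$ before summing is the right choice and repairs a detail in the paper, which reduces both factor colourings modulo $k=\max\{p,q\}$; read literally, that is improper on the factor with the smaller prime (for $p=3$, $q=2$, the adjacent vertices $\bar{k}^0$ and $\bar{k}^3$ of $B$ both get colour $0$).
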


%\begin{proof}
%    It is clear that $\chi (K_{p,p,\dots , p})= p$ and $\chi (K_{q,q,\dots , q})= q$. Thus, the chromatic number of the $\Gamma_1$ is the maximum of the chromatic number of $A$ and $B$ which is equal to $\max \{ p,q \}$.
%\end{proof}
  
\vspace{0.1cm}

Now, let us define the proper coloring of $\Gamma_1$. First of all, we define proper coloring on the Cayley graphs $A$ and $B$. We define $c_A : V(A) \rightarrow \{0, 1, 2, \dots , k-1 \}$ and $c_B : V(B) \rightarrow \{0, 1, 2, \dots , k-1 \}$ with $k = \mathrm{max}\{p,q \}$ and 
    \begin{align*}
        c_A(\bar{k}^i) \equiv i \text{ (mod }k), \hspace{0.2cm } \text{ and} \hspace{0.2cm} c_B(\bar{k}^j) \equiv j \text{ (mod }k)
    \end{align*}
for every $\bar{k}^i \in V(A)$ and $\bar{k}^j \in V(B)$. This is a proper coloring of $A$ and $B$ because each residue class are assigned different colors. Next, we can define coloring $col : V(A \square B) \rightarrow \{ 0,1,2, \dots , k-1 \}$ as 
    \begin{align*}
        col((\bar{k}^i , \bar{k}^j)) \equiv (c_A(\bar{k^i}) + c_B(\bar{k}^j)) \text{ (mod } k)
    \end{align*}
for every $(\bar{k}^i, \bar{k}^j) \in V(A \square B)$. This is a proper coloring of $\Gamma_1$ that only utilizes $k = \mathrm{max} \{p,q \}$ colors. 

\begin{proposition}
    The chromatic number of $\Gamma_0$ is $\mathrm{max}\{p,q \}$.
\end{proposition}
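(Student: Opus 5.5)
The plan is to imitate the argument just given for $\Gamma_1$: first identify $\Gamma_0$ as a cartesian product of two small Cayley graphs, then write down an explicit colouring with $k=\max\{p,q\}$ colours. The lower bound $\chi(\Gamma_0)\ge \max\{p,q\}$ I take directly from the proof of the clique number theorem, which exhibits a clique of size $\max\{p,q\}$ inside $\Gamma_0$ (a coset of the subgroup of order $\max\{p,q\}$).

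\textbf{Step 1: product decomposition.} Work in $\mathbb{Z}_{p^2}\times\mathbb{Z}_{q^2}$ and take $g=(g_p,g_q)$, $h=(h_p,h_q)$. The order of $gh^{-1}$ is the lcm of the orders of its two components. Hence $|gh^{-1}|=p$ exactly when $g_q=h_q$ and $g_p-h_p$ is a nonzero multiple of $p$. Symmetrically, $|gh^{-1}|=q$ exactly when $g_p=h_p$ and $g_q-h_q$ is a nonzero multiple of $q$. So, as in the proposition for $\Gamma_1$, we get
\[
\Gamma_0 \cong C\,\square\, D,
\]
where $C=\mathrm{Cay}(\mathbb{Z}_{p^2},\{x:|x|=p\})$ and $D=\mathrm{Cay}(\mathbb{Z}_{q^2},\{x:|x|=q\})$.

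\textbf{Step 2: structure of the factors.} In $C$, two vertices are adjacent iff they lie in the same coset of the unique subgroup $\langle p\rangle$ of order $p$ and are distinct. So $C$ is a disjoint union of $p$ copies of $K_p$, one copy for each coset $r+\langle p\rangle$ with $0\le r<p$. Likewise $D$ is a disjoint union of $q$ copies of $K_q$.

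\textbf{Step 3: explicit colouring.} Write each $x_p\in\mathbb{Z}_{p^2}$ uniquely as $x_p=r+ps$ with $0\le r,s<p$, and each $x_q\in\mathbb{Z}_{q^2}$ as $x_q=r'+qt$ with $0\le r',t<q$. Define
\[
col((x_p,x_q))\equiv s+t \pmod k .
\]
To check properness, take two vertices adjacent in $\Gamma_0$; by Step 1 they differ in exactly one coordinate.

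If they differ in the $p$-coordinate, they share $r$ (same coset) and $t$, and have $s\neq s'$ with $0<|s-s'|<p\le k$. Therefore $s+t\not\equiv s'+t \pmod k$. The $q$-coordinate case is symmetric, using $|t-t'|<q\le k$.

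So $col$ is a proper colouring with $k$ colours, and combined with the clique lower bound this gives $\chi(\Gamma_0)=\max\{p,q\}$.

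\textbf{Main obstacle.} The only delicate point is Step 1. One must verify that no pair with both components different can be adjacent, since then the order of $gh^{-1}$ is divisible by both $p$ and $q$. One must also verify that the component condition is exactly "same coset, distinct". The rest is routine. Alternatively, one could invoke $\chi(C\square D)=\max\{\chi(C),\chi(D)\}$ with $\chi(C)=p$ and $\chi(D)=q$, but the explicit colouring avoids citing that result.
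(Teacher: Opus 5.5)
Your proof is correct, but it takes a different route from the paper's. The paper does not decompose $\Gamma_0$ here. Assuming $p>q$, it writes down one colouring directly on $\mathbb{Z}_{p^2q^2}$, namely $c_0(g)\equiv\lfloor g/(pq^2)\rfloor \pmod p$, which cuts the group into $p$ consecutive blocks of length $pq^2$. It then simply asserts that this colouring is proper; the lower bound is left implicit from the clique number theorem, just as you use it. Verifying the paper's colouring needs a small computation. A neighbour $g+jpq^2$ with $1\le j\le p-1$ shifts the block index by $j\not\equiv 0 \pmod p$. A neighbour $g+jp^2q$ with $1\le j\le q-1$ shifts it by $\lfloor jp/q\rfloor$ or $\lfloor jp/q\rfloor+1$, and both values lie in $\{1,\dots,p-1\}$ only because $p>q$. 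So the normalisation $p>q$ is essential for that formula: with $p=2$, $q=3$, the adjacent vertices $0$ and $12$ of $\mathbb{Z}_{36}$ would both receive colour $0$. Your route goes via $\Gamma_0\cong C\,\square\,D$, with $C$ a disjoint union of $p$ copies of $K_p$ and $D$ a disjoint union of $q$ copies of $K_q$, and uses the additive colouring $s+t \pmod k$. This is the same template the paper uses for $\Gamma_1$, and it is the standard argument behind $\chi(G_1\square G_2)=\max\{\chi(G_1),\chi(G_2)\}$. It is symmetric in $p$ and $q$ and reduces properness to a two-line check. It also already exhibits $\Gamma_0$ as a disjoint union of $pq$ copies of $K_p\square K_q$, which the paper only states later. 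The paper's formula, in exchange, stays in the original cyclic coordinates and avoids the product structure altogether.
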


\begin{proof}
    Without the loss of generality, suppose that $p>q$. Define the coloring $c_0 : V(\Gamma_0) \rightarrow \{0, 1, \dots , p-1\}$ by 
        \begin{align*}
            c_0(g) \equiv \left\lfloor \dfrac{g}{pq^2} \right\rfloor \text{ (mod }p)
        \end{align*}
    for all $g \in V(\Gamma_0)$. It can be shown that $c_0$ is a proper $p$-coloring of $\Gamma_0$.
\end{proof}

\begin{definition}
    Let $G$ be a cyclic group of order $p^2q^2$. We use the following notations regarding the subgroup of $G$. Define $K_p$, $K_q$, $H_p$, and $H_q$ to be the unique subgroups of order $p$, $q$, $p^2$, and $q^2$, respectively.
\end{definition}

For the remainder of the discussion concerning the chromatic number of the Bi-Cayley graph over cyclic groups, we assume that $p > q$, hence $\max \{p,q\}=p$. The argument is symmetric in the case that $p < q$. 

\begin{proposition}
    Each left coset of $K_p$ has size $p$ and forms a clique in $\Gamma_0$
\end{proposition}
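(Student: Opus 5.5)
The claim has two parts: each left coset $gK_p$ has exactly $p$ elements, and any two distinct elements of $gK_p$ are adjacent in $\Gamma_0$. Both follow directly from the definitions, using adjacency in the $0$-side subgraph: $x \sim y$ in $\Gamma_0$ if and only if $xy^{-1} \in S_1$.

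The size statement is immediate. Left multiplication $k \mapsto gk$ is a bijection from $K_p$ onto $gK_p$, so $|gK_p| = |K_p| = p$. Since $G$ is cyclic, hence abelian, left and right cosets coincide, and the choice of side causes no trouble.

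For the clique statement, take two distinct elements $x = gk_1$ and $y = gk_2$ of $gK_p$, with $k_1, k_2 \in K_p$ and $k_1 \neq k_2$. Then
\[
xy^{-1} = g k_1 k_2^{-1} g^{-1} = k_1 k_2^{-1},
\]
using commutativity of $G$. In a noncommutative group one would get the conjugate instead, but that still lies in a conjugate of $K_p$ and has order $p$, so the argument does not depend on commutativity. The element $k_1k_2^{-1}$ is a nonidentity element of $K_p$. Since $K_p$ has prime order $p$, Lagrange's theorem gives $|k_1k_2^{-1}| = p$. Hence $k_1k_2^{-1} \in S_1 = \{x \in G : |x| = p \text{ or } q\}$, so $x$ and $y$ are adjacent in $\Gamma_0$. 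Therefore $gK_p$ induces a complete subgraph $K_p$.

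There is no genuine obstacle; the only point needing care is the order computation. Every nonidentity element of a group of prime order $p$ has order exactly $p$, which puts it in $S_1$. This also matches the earlier clique number theorem: the cosets of $K_p$ realize the maximum cliques of size $p = \max\{p,q\}$ in $\Gamma_0$.
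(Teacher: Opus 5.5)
Your proof is correct and takes essentially the same route as the paper: both compute $gh_1(gh_2)^{-1} = h_1h_2^{-1} \in K_p \setminus \{e_G\}$, observe that this element has order $p$ and so lies in $S_1$, and conclude that the two vertices are adjacent in $\Gamma_0$. You also justify $|gK_p| = p$ through the bijection $k \mapsto gk$ and state explicitly where commutativity is used; the paper leaves both points implicit.
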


\begin{proof}
    Fix a coset $C = gK_p = \{gh : h \in K_p \}$. If $gh_1,gh_2 \in C$ with $h_1 \neq h_2$, then $gh_1(gh_2)^{-1} = h_1h_2^{-1} \in K_p \setminus \{e_G \}$. Thus $|gh_1(gh_2)^{-1}|=p$. By the adjacency rule in $\Gamma_0$, $(0,gh_1)$ is adjacent to $(0,gh_2)$. Therefore, all pairs in $C$ are adjacent, meaning that $C$ is a clique of size $p$.
\end{proof}

There are $pq^2$ distinct $K_p$ cosets, each of size $p$. Next, we present the characteristics on how colors appear on $K_p$ cosets in any proper $p$-coloring of $\Gamma_0$

\begin{proposition}
    If $c_0$ is a proper $p$-coloring of $\Gamma_0$, then on each $K_p$ coset, the $p$ vertices receive all $p$ distinct colors meaning each color occurs exactly once in each coset.
\end{proposition}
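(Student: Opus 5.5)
This follows directly from the previous proposition, which shows that every left coset of $K_p$ is a clique of size $p$ in $\Gamma_0$. The plan is a pigeonhole argument on a single coset.

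Fix an arbitrary coset $C = gK_p$ and a proper $p$-coloring $c_0 : V(\Gamma_0) \to \{0,1,\dots,p-1\}$. Because $C$ is a clique, any two distinct vertices of $C$ are adjacent. Properness of $c_0$ therefore forces $c_0$ to take pairwise distinct values on $C$, so the restriction $c_0|_C$ is injective.

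Now $c_0|_C$ is an injective map from a set of size $|C| = p$ into a color set of size $p$. An injection between finite sets of equal size is a bijection. Hence every one of the $p$ colors appears on $C$, and, by injectivity, exactly once.

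Since $C$ was an arbitrary $K_p$-coset, the conclusion holds for all $pq^2$ cosets. There is no real obstacle here: the only input needed is the previous proposition, namely that $|C| = p$ and that $C$ is a clique. That proposition itself only uses the fact that $h_1h_2^{-1} \in K_p \setminus \{e_G\}$ has order exactly $p$, because $K_p$ has prime order. It also relies on the standing assumption $p > q$, so that $p = \max\{p,q\}$ is the number of available colors. If $p < q$, the symmetric statement with $K_q$ in place of $K_p$ holds by the same argument.
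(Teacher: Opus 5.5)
Your argument is correct and is exactly the intended one: the preceding proposition makes each $K_p$-coset a clique of size $p$ in $\Gamma_0$, so a proper $p$-coloring is injective on it and hence bijective onto the $p$ colors. The paper states this proposition without writing out a proof, and this is the only reasoning it needs.

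One small correction to your last remark: the assumption $p>q$ is not actually used. The hypothesis already fixes the number of colors at $p$, and the clique property of $K_p$-cosets holds for either ordering of the primes. When $p<q$, the statement is simply vacuous, because $\omega(\Gamma_0)=q>p$ and so no proper $p$-coloring of $\Gamma_0$ exists.
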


\begin{theorem}
    The Bi-Cayley graph $\Gamma$ is not $\max\{p,q\}$-colorable.
\end{theorem}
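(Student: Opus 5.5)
The plan is to argue by contradiction, combining the preceding proposition on $\Gamma_0$ with the structure of $\Gamma_1$ as the Cartesian product $A\square B$. As in the surrounding discussion, assume $p>q$, so $\max\{p,q\}=p$; the case $p<q$ is symmetric, with $K_q$, $B$ and $q$ in place of $K_p$, $A$ and $p$. Suppose $c$ is a proper $p$-coloring of $\Gamma$. I will show that $c$ must be constant on each $K_p$-coset on the $1$-side and rainbow on the same coset on the $0$-side. The perfect matching $(0,g)(1,g)$ then yields a monochromatic edge, which is the contradiction.

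\textbf{Step 1 (the $0$-side).} The restriction of $c$ to $\Gamma_0$ is a proper $p$-coloring. By the preceding proposition, on every coset $gK_p$ the $p$ vertices $(0,gh)$, $h\in K_p$, receive all $p$ colors, each exactly once.

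\textbf{Step 2 (the $1$-side).} Under $G\cong\mathbb{Z}_{p^2}\times\mathbb{Z}_{q^2}$ we have $K_p=\{(x,0): p\mid x\}$. Hence a coset $gK_p$ with $g=(g_p,g_q)$ is exactly the set of pairs whose $q$-component equals $g_q$ and whose $p$-component lies in the residue class of $g_p$ modulo $p$.

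Fix $g_q$. The vertices $\{(1,(x,g_q)) : x\in\mathbb{Z}_{p^2}\}$ span a copy of $A$ inside $\Gamma_1=A\square B$. By the earlier proposition, $A\cong K_{p,\dots,p}$, with parts the residue classes modulo $p$, and these parts are precisely the $K_p$-cosets with $q$-component $g_q$.

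Now restrict $c$ to this copy of $A$. Any two vertices in different parts are adjacent, so each color class lies inside a single part. There are $p$ parts and only $p$ colors, and every part is nonempty. Therefore the colors of distinct parts are distinct and each part is monochromatic. Consequently, for every coset $gK_p$, all vertices $(1,gh)$ with $h\in K_p$ receive one common color, say $\gamma$.

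\textbf{Step 3 (contradiction).} By Step 1, some $h\in K_p$ satisfies $c((0,gh))=\gamma$. Since $e_G\in S_3$, the vertex $(0,gh)$ is adjacent to $(1,gh)$, and by Step 2 that vertex also has color $\gamma$. This contradicts properness, so $\Gamma$ is not $\max\{p,q\}$-colorable.

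The main obstacle is Step 2. It requires identifying the $K_p$-cosets exactly with the parts of the copies of $A$ inside $\Gamma_1$, and proving the rigidity claim that a proper $p$-coloring of $K_{p,\dots,p}$ with $p$ parts is forced to be constant on each part. Steps 1 and 3 are then immediate from the cited proposition and the definition of $S_3$.
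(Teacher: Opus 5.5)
Your proof is correct, but it takes a different route from the paper's.

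The paper argues by counting:
\begin{enumerate}
\item The rainbow property of $K_p$-cosets gives exactly $pq^2$ elements in each $c_0$-color class $A_t$.
\item Averaging over the $p-1$ admissible values of $c_1$ yields some $s\neq t$ with at least $q^2+1$ elements satisfying $c_0(g)=t$ and $c_1(g)=s$.
\item Pigeonholing these into the $q^2$ cosets of $H_p$ produces distinct $x,y$ in one $H_p$-coset with $c_0(x)=c_0(y)$ and $c_1(x)=c_1(y)$.
\item Finally, $xy^{-1}\in H_p\setminus\{e_G\}$ has order $p$ or $p^2$. Order $p$ contradicts properness of $c_0$, and order $p^2$ makes $(1,x)(1,y)$ a monochromatic edge.
\end{enumerate}

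You replace all of this counting with a rigidity fact on the $1$-side: a proper $p$-coloring of a complete $p$-partite graph is constant on each part. Combined with identifying those parts with the $K_p$-cosets, each $K_p$-coset is monochromatic on the $1$-side and rainbow on the $0$-side, and a single matching edge gives the contradiction.

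Both arguments in fact live inside one $H_p$-coset; your fiber $\{(x,g_q): x\in\mathbb{Z}_{p^2}\}$ is one. There $\Gamma_0$ induces $p$ disjoint cliques of size $p$, and $\Gamma_1$ induces the complementary complete $p$-partite graph.

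What each approach buys:
\begin{itemize}
\item Your version is shorter and pinpoints the obstruction exactly.
\item The paper's version needs only the weaker fact that two distinct elements of an $H_p$-coset are adjacent on one of the two sides. It therefore depends less on the precise multipartite structure of $\Gamma_1$.
\end{itemize}

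Its averaging step is actually unnecessary. On a single $H_p$-coset, the map $g\mapsto(c_0(g),c_1(g))$ would have to inject $p^2$ elements into the $p(p-1)$ pairs $(t,s)$ with $t\neq s$, which is impossible.
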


\begin{proof}
    Let $\max\{p,q\}=p$. For the sake of contradiction, assume that there exists proper $p$-colorings $c_0, c_1$ of $\Gamma_0$ and $\Gamma_1$, respectively, such that $c_0(g) \neq c_1(g)$ for all $g \in G$. This condition is required as $(0,g)$ is adjacent to $(1,g)$ for all $g \in G$. 

    Fix a color $t \in \{0, 1, \dots, p-1\}$ for $c_0$. There are exactly $pq^2$ elements $g$ with $c_0(g) = t$. It follows that none of those elements can have $c_1(g)=t$. So, these $pq^2$ elements are distributed among the remaining $p-1$ colors under $c_1$. 

    Let $A_t = \{ g \in G : c_0(g) = t \}$. It is already established that $|A_t|=pq^2$. For each $s \in \{0,1,\dots, p-1 \}$, define the non-negative integer $n_{t,s} = |\{ g \in G : c_0(g)=t \text{ and } c_1(g) = s\}|.$
        %\begin{align*}
        %    n_{t,s} = |\{ g \in G : c_0(g)=t \text{ and } c_1(g) = s\}|.
        %\end{align*}
    Because every $g \in A_t$ has some $c_1(g)$, we have
        \begin{align*}
            \sum_{s=0}^{p-1}n_{t,s} = |A_t|=pq^2. 
        \end{align*}
    But, by the constraint $c_1(g) \neq c_0(g)$, we have $n_{t,t}=0$. So,
        \begin{align}
            \sum_{s=0, s\neq t}^{p-1}n_{t,s} = pq^2. \tag{*}
        \end{align}
    There are $p-1$ terms on the left hand side of $(*)$, and thus the average of these $p-1$ integers are $\dfrac{pq^2}{p-1}$. Therefore, at least one of the $n_{t,s}$ with $s\neq t$ is greater than the average. This implies that there exists $s \neq t$ such that 
        \begin{align*}
            n_{t,s} \geq \left\lceil \dfrac{pq^2}{p-1}. \right\rceil \geq q^2 + 1.
        \end{align*}
    Thus, for each $t$, there exists some $s \neq t$ with at least $q^2 + 1 $ elements satisfying $c_0(g) = t$ and $c_1(g) = s$. We call such pair $(t,s)$ a \textit{coloring pair} for $t$. 

    Now, partition $G$ into $q^2$ distinct cosets of $H_p$ in which each coset has size $q^2$. Take any coloring pair $(t,s)$. There are at least $q^2 + 1$ elements with $(c_0(g), c_1(g))=(t,s)$. These $q^2 + 1$ elements are placed into $q^2$ different $H_p$ cosets. By the \textit{Pigeonhole Principle}, there must be an $H_p$ coset that contains at least two of them. Therefore, there exist distinct elements $x, y $ lying in the same $H_p$ coset such that $c_0(x)=c_0(y) = t$ and $c_1(x)=c_1(y) = s$. 

    Since $x$ and $y$ lie in the same $H_p$ coset and $c_0$ is a proper coloring, we have that $|xy^{-1}|=p^2$. This implies that $(1,x)$ is adjacent to $(1,y)$. But we already have that $c_1(x) = c_1(y) = s$. That contradicts the assumption that $c_1$ is a proper coloring of $\Gamma_1$. Therefore, no such pair of colorings $c_0, c_1$ can exist. Hence the whole Bi-Cayley graph is not $p$-colorable, i.e., not $\max\{p,q \}$-colorable.
\end{proof}

\begin{theorem}
    The chromatic number of Bi-Cayley graph $\Gamma$ is $\max\{p,q\}+1$.
\end{theorem}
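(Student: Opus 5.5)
The plan is to prove the two inequalities separately. The lower bound $\chi(\Gamma) \geq \max\{p,q\}+1$ comes straight from the preceding theorem, which says $\Gamma$ is not $\max\{p,q\}$-colorable. So all the work lies in the upper bound: we will exhibit a proper coloring of $\Gamma$ with $k+1$ colors, where $k=\max\{p,q\}$ (assume $p>q$ as in the surrounding discussion, so $k=p$).

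For the upper bound, first fix a proper $k$-coloring $c_1$ of $\Gamma_1$. We can take the coloring $col$ built on $A\square B$ above, using the decomposition $\Gamma_1 \cong A \square B$ and the complete multipartite structure of $A$ and $B$. Next take any proper $k$-coloring $c_0$ of $\Gamma_0$, which exists by the proposition on $\chi(\Gamma_0)$. The two colorings may clash only on the matching edges $(0,g)(1,g)$, since by the earlier remark these are the only edges between the sides.

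To repair the clashes, introduce one new color $k$. Define $c(1,g)=c_1(g)$ for every $g$. On the $0$-side set $c(0,g)=c_0(g)$ when $c_0(g)\neq c_1(g)$, and $c(0,g)=k$ otherwise. The edges inside $\Gamma_1$ remain properly colored. Each cross edge $(0,g)(1,g)$ is now properly colored, since either the original colors already differed or $(0,g)$ carries the new color $k$, which is never used on the $1$-side.

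The main obstacle is the edges inside $\Gamma_0$: two adjacent recolored vertices $(0,g)$ and $(0,h)$ would both receive color $k$. My first choice is to avoid this by choosing $c_0$ cleverly rather than arbitrarily. Take $c_0(g)=c_1(g)+1 \pmod k$ whenever the map $g\mapsto c_1(g)+1$ is a proper coloring of $\Gamma_0$. In that case no vertex is recolored at all, but this cannot happen, since $\Gamma$ is not $k$-colorable. So instead the plan is to argue that the set $X=\{g : c_0(g)=c_1(g)\}$ is independent in $\Gamma_0$ for a suitable choice. One way to arrange this is to choose $c_0$ and $c_1$ so that $X$ lies inside a single $K_p$-coset class pattern, using that each $K_p$-coset is a clique receiving all $p$ colors. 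The fallback, which avoids independence issues entirely, is to recolor the other side. On the $1$-side, replace $c_1(g)$ by $k$ exactly when it equals $c_0(g)$. Any two such vertices $g,h$ with $gh^{-1}$ of order $p^2$ or $q^2$ must be checked, and if adjacency among them persists we adjust by shifting $c_1$ by a constant modulo $k$ (still proper) and choose a shift minimizing conflicts. I expect this independence verification to be the hardest step, and I would first try to handle it explicitly with the formulas for $c_0$ and $col$ in components $(g_p,g_q)$.
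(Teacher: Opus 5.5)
You have the same lower bound and the same overall scheme as the paper: put $col$ on the $1$-side, take any proper $k$-coloring $c_0$ of $\Gamma_0$, and give the new color $k$ to each $(0,g)$ with $c_0(g)=col(g)$. You are right that the only non-obvious point is that no two adjacent vertices of $\Gamma_0$ both receive the new color; the paper just calls the coloring ``trivially'' proper. But your proposal never proves this, so the upper bound $\chi(\Gamma)\le k+1$ is not established. None of your remedies closes the gap:
\begin{itemize}
\item ``Choosing $c_0$ cleverly'' is never specified.
\item $X$ cannot usefully ``lie inside'' a $K_p$-coset pattern. Each $K_p$-coset is a clique of $\Gamma_0$, so an independent $X$ meets it in at most one vertex.
\item Shifting $c_1$ by a constant to minimize conflicts does not guarantee that the remaining conflict set is independent, whether in $\Gamma_0$ or, for your fallback, in $\Gamma_1$.
\end{itemize}

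The missing idea is to make the $1$-side coloring constant on each connected component of $\Gamma_0$.
\begin{itemize}
\item Write $g=(g_p,g_q)$. An edge of $\Gamma_0$ (difference of order $p$ or $q$) fixes one coordinate and preserves both $g_p \bmod p$ and $g_q \bmod q$. So each component $G_{(g_p,g_q)}$ lies in a fibre of $g\mapsto (g_p \bmod p,\ g_q \bmod q)$.
\item For $p>q$, the coloring $c_1(g)=(g_p \bmod p + g_q \bmod q) \bmod p$ is a proper $p$-coloring of $\Gamma_1$. An edge of order $p^2$ fixes $g_q$ and changes $g_p \bmod p$. An edge of order $q^2$ fixes $g_p$ and changes $g_q \bmod q$ within $\{0,\dots,q-1\}$, and $q<p$.
\item This is what $col$ has to be. With $c_B(\bar{k}^j)\equiv j \pmod k$ and $k=p$ as literally written, $c_B$ is not even proper on $B$.
\item Since $c_1$ is constant on each component, the vertices your rule recolors in a component form a single color class of $c_0$ there, so they are pairwise non-adjacent. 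There are no $\Gamma_0$-edges between different components. Hence your $(k+1)$-coloring is proper for every proper $c_0$.
\end{itemize}
Equivalently, color each component of $\Gamma_0$, which is a copy of $K_p\square K_q$, with the $p$ colors other than its single forbidden color.
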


\begin{proof}
    Let $k = \max\{ p,q\}$. Define the coloring $c_{\Gamma}: V(\Gamma) \rightarrow \{0,1, \dots, k\}$ as follows, 
        \begin{align*}
            c_{\Gamma}((i,g)) = \begin{cases}
                col(g) \text{ (mod }k), & i =1\\
                c_0^*(g), & i =0
            \end{cases}
        \end{align*}
    for all $(i,g) \in V(\Gamma)$ where $c_0^* : V(\Gamma_0) \rightarrow \{0,1,\dots, k\}$ defined by
        \begin{align*}
            c_0^*(g) = \begin{cases}
                c_0(g) \text{ (mod }k), &c_0(g) \neq col(g)\\
                k, & c_0(g) = col(g).
            \end{cases}
        \end{align*}
    This coloring trivially ensures that all adjacent vertices in $\Gamma$ are assigned different colors.
\end{proof}

The diameter is a fundamental graph invariant that measures the maximum distance between any two vertices within the graph. Understanding the diameter provides insight into the overall structure of $\Gamma$, particularly regarding how efficiently information or connectivity can be propagated across the graph. We first present several results regarding the structural property of each subgraph.

\begin{proposition}
    The subgraph $\Gamma_0$ is a disconnected graph consisting of $pq$ disjoint components.
\end{proposition}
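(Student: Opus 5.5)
The plan is to apply the standard criterion that the connected components of a Cayley graph $\mathrm{Cay}(G,S)$ are exactly the cosets of $H := \langle S \rangle$. This reduces the statement to computing $|\langle S_1 \rangle|$ and then dividing: the number of components is the index $[G : \langle S_1 \rangle]$.

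To compute $\langle S_1\rangle$, I will work in the cyclic group $G = \langle a \rangle$ of order $p^2q^2$. The elements of order $p$ are precisely the nonidentity elements of the unique subgroup $K_p = \langle a^{pq^2}\rangle$. Likewise, the elements of order $q$ are the nonidentity elements of $K_q = \langle a^{p^2q}\rangle$. Hence $S_1 = (K_p \cup K_q)\setminus\{e_G\}$ and $\langle S_1\rangle = K_pK_q$. Since $G$ is abelian and $K_p \cap K_q = \{e_G\}$ (the orders are coprime), we get $|K_pK_q| = pq$. Concretely,
\begin{align*}
\langle S_1 \rangle = \langle a^{pq^2}, a^{p^2q}\rangle = \langle a^{\gcd(pq^2,\,p^2q)}\rangle = \langle a^{pq}\rangle,
\end{align*}
which is the unique subgroup of order $pq$. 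Its index is $p^2q^2/(pq) = pq$.

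To keep the argument self-contained, I will also verify the coset criterion directly rather than just quote it. In one direction, a walk $g \to s_1g \to s_2s_1g \to \cdots$ with each $s_i \in S_1$ never leaves the coset $\langle S_1\rangle g$, so there are no edges between distinct cosets. In the other direction, any $h \in \langle S_1\rangle g$ has the form $h = s_k\cdots s_1 g$ with $s_i \in S_1$. This uses that $S_1$ is inverse-closed and that $G$ is finite, so inverses are positive powers. That product spells out a walk from $g$ to $h$ inside $\Gamma_0$, so each coset is connected. The components are therefore exactly the $pq$ cosets of $\langle a^{pq}\rangle$, each of size $pq$. Since $pq > 1$, $\Gamma_0$ is disconnected.

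I do not expect a serious obstacle. The only point needing care is being explicit that $\langle S_1\rangle$ is the full subgroup of order $pq$, not just $K_p \cup K_q$: the product $K_pK_q$ must be used, and the gcd computation handles this. As a cross-check, I will use the $\mathbb{Z}_{p^2}\times\mathbb{Z}_{q^2}$ picture. There $\langle S_1\rangle$ corresponds to $p\mathbb{Z}_{p^2}\times q\mathbb{Z}_{q^2}$, and its cosets are indexed by the pairs of residues $(x \bmod p,\, y \bmod q)$, which again gives $pq$ components.
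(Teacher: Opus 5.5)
Your proof is correct. The paper gives no written proof of this proposition. Instead, it describes the components directly in the coordinates $\mathbb{Z}_{p^2}\times\mathbb{Z}_{q^2}$ as the classes of pairs with prescribed residues modulo $p$ and modulo $q$ (the sets $\mathbf{C}_{(g_p,g_q)}$). It then uses the map $\zeta$ to show that each such class induces $K_p\square K_q$; this is essentially your cross-check.

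Your main line is different: the coset criterion, the computation $\langle S_1\rangle=\langle a^{\gcd(pq^2,p^2q)}\rangle=\langle a^{pq}\rangle$, and the index $[G:\langle a^{pq}\rangle]=pq$. This is the same gcd device the paper uses to show that $\Gamma_1$ is connected. It makes both the count and the disconnectedness immediate and rigorous. It also matches the paper's later general remark that connectivity is controlled by whether $S_1$ or $S_2$ generates $G$.

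What the coordinate route gives in return is the internal shape of each component, namely $K_p\square K_q$. The later diameter and independence-number arguments depend on that shape. Your argument does not provide it, but it does not need to for this statement.

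Your description of a component as a coset of $p\mathbb{Z}_{p^2}\times q\mathbb{Z}_{q^2}$ is the correct one. The paper's displayed set $\mathbf{C}_{(g_p,g_q)}$ includes the extra conditions $g_p\not\equiv_{p^2}x'$ and $g_q\not\equiv_{q^2}y'$. Read literally, these exclude $(g_p,g_q)$ itself and leave only $(p-1)(q-1)$ elements, which is inconsistent with the image of $\zeta$.
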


In order to simplify the notation, we may write $\equiv_n$ to denote equivalence relation modulo $n$. 
Moreover, we have that, for each $(g_p, g_q) \in V(\Gamma_0)$, the subgraphs induced by the sets of the form
    \begin{align*}
        \mathbf{C}_{(g_p, g_q)} = \{ (x', y') \in G : g_p \equiv_p x', g_p \not\equiv_{p^2} x' \text{ and } g_q \equiv_q y', g_q \not\equiv_{q^2} y' \}
    \end{align*}
are the disjoint components of $\Gamma_0$. In subsequent discussion, said component is denoted by $G_{(g_p, g_q)}$. Further characterization of each component is provided in the following. 

\begin{proposition}
    For any $g = (g_p, g_q) \in G$, the induced subgraph $G_{(g_p, g_q)}$ is isomorphic to the cartesian product graph $K_p \square K_q$.
\end{proposition}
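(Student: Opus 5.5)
The plan is to identify the vertex set of $G_{(g_p,g_q)}$ with a coset of the subgroup $K_pK_q = K_p \times K_q$ (the unique subgroup of order $pq$). Then I would write down the adjacency rule of $\Gamma_0$ in coordinates of $\mathbb{Z}_{p^2}\times\mathbb{Z}_{q^2}$ and read off the cartesian product structure directly.

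I read the component $\mathbf{C}_{(g_p,g_q)}$ as the set of $(x',y')$ with $x' \equiv_p g_p$ and $y' \equiv_q g_q$, and I take the ``$\not\equiv$'' clauses as distinguishing the other members of the class from $g$ itself, with $g$ included. Under this reading,
\begin{align*}
\mathbf{C}_{(g_p,g_q)} = (g_p + p\mathbb{Z}_{p^2}) \times (g_q + q\mathbb{Z}_{q^2}) = g + (K_p \times K_q),
\end{align*}
since in $\mathbb{Z}_{p^2}\times\mathbb{Z}_{q^2}$ we have $K_p = p\mathbb{Z}_{p^2}\times\{0\}$ and $K_q = \{0\}\times q\mathbb{Z}_{q^2}$. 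This set has exactly $pq$ elements, which is consistent with the previous proposition that $\Gamma_0$ has $pq$ components of total size $p^2q^2$.

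Next I would establish the key arithmetic fact. For $d=(d_p,d_q)$ we have $|d| = \mathrm{lcm}(|d_p|,|d_q|)$, so $|d|=p$ if and only if $d_q = 0$ and $d_p$ is a nonzero multiple of $p$. Symmetrically, $|d| = q$ if and only if $d_p=0$ and $d_q$ is a nonzero multiple of $q$.

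Now take two distinct vertices $(x,y),(x',y')$ of $\mathbf{C}_{(g_p,g_q)}$. Their difference automatically has $p$-component in $p\mathbb{Z}_{p^2}$ and $q$-component in $q\mathbb{Z}_{q^2}$. Hence they are adjacent in $\Gamma_0$ exactly when either $y=y'$ and $x\neq x'$, or $x=x'$ and $y \neq y'$. The same fact shows that no edge of $\Gamma_0$ leaves the coset, so the induced subgraph is exactly this graph. This is also a good sanity check that it is a full component.

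Finally I would define
\begin{align*}
\varphi(x',y') = \left(\tfrac{x'-g_p}{p},\ \tfrac{y'-g_q}{q}\right) \in \mathbb{Z}_p \times \mathbb{Z}_q,
\end{align*}
where division by $p$ is the bijection $p\mathbb{Z}_{p^2}\to\mathbb{Z}_p$, and similarly for $q$. Since $\varphi$ is a bijection onto $V(K_p\square K_q)$, the adjacency description above becomes: same second coordinate and different first coordinate, or vice versa. That is precisely adjacency in $K_p \square K_q$, because $K_p$ and $K_q$ are complete graphs.

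The only real obstacle is the first step: pinning down the intended vertex set from the slightly awkward congruence description. Once it is recognized as the coset $g+K_pK_q$, the rest is the order computation via $\mathrm{lcm}$, which is routine.
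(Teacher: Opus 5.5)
Your proposal is correct and is essentially the paper's argument: your map $\varphi$ is exactly the inverse of the paper's bijection $\zeta((u_i,v_j)) = (g_p+ip,\, g_q+jq)$, and your $\mathrm{lcm}$ computation of element orders supplies the adjacency check that the paper leaves implicit. Your reading of $\mathbf{C}_{(g_p,g_q)}$ as the coset $g+(K_p\times K_q)$ is also the one the paper intends. The image of $\zeta$ contains $g$ itself, whereas the literal definition, with both $\not\equiv$ clauses, would give only $(p-1)(q-1)$ vertices and would exclude $g$.
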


Let $V(K_p \square K_q) = \{ (u_i, v_j) : i = 0,1, \dots , p-1; j = 0,1, \dots , q-1 \}$, the graph isomorphism is realized by the bijective mapping $\zeta : V(K_p \square K_q) \to V(G_{(g_p, g_q)})$ defined by 
    \begin{align*}
        \zeta((u_i, v_j)) \equiv (g_p + ip \text{ (mod }p^2), g_q + jq \text{ (mod }q^2) )
    \end{align*}
for all $(u_i, v_j) \in V(K_p \square K_q)$.

Combining the fact that $\mathrm{diam}(G_1 \square G_2) = \mathrm{diam} (G_1) + \mathrm{diam} (G_2)$, provided that $G_1$ and $G_2$ are connected, we have the following assertions. 

\begin{proposition}
    Let $(g_p, g_q) \in G$ and let $(x,y)$, $(x',y')$ be distinct vertices in $G_{(g_p, g_q)}$. Then, $d_{G_{(g_p, g_q)}}((x,y),(x',y')) = \mathbf{1}_{x \neq x'} + \mathbf{1}_{y \neq y'},$
    %\begin{align*}
    %    d_{G_{(g_p, g_q)}}((x,y),(x',y')) = \mathbf{1}_{x \neq x'} + \mathbf{1}_{y \neq y'},
    %\end{align*}
    where $\mathbf{1}$ is the characteristic function.
\end{proposition}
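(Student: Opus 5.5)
The plan is to transport the distance computation to $K_p \square K_q$ through the isomorphism $\zeta$ of the preceding proposition. The formula there is then the standard distance formula in a Cartesian product of complete graphs. Note that in this component the "coordinates" $x, x'$ range over the $p$ residues $g_p + ip \pmod{p^2}$, and $y, y'$ over the $q$ residues $g_q + jq \pmod{q^2}$. So $x \neq x'$ corresponds exactly to $i \neq i'$ under $\zeta^{-1}$, and likewise $y \neq y'$ to $j \neq j'$.

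First I will record the general fact that in a Cartesian product $G_1 \square G_2$ the distance satisfies $d((a,b),(a',b')) = d_{G_1}(a,a') + d_{G_2}(b,b')$. For the upper bound, concatenate a shortest path in the first coordinate, with the second coordinate fixed at $b$, and a shortest path in the second coordinate, with the first fixed at $a'$; both are paths in the product by definition of $\square$. For the lower bound, observe that every edge of $G_1 \square G_2$ changes exactly one coordinate, and along an edge of $G_1$. Hence any walk from $(a,b)$ to $(a',b')$ projects onto walks from $a$ to $a'$ in $G_1$ and from $b$ to $b'$ in $G_2$, whose lengths add up to the length of the original walk. This is the same fact the paper invokes for the diameter identity $\mathrm{diam}(G_1 \square G_2) = \mathrm{diam}(G_1)+\mathrm{diam}(G_2)$.

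Next, I specialize to $G_1 = K_p$ and $G_2 = K_q$. In a complete graph, the distance between two vertices is $1$ if they are distinct and $0$ otherwise, that is, $d_{K_p}(u_i,u_{i'}) = \mathbf{1}_{i \neq i'}$. Therefore $d((u_i,v_j),(u_{i'},v_{j'})) = \mathbf{1}_{i\neq i'} + \mathbf{1}_{j \neq j'}$. Since graph isomorphisms preserve distances, applying $\zeta$ gives the claimed formula in $G_{(g_p,g_q)}$. The distinctness hypothesis only guarantees that the value is $1$ or $2$.

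The main obstacle is the correctness of the isomorphism $\zeta$ itself, which I take from the preceding proposition. Should a direct check be wanted, I would verify the edge condition as follows: two vertices of the component are adjacent in $\Gamma_0$ iff their difference has order $p$ or $q$. A difference $(ip - i'p, jq - j'q)$ has order $p$ iff $i \neq i'$ and $j = j'$, and order $q$ iff $i = i'$ and $j \neq j'$. When both differ, the order is $pq$, so the vertices are not adjacent. This is exactly the adjacency rule of $K_p \square K_q$.
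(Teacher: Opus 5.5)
Your argument is correct and follows the route the paper indicates. You transport the problem through $\zeta$ to $K_p \square K_q$, use additivity of distances in a Cartesian product, and combine this with $d_{K_n}(u,u')=\mathbf{1}_{u\neq u'}$. Your version is in fact more complete than the paper's: the paper only cites the identity $\mathrm{diam}(G_1 \square G_2)=\mathrm{diam}(G_1)+\mathrm{diam}(G_2)$, which alone does not give the pointwise formula, and it never checks that $\zeta$ preserves adjacency, whereas you prove both directly.
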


\begin{theorem}
    The diameter of any component $G_{(g_p, g_q)}$ of $\Gamma_0$ is $2$.
\end{theorem}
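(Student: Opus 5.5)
The plan is to read the diameter off the distance formula in the preceding proposition, using the isomorphism $G_{(g_p,g_q)} \cong K_p \square K_q$ given by $\zeta$. For distinct vertices $(x,y),(x',y')$ of the component, that proposition gives
\begin{align*}
d_{G_{(g_p, g_q)}}((x,y),(x',y')) = \mathbf{1}_{x \neq x'} + \mathbf{1}_{y \neq y'} .
\end{align*}
This is at most $2$, so $\mathrm{diam}(G_{(g_p,g_q)}) \le 2$. Alternatively, the upper bound follows from $\mathrm{diam}(G_1 \square G_2) = \mathrm{diam}(G_1)+\mathrm{diam}(G_2)$ together with $\mathrm{diam}(K_p)=\mathrm{diam}(K_q)=1$. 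Both factors are connected complete graphs on at least two vertices, since $p,q\ge 2$, so that formula applies and gives the value $2$ directly.

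For the matching lower bound, I would exhibit two vertices whose coordinates both differ. Under $\zeta$, take $\zeta((u_0,v_0)) = (g_p, g_q)$ and $\zeta((u_1,v_1)) = (g_p+p, g_q+q)$. Both lie in the component, and they differ in both coordinates because $p \not\equiv 0 \pmod{p^2}$ and $q \not\equiv 0 \pmod{q^2}$. By the distance formula their distance is exactly $2$.

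As a sanity check directly in $\Gamma_0$, these two vertices are not adjacent. Their difference is $(p,q)$, and this element has order $pq$, which is neither $p$ nor $q$. So the distance is not $1$, and the path through $(g_p+p, g_q)$ realizes length $2$.

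The only delicate point is that $\zeta$ really is an isomorphism onto the component, which is where the definition of $\mathbf{C}_{(g_p,g_q)}$ with its ``$\not\equiv$'' clauses looks awkward. I take this as given by the preceding proposition. With that assumed, the argument is just the formula plus one explicit pair of vertices.
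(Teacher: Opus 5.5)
Your proposal is correct and follows essentially the paper's route. The component is identified with $K_p \square K_q$ via $\zeta$, and its diameter is $\mathrm{diam}(K_p)+\mathrm{diam}(K_q)=1+1=2$, which you equivalently read off from the distance formula $\mathbf{1}_{x\neq x'}+\mathbf{1}_{y\neq y'}$; your explicit pair $(g_p,g_q)$, $(g_p+p,g_q+q)$ is a harmless extra witness for the lower bound. You are also right that the literal $\not\equiv$ clauses in $\mathbf{C}_{(g_p,g_q)}$ are faulty: read literally, they exclude $(g_p,g_q)$ itself and every vertex agreeing with it in a coordinate. The intended component is the coset $(g_p,g_q)+\langle S_1\rangle=\{(g_p+ip,\,g_q+jq)\}$, which is exactly the image of $\zeta$.
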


\begin{theorem}
    The diameter of subgraph $\Gamma_1$ of the Bi-Cayley graph $\Gamma$ is $4$.
\end{theorem}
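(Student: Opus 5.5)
The natural approach is to use the decomposition of $\Gamma_1$ proved earlier: $\Gamma_1 \cong A \square B$, where $A=\mathrm{Cay}(\mathbb{Z}_{p^2},S_A)\cong K_{p,p,\dots,p}$ and $B=\mathrm{Cay}(\mathbb{Z}_{q^2},S_B)\cong K_{q,q,\dots,q}$. Combined with the additivity $\mathrm{diam}(G_1\square G_2)=\mathrm{diam}(G_1)+\mathrm{diam}(G_2)$ for connected factors, already invoked above for the components of $\Gamma_0$, this reduces the theorem to showing $\mathrm{diam}(A)=\mathrm{diam}(B)=2$.

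For $A$, I would use the partition $V(A)=\bigcup_{i=0}^{p-1}A_i$ into residue classes modulo $p$, each of size $p$.
\begin{enumerate}
\item[$(i)$] Two vertices in different classes are adjacent, so their distance is $1$.
\item[$(ii)$] Two distinct vertices $\bar{k}^n,\bar{k}^{n'}$ in the same class $A_i$ are non-adjacent, since $p \mid n-n'$ means $|\bar{k}^{n-n'}|\neq p^2$.
\item[$(iii)$] Since $p\ge 2$, there is a second class $A_j$ with $j\neq i$. Any vertex of $A_j$ is a common neighbour of the pair in $(ii)$, so their distance is exactly $2$.
\end{enumerate}
Because each class has size $p\ge 2$, the case in $(ii)$ actually occurs. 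Hence $A$ is connected and $\mathrm{diam}(A)=2$. The same argument with $q$ in place of $p$ gives $\mathrm{diam}(B)=2$.

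It remains to justify the additivity, which I would do directly to be safe. In a cartesian product, $d_{A\square B}((a,b),(a',b'))=d_A(a,a')+d_B(b,b')$. The inequality $\le$ holds because one can move in the $A$-coordinate along a shortest $A$-path and then in the $B$-coordinate along a shortest $B$-path. The inequality $\ge$ holds because each edge of $A\square B$ changes exactly one coordinate along an edge of that factor, so the projection of any path to each factor is a walk (after deleting repeated vertices) in that factor.

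Translating back to $G$ via $g=(g_p,g_q)$, the two facts give
\[ d_{\Gamma_1}(g,h)=d_A(g_p,h_p)+d_B(g_q,h_q)\le 4. \]
Equality is attained by choosing $g_p\neq h_p$ with $g_p\equiv h_p \pmod p$ and $g_q\neq h_q$ with $g_q\equiv h_q\pmod q$, for example $h=g\cdot(p,q)$. Therefore $\mathrm{diam}(\Gamma_1)=4$.

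The only step needing care is the lower bound $d_{A\square B}\ge d_A+d_B$. It is also worth stressing that the diameter is computed in the induced subgraph $\Gamma_1$, not in $\Gamma$, since paths through $\Gamma_0$ are not allowed here. Both points are routine, so I expect no real obstacle.
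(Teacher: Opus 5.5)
Your argument is correct and is essentially the paper's own route: the paper gets this theorem from $\Gamma_1 \cong A \square B$ with $A \cong K_{p,p,\dots,p}$ and $B \cong K_{q,q,\dots,q}$, each of diameter $2$, together with the additivity $\mathrm{diam}(G_1 \square G_2) = \mathrm{diam}(G_1) + \mathrm{diam}(G_2)$ for connected factors. The only difference is that the paper just quotes this additivity and leaves the proof implicit, whereas you verify $d_{A\square B} = d_A + d_B$ directly and exhibit an extremal pair, which matches the paper's distance-$4$ case ($g_p \equiv_p h_p$, $g_q \equiv_q h_q$ with both coordinates distinct).
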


After obtaining the results above, we now examine the distance properties between two vertices within each of the subgraphs $\Gamma_0$ and $\Gamma_1$, in order to determine the diameter of the Bi-Cayley graph $\Gamma$ in general.

Let $g=(g_p, g_q)$, $h= (h_p, h_q) \in G$. We first investigate the distance between the two vertices in $\Gamma_1$. We have that
    \begin{align*}
            d_{\Gamma_1} ((g_p.g_q), (h_p, h_q)) = \begin{cases}
                1, & g_p = h_p \text{ or } g_q = h_q\\
                2, &g_p \not \equiv_p h_p \text{ and } g_q \not \equiv_q h_q \\
                3, & g_p \not \equiv_p h_p  \text{ and } g_q \equiv_q h_q  \hspace{0.3cm} \text{(vice versa)} \\
                4, & g_p \equiv_p h_p  \text{ and } g_q \equiv_q h_q.
            \end{cases}
        \end{align*}

Now, in $\Gamma_0$, if the two vertices lie in the same component, the distance is at most $2$. In the case that they lie in a different component, we have that
    \begin{enumerate}
        \item [$(i)$] the distance between the two is $5$ if one of these holds;
            \begin{itemize}
                \item [$(a)$] $g_p \not\equiv_p h_p$ and $g_q \equiv_q h_q, g_q \not\equiv_{q^2} h_q$,
                \item [$(b)$] $g_p \equiv_p h_p, g_p \not\equiv_{p^2} h_p$ and $g_q \not\equiv_q h_q$,
                \item [$(c)$] $g_p \not\equiv_p h_p$ and $g_q \equiv_q h_q, g_q \not\equiv_{q^2} h_q$,
                \item [$(d)$] $g_p \equiv_p h_p, g_p \not\equiv_{p^2} h_p$ and $g_q \not\equiv_q h_q$.
            \end{itemize}
        \item [$(ii)$] the distance between the two is at most $4$ if $(i)$ does not hold, i.e., $g_p \not\equiv_p h_p$ and $g_q \not\equiv_q h_q$.
    \end{enumerate}

The other possible case is that those two vertices lie in a different subgraph, then the maximum distance is $1 + \mathrm{diam}(\Gamma_1)$, which is equal to $5$. Therefore, these observation is formally stated in the following theorem. 

\begin{theorem}
    The diameter of $\Gamma$ is $5$.
\end{theorem}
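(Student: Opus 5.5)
To prove that $\mathrm{diam}(\Gamma)=5$, I would establish the upper bound $d_\Gamma(u,v)\le 5$ for all pairs of vertices and then exhibit one pair at distance exactly $5$. The key structural fact, recorded in the remark before the clique theorem, is that the only edges between the two sides are the perfect matching $(0,g)(1,g)$. Consequently, any path that changes sides must use a matching edge. I would split the argument into three cases according to the sides of the two endpoints, using the distance table for $\Gamma_1$ and the theorem that $\mathrm{diam}(\Gamma_1)=4$.

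\emph{Upper bound.} If both vertices are $(1,g),(1,h)$, then $d_\Gamma\le d_{\Gamma_1}(g,h)\le 4$. If the vertices are $(0,g),(1,h)$, the path $(0,g)-(1,g)-\cdots-(1,h)$ gives $d_\Gamma\le 1+4=5$. If both are $(0,g),(0,h)$, I would take the shorter of two routes: stay inside $\Gamma_0$ (length at most $2$ when $g,h$ lie in the same component $G_{(g_p,g_q)}$, by the proposition giving distances $\mathbf{1}_{x\neq x'}+\mathbf{1}_{y\neq y'}$), or go $(0,g)-(1,g)-\cdots-(1,h)-(0,h)$, of length $2+d_{\Gamma_1}(g,h)$. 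From the case table, $d_{\Gamma_1}(g,h)\le 3$ unless $g_p\equiv_p h_p$ and $g_q\equiv_q h_q$, so the second route has length at most $5$. In the remaining case, $g_p\equiv_p h_p$ and $g_q\equiv_q h_q$, I would combine steps in $\Gamma_0$ with the crossing. A step in $\Gamma_0$ by an element of order $p$ changes $g_p$ within its residue class mod $p$ (or, with order $q$, changes $g_q$ within its class mod $q$). Using such steps, I can move to a vertex $h'$ whose $\Gamma_1$-distance to $h$ is small, or reach $h$ directly inside the same component. Checking the subcases (whether $g_p=h_p$, whether $g_q=h_q$) shows that a total length of at most $4$ suffices. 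This bookkeeping is the step I expect to be the main obstacle, because one must confirm that mixing $\Gamma_0$ moves with $\Gamma_1$ moves never forces length $6$. I would handle it by writing $\Gamma_1=A\square B$ with $A,B$ complete multipartite graphs and $\Gamma_0$ as a union of copies of $K_p\square K_q$, and then computing distances coordinatewise.

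\emph{Lower bound.} I would choose $g=(0,0)$ and $h=(0,0)$, i.e.\ the vertices $(0,e)$ and $(1,e)$? No: these are adjacent. Instead, the lower bound should come from a pair $(0,g),(1,h)$ with $g_p\not\equiv_p h_p$ and $g_q\equiv_q h_q$, $g_q\neq h_q$ (case (i)(a)). Any path must cross the matching at some vertex $w$, so its length is $d_{\Gamma_0}(g,w)+1+d_{\Gamma_1}(w,h)$. Vertices $w$ reachable within $\Gamma_0$ keep the residues of $g$ mod $p$ and mod $q$ (the components $\mathbf C$). Hence $w_p\not\equiv_p h_p$, and the table gives $d_{\Gamma_1}(w,h)\ge 2$, with equality only when $w_q\not\equiv_q h_q$, which is impossible since $w_q\equiv_q g_q\equiv_q h_q$. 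So $d_{\Gamma_1}(w,h)=3$ when $w=g$. For $w\ne g$ we have $d_{\Gamma_0}(g,w)\ge1$ and $d_{\Gamma_1}(w,h)\ge 3$ (or $\ge 1$ if $w_q=h_q$, but then reaching that $w$ costs more). I would verify these subcases to show every route has length at least $4$ or $5$. Then I would pick the pair to realize exactly $5$, most likely $(0,g),(1,h)$ with $g_p\equiv_p h_p$ and $g_q\equiv_q h_q$ but $g\ne h$ in both coordinates, so that $d_{\Gamma_1}(g,h)=4$ and every detour through $\Gamma_0$ only stays inside $g$'s residue classes. If no such pair achieves $5$, I would fall back on case (i) of the paper's $\Gamma_0$ analysis.
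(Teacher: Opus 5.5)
There is a genuine gap in the lower bound, and it cannot be repaired: no pair of vertices of $\Gamma$ is at distance $5$. Your upper bound is fine, but both of your candidate pairs fail.

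\textbf{Your first candidate.} Take $(0,g),(1,h)$ with $g_p\not\equiv_p h_p$, $g_q\equiv_q h_q$, $g_q\neq h_q$, and put $w=(g_p,h_q)$. Then $w$ is a $\Gamma_0$-neighbour of $g$, since the difference $(0,g_q-h_q)$ has order $q$. Also $w$ is a $\Gamma_1$-neighbour of $h$, since the difference $(g_p-h_p,0)$ has order $p^2$. So $d_\Gamma((0,g),(1,h))=3$. Your parenthetical ``reaching that $w$ costs more'' is where this goes wrong: reaching $w$ costs only one step.

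\textbf{Your second candidate.} Take $(0,g),(1,h)$ with $g_p\equiv_p h_p$, $g_q\equiv_q h_q$, and $g\neq h$ in both coordinates. Then $h$ lies in the same $\Gamma_0$-component as $g$, so $(0,g)-(0,(h_p,g_q))-(0,h)-(1,h)$ has length $3$. The fact that $\Gamma_0$-detours stay in the residue classes of $g$ is exactly what makes this pair close.

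\textbf{Your fallback.} Case (i) of the paper also fails. For $g_p\not\equiv_p h_p$, $g_q\equiv_q h_q$, $g_q\neq h_q$, the path $(0,g)-(0,(g_p,h_q))-(1,(g_p,h_q))-(1,h)-(0,h)$ has length $4$. For instance, in $\mathbb{Z}_{36}$ the path $(0,\bar{0})-(0,\bar{12})-(1,\bar{12})-(1,\bar{21})-(0,\bar{21})$ has length $4$, although the pair $g=\bar{0}$, $h=\bar{21}$ falls under case (i)(a).

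In fact, the coordinatewise analysis you propose proves $\mathrm{diam}(\Gamma)=4$.

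\textbf{Cross pairs.} Given $(0,g)$ and $(1,h)$, set $w_p=h_p$ if $h_p\equiv_p g_p$ and $w_p=g_p$ otherwise, and define $w_q$ likewise. The walk $(0,g)\to(0,w)$ uses one $\Gamma_0$-step per coordinate in which $g,h$ differ but are congruent. After crossing, $(1,w)\to(1,h)$ uses one $\Gamma_1$-step per coordinate in which they are incongruent. Each coordinate falls in at most one of these categories, so $d_\Gamma((0,g),(1,h))\le 3$.

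\textbf{Same-side pairs.} Two $0$-side vertices are within distance $2$ if they lie in the same component, and within $3+1=4$ otherwise. Two $1$-side vertices are within $\mathrm{diam}(\Gamma_1)=4$.

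\textbf{The value $4$ is attained.} Take $(0,g),(0,h)$ with $g_p\not\equiv_p h_p$ and $g_q\not\equiv_q h_q$. Residues change only along $\Gamma_1$-edges, and each such edge changes one coordinate. So any path needs two matching edges plus a $\Gamma_1$-edge in each coordinate.

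So the statement as written is false, and no choice of pair will give your lower bound. The paper's argument breaks at the same point. Its case (i) asserts distance $5$ for pairs that are at distance $4$. Its cross-side bound $1+\mathrm{diam}(\Gamma_1)=5$ is never attained, since cross pairs are at distance at most $3$.
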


We investigate properties related to the independent set and the independence number of the Bi-Cayley graph $\Gamma$. The independent sets of $\Gamma$ will first be examined on each of its subgraphs, namely $\Gamma_0$ and $\Gamma_1$. After determining the independent set within each subgraph, we will then analyze how these results relate to the independent sets of the entire Bi-Cayley graph $\Gamma$.

\begin{theorem}
    The independence number of the subgraph $\Gamma_0$ is $pq \min \{ p,q\}$. 
\end{theorem}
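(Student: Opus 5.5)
The plan is to reduce the computation to a single component of $\Gamma_0$, using the structural results already established. By the proposition on the components of $\Gamma_0$, the subgraph $\Gamma_0$ is the disjoint union of $pq$ components $G_{(g_p,g_q)}$. These are exactly the cosets of the subgroup $K_pK_q$ of order $pq$, which has index $pq$ in $G$. The independence number of a disjoint union of graphs is the sum of the independence numbers of its components. There are no edges between different components, so an independent set is precisely a union of independent sets, one chosen in each component. Hence
\begin{align*}
\alpha(\Gamma_0) = \sum_{\text{components}} \alpha\bigl(G_{(g_p,g_q)}\bigr) = pq \cdot \alpha(K_p \square K_q),
\end{align*}
where the last equality uses the proposition that each component is isomorphic to $K_p \square K_q$ via the map $\zeta$. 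It therefore suffices to show that $\alpha(K_p \square K_q) = \min\{p,q\}$.

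For the upper bound, assume without loss of generality that $p \le q$. Partition $V(K_p \square K_q)$ into the $p$ ``rows'' $R_i = \{(u_i, v_j) : 0 \le j \le q-1\}$ for $i = 0, \dots, p-1$. Each $R_i$ induces a copy of $K_q$, so $R_i$ is a clique. In the language of $\Gamma_0$, each $R_i$ is a coset of $K_q$, and by the analogue of the proposition on $K_p$-cosets such a coset is a clique. An independent set meets each clique in at most one vertex. Therefore an independent set has at most $p = \min\{p,q\}$ vertices.

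For the lower bound, take the ``diagonal'' set $\{(u_i, v_i) : 0 \le i \le p-1\}$. Any two of its vertices differ in both coordinates, and in a Cartesian product two vertices are adjacent only when exactly one coordinate differs. So this set is independent of size $p$. Via $\zeta$, the corresponding elements of $G$ are $(g_p + ip,\, g_q + iq)$. The difference of two of them has both components nonzero, so its order is divisible by $pq$; it therefore has order neither $p$ nor $q$, which confirms directly that the set is independent in $\Gamma_0$. Multiplying by the $pq$ components gives $\alpha(\Gamma_0) = pq\min\{p,q\}$.

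The only step needing real care is the correctness of the component decomposition and the identification of each component with $K_p \square K_q$; I am taking both from the earlier propositions. If I wanted to avoid relying on the slightly imprecise definition of $\mathbf{C}_{(g_p,g_q)}$ given in the paper, I would instead define the components directly as the cosets $g + (p\mathbb{Z}_{p^2} \times q\mathbb{Z}_{q^2})$. This is justified because $\langle S_1 \rangle = K_pK_q$, and the components of a Cayley graph are the cosets of the subgroup generated by the connection set. The rest of the argument is routine.
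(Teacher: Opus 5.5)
Your proposal is correct and follows the same route as the paper: decompose $\Gamma_0$ into its $pq$ components, each isomorphic to $K_p \square K_q$, and use $\alpha(K_p \square K_q)=\min\{p,q\}$. You also justify that last fact, which the paper simply calls clear, by covering the component with $\min\{p,q\}$ cliques and exhibiting the diagonal independent set. In addition, identifying the components as the cosets of $K_pK_q=\langle S_1\rangle$ correctly repairs the paper's description of $\mathbf{C}_{(g_p,g_q)}$, which as written excludes $g$ itself.
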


\begin{proof}
    We have already show that $\Gamma_0$ consists of $pq$ components, each of which are isomorphic to the cartesian product graph $K_p \square K_q$. It is clear that $\alpha (K_p \square K_q) = \min \{ p,q\}$. Therefore, the largest independent set is $pq \min \{ p,q\}$.
\end{proof}

The next step is to examine the independent set and the independence number of the subgraph $\Gamma_1$. Recall that the subgraph $\Gamma_1$ satisfies the isomorphism $\Gamma_1 \cong A \square B$ where $A \cong K_{p,p,\dots, p}$ and $B \cong K_{q,q,\dots, q}$. The graphs $A$ and $B$ are multipartite graphs with $p$ and $q$ parts, respectively. 
    
    Consequently, the vertex sets of $A$ and $B$ can be expressed as 
    \begin{align*}
    	V(A_i) &= \{ (i, \alpha) : \alpha = 0, 1, 2, \dots, p-1 \} \text{ and }   V(B_j) = \{ (j, \beta) : \beta = 0, 1, 2, \dots, q-1 \}.
    \end{align*}
    where $A_i$ denote the $i$-th part of the $p$-partite graph $K_{p,p, \dots , p}$ and $B_j$ denote the $j$-th part of the $q$-partite graph $K_{q,q, \dots , q}$. Then 
    \begin{align*}
    	\bigcup_{i=1}^{p} V(A_i) = V(A) = V(K_{p,p,\dots , p}) \text{ and } \bigcup_{j=1}^{q} V(B_j) = V(B) = V(K_{q,q,\dots , q}). 
    \end{align*} 
    Thus, the vertex set of $\Gamma_1 = A \square B$ can be written as
    \begin{align*}
    	V(A \square B) = \{((i, \alpha) , (j, \beta)) : i, \alpha = 0, 1, \dots, p-1; j, \beta = 0,1, \dots, q-1 \}. 
    \end{align*}
    It can be observed that distinct vertices $((i, \alpha), (j, \beta))$ and $((i', \alpha'), (j', \beta'))$ are adjacent if and only if either $i = i'$ \& $j \neq j'$ or $i \neq i$ \& $j = j$.  %one of the following conditions holds. 
    %\vspace{-0.2cm}
    %\begin{enumerate}
    %	\item [$(i)$] $i = i'$ and $j \neq j'$, or 
    %	\item [$(ii)$] $i \neq i'$ and $j = j'$.
    %\end{enumerate}

    \begin{remark}
    	For the remainder of this subsection, the notation $A \square B$ and $K_{p,p,\dots,p} \square K_{q,q,\dots,q}$ are used interchangeably as they refer to the same graph structure. This means that each of the corresponding vertices may be used interchangeably suited for the analysis. The vertex $((i, \alpha),(j, \beta)) \in V(K_{p,p,\dots,p} \square K_{q,q,\dots,q})$ will simply be written as $(i, \alpha; j, \beta)$.
    \end{remark}

    As a way to explicitly show the graph isomorphism of $A\square B$ and $K_{p,p,\dots,p} \square K_{q,q,\dots,q}$, we may define the function $f$ as follows. Define $f : V(A \square B) \rightarrow V(K_{p,p,\dots , p} \square K_{q,q,\dots , q})$ by
    \begin{align*}
    	f(g_p, g_q) = (i, \alpha;  j, \beta)
    \end{align*}
    for all $(g_p, g_q) \in V(A \square B)$ with $g_p = i + p \alpha$ and $g_q = j + q \beta$ with $i, \alpha \in \{0,1,\dots, p-1 \}$ and $j, \beta \in \{0,1,\dots , q-1 \}$.

    The following proposition acts as a motivation in constructing the independent sets in $\Gamma_1$. 

    \begin{proposition}
        Let $i \in \{0,1, \dots, p-1 \}$ and $j \in {0,1, \dots, q-1}$. Then the set $C(i, j) \subseteq V(K_{p,p,\dots, p} \square K_{q,q, \dots, q})$ defined as
    	\begin{align*}
    		C(i,j) = \{(i, \alpha; j, \beta) : \alpha = 0,1, \dots , p-1; \beta = 0, 1, \dots , q - 1\}
    	\end{align*}
    	is an independent set of the graph  $K_{p,p, \dots, p} \square K_{q,q, \dots, q}$ of size $|C(i,j)| = pq$.
    \end{proposition}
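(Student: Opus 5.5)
The plan is to verify the two claims separately: first that $C(i,j)$ is independent, then that it has $pq$ elements.

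For independence, I would use the adjacency rule just stated for $K_{p,p,\dots,p} \square K_{q,q,\dots,q}$. In the Cartesian product, two distinct vertices $(i,\alpha;j,\beta)$ and $(i',\alpha';j',\beta')$ are adjacent exactly when one coordinate is equal and the other is adjacent in its factor. In the complete multipartite factor $A$, two vertices $(i,\alpha)$ and $(i',\alpha')$ are adjacent if and only if $i\neq i'$, that is, they lie in different parts $A_i$ and $A_{i'}$. This follows from the earlier proposition that $A\cong K_{p,p,\dots,p}$ with parts indexed by residue classes, and the same holds for $B$ with parts $B_j$.

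Now take two distinct elements $(i,\alpha;j,\beta)$ and $(i,\alpha';j,\beta')$ of $C(i,j)$. Their $A$-coordinates $(i,\alpha),(i,\alpha')$ lie in the same part $A_i$, so they are either equal or non-adjacent. The same is true of the $B$-coordinates in $B_j$. Adjacency in $A\square B$ would require one coordinate pair to be adjacent in its factor, and neither is. Hence the two vertices are non-adjacent, so $C(i,j)$ is independent.

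For the size, the map $(\alpha,\beta)\mapsto(i,\alpha;j,\beta)$ is injective from $\{0,\dots,p-1\}\times\{0,\dots,q-1\}$, so $|C(i,j)|=pq$.

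The only delicate point is notational. The paper's stated adjacency rule ("$i=i'$ and $j\neq j'$ or vice versa") should be read as adjacency in the Cartesian product, that is, equality in one factor and different parts in the other. I would restate it this way before applying it. I would also note that the index ranges for $\alpha$ and $\beta$ must match the part sizes of the multipartite graphs.
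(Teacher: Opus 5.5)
Your proposal is correct and matches the intended argument. The paper states this proposition without proof and leaves it as a direct check from the adjacency rule given just before it: two vertices of $C(i,j)$ share both part indices $i$ and $j$, so neither factor contributes an edge, and $(\alpha,\beta)\mapsto(i,\alpha;j,\beta)$ gives $|C(i,j)|=pq$. Your caution about that rule is justified: as written, it omits the requirement $(i,\alpha)=(i',\alpha')$ (resp.\ $(j,\beta)=(j',\beta')$) that the Cartesian product needs, though for this particular statement either reading gives the same conclusion.
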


    The proposition above furnishes an independent set of size $pq$ in the subgraph $\Gamma_1$. Moreover, independent sets in $\Gamma_1$ may be expanded by taking unions of several sets $C(i,j)$, suitable choices of such unions produce larger independent sets of $\Gamma_1$.

    \begin{proposition}
    	Let $i,i' \in \{0,1, \dots, p-1 \}$ and $j,j' \in \{0,1, \dots, q-1 \}$. Any vertex in $C(i,j)$ is not adjacent to every vertex in $C(i', j')$ if and only if $i \neq i'$ and $j \neq j'$. 
    \end{proposition}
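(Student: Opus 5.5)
Throughout we work in the model $K_{p,p,\dots,p} \square K_{q,q,\dots,q}$ with vertices written $(i,\alpha;j,\beta)$. We use the adjacency rule stated just before the remark. Two distinct vertices $(i,\alpha;j,\beta)$ and $(i',\alpha';j',\beta')$ are adjacent exactly when one coordinate block agrees and the other differs in its part index:
\begin{align*}
(i,\alpha;j,\beta) \sim (i',\alpha';j',\beta') \iff (i=i',\ \alpha=\alpha',\ j\neq j') \text{ or } (j=j',\ \beta=\beta',\ i\neq i').
\end{align*}
This is the correct form of the rule, read off from $\Gamma_1 \cong A \square B$ with $A$ and $B$ complete multipartite. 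In $A \square B$, vertices are adjacent iff they coincide in one factor and are adjacent in the other. In $A = K_{p,\dots,p}$, the vertices $(i,\alpha)$ and $(i',\alpha')$ are adjacent iff $i\neq i'$, and they are equal iff $i=i'$ and $\alpha=\alpha'$. Before comparing $C(i,j)$ with $C(i',j')$, I will state this carefully, since the paper's informal phrasing suppresses the $\alpha,\beta$ equalities.

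\emph{($\Leftarrow$).} Take $u=(i,\alpha;j,\beta)\in C(i,j)$ and $v=(i',\alpha';j',\beta')\in C(i',j')$ with $i\neq i'$ and $j\neq j'$. Adjacency requires equality in one factor. Equality in the $A$-factor would force $i=i'$, and equality in the $B$-factor would force $j=j'$. Neither holds, so $u\not\sim v$. Since $u,v$ are arbitrary, no edge joins the two sets.

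\emph{($\Rightarrow$).} I argue by contrapositive: if $i=i'$ or $j=j'$, exhibit an edge between $C(i,j)$ and $C(i',j')$. The case $(i,j)=(i',j')$ is excluded, since then the sets coincide and the statement concerns distinct classes. Suppose $i=i'$ and $j\neq j'$. Pick $u=(i,0;j,0)$ and $v=(i,0;j',0)$. They agree in the $A$-factor, and $(j,0)$ and $(j',0)$ lie in different parts of $B$, hence are adjacent in $B$. So $u\sim v$. The case $j=j'$, $i\neq i'$ is symmetric, using $(i,0;j,0)$ and $(i',0;j,0)$.

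The main obstacle is purely notational. The adjacency condition as written in the paper ("$i=i'$ \& $j\ne j'$") omits the requirement that the remaining factor coordinate be \emph{equal}. Without it, the forward direction would wrongly suggest that every pair of vertices from sets sharing an index is adjacent. I will therefore first restate the Cartesian-product adjacency precisely via the multipartite structure of $A$ and $B$. Both directions then reduce to the two-line checks above. The step to be careful with is choosing matching $\alpha,\beta$ for the witness edge in the contrapositive.
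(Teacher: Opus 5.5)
Your argument is correct, and it is the direct check from the Cartesian-product adjacency rule that the paper leaves implicit; the paper states this proposition without proof, immediately after its informal adjacency description. Both of your refinements are warranted: the rule does need $\alpha=\alpha'$ (resp.\ $\beta=\beta'$), and the case $(i,j)=(i',j')$ must be excluded, since by the preceding proposition $C(i,j)$ is itself independent and the stated equivalence would otherwise fail.
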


    \begin{remark}\label{independent sets construct}
        The set $\bigcup_{i = 0}^{\mathrm{min}\{p,q\}} C(i,i)$
    	%\begin{align*}
    	%	\bigcup_{i = 0}^{\mathrm{min}\{p,q\}} C(i,i)
    	%\end{align*}
    	is an independent set of size $pq \min \{p,q \}$.
    \end{remark}

    Remark \ref{independent sets construct} serves as the motivation for constructing the largest independent set within the subgraph $\Gamma_1$. This reasoning is based on the fact that every independent set in $\Gamma_1$ can be formed from a subset or the entirety of the sets $C(i,j)$.

    \begin{proposition}
        Let $S \subseteq V(K_{p,p,\dots, p} \square K_{q,q,\dots, q})$ be an independent set. Let $i \in \{0,1,\dots, p-1\}$, $j \in \{0,1,\dots, q-1\}$. Define $S_{i,j} = S \cap C(i,j)$, then the set of indices $\mathcal{I} = \{(i,j) : S_{i,j} \neq \emptyset \}$
    	%\vspace{-0.4cm}
    	%\begin{align*}
    	%	\mathcal{I} = \{(i,j) : S_{i,j} \neq \emptyset \}
    	%\end{align*}
    	does not contain indices that share the same first or second coordinate. 
    \end{proposition}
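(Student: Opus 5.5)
The plan is a direct contradiction argument, using only the adjacency criterion stated just before the definition of $C(i,j)$: distinct vertices $(i,\alpha;j,\beta)$ and $(i',\alpha';j',\beta')$ are adjacent if and only if either $i=i'$ and $j\neq j'$, or $i\neq i'$ and $j=j'$. Note that this criterion, as the paper states it, ignores $\alpha$ and $\beta$. Suppose $\mathcal{I}$ contains two distinct indices $(i,j)\neq(i',j')$ with a common coordinate. I will show that $S$ then contains two adjacent vertices, contradicting independence.

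First I treat a shared first coordinate, $i=i'$. Distinctness of the indices forces $j\neq j'$. Since $S_{i,j}\neq\emptyset$ and $S_{i,j'}\neq\emptyset$, I pick $u=(i,\alpha;j,\beta)\in S_{i,j}$ and $v=(i,\alpha';j',\beta')\in S_{i,j'}$. These are distinct vertices because their third coordinates differ. They satisfy $i=i'$ and $j\neq j'$, so by the criterion $u$ and $v$ are adjacent. Both lie in $S$, which is the desired contradiction.

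The case of a shared second coordinate, $j=j'$, is symmetric. Now $i\neq i'$, and choosing $u\in S_{i,j}$ and $v\in S_{i',j}$ gives adjacent vertices of $S$ by the second clause of the criterion. This is the same computation as in the preceding proposition, where $C(i,j)$ and $C(i',j')$ were shown to be mutually non-adjacent exactly when $i\neq i'$ and $j\neq j'$. So I may simply invoke that proposition: if the indices share a coordinate, some vertex of $C(i,j)$ is adjacent to some vertex of $C(i',j')$. The above only makes explicit that every pair of vertices from the two classes is then adjacent, so the particular elements of $S$ chosen there are adjacent.

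The main obstacle is the justification of the adjacency criterion itself, not the combinatorics. The proof stands or falls with the rule that adjacency depends only on the part indices $i,j$. If one reads $\Gamma_1=A\square B$ as a genuine Cartesian product, adjacency also requires $\alpha=\alpha'$ or $\beta=\beta'$ in the appropriate slot. In that case two vertices such as $(0,0;0,0)$ and $(0,1;1,0)$ are non-adjacent even though their indices share the first coordinate, and the statement would need the extra hypothesis that the chosen witnesses agree in $\alpha$ (respectively $\beta$). I will therefore state explicitly that the argument uses the criterion as recorded in the paper, and flag this dependence at the point where adjacency of $u$ and $v$ is asserted.
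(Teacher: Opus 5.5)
Your caveat is decisive: the proposition is false for the graph it is about, so it cannot be proved, only conditionally derived from a wrong premise.

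\textbf{The adjacency rule.} In $A\square B$, which is $\Gamma_1$, two vertices are adjacent if and only if either $(i,\alpha)=(i',\alpha')$ and $j\neq j'$, or $(j,\beta)=(j',\beta')$ and $i\neq i'$. The rule recorded before the definition of $C(i,j)$ ignores $\alpha,\beta$. It therefore describes a different graph: $K_p\square K_q$ with each vertex blown up into an independent set of size $pq$. That graph has degree $pq(p+q-2)$, which never equals $|S_2|=p(p-1)+q(q-1)$. The paper itself abandons the quoted rule in case $(iii)$ of the proof of Proposition~\ref{karakterisasi independensi}.

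\textbf{Your counterexample.} It checks out. For $p=2$, $q=3$, the vertices $(0,0;0,0)$ and $(0,1;1,0)$ are $(0,0)$ and $(2,1)$ in $\mathbb{Z}_4\times\mathbb{Z}_9$. Their difference has order $18\notin\{4,9\}$, so they form an independent set with $\mathcal{I}=\{(0,0),(0,1)\}$.

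\textbf{What to present instead.} The paper states this proposition without proof. The argument you wrote is the only one its text supports, but it establishes the claim for the wrong graph. Present a disproof instead, together with the corrected fibrewise statement you already isolated. Two vertices of $S$ with the same $A$-coordinate $(i,\alpha)$ have the same $j$, and two with the same $B$-coordinate $(j,\beta)$ have the same $i$. This is precisely the content of Proposition~\ref{karakterisasi independensi}.

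\textbf{Maximum independent sets.} Restricting to maximum independent sets, the case the paper ultimately needs, does not rescue the claim. For $p=2$, $q=3$, take all $(0,0;0,\beta)$, all $(0,1;1,\beta)$ and all $(1,\alpha;2,\beta)$.
\begin{itemize}
\item Each fibre $\{(i,\alpha)\}\times V(B)$ meets this set inside a single part of $B$.
\item Each fibre $V(A)\times\{(j,\beta)\}$ meets it inside a single part of $A$.
\end{itemize}
So the set is independent, and it has size $3+3+6=12=pq\min\{p,q\}$, yet $\mathcal{I}=\{(0,0),(0,1),(1,2)\}$. The same set also refutes Proposition~\ref{bentuk akhir independen max}, since it contains only half of $C(0,0)$. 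What survives is the value $\alpha(\Gamma_1)=pq\min\{p,q\}$. The upper bound in Proposition~\ref{batas atas independen set} only uses that each fibre is a copy of $A$ or of $B$, and the construction in Remark~\ref{independent sets construct} attains it.
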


    Next, several conditions under which a subset $S \subseteq V(K_{p,p,\dots, p} \square K_{q,q,\dots,q})$ forms an independent set will be presented. Consider the following proposition. 
    
    \begin{proposition}\label{karakterisasi independensi}
    	For each  $(i, \alpha) \in V(K_{p,p,\dots, p})$, choose a subset $T_{(i, \alpha)} \subseteq V(K_{q,q,\dots, q})$ such that there exists unique $j' \in \{0,1,\dots, q-1\}$ satisfying $T_{(i, \alpha)} \subseteq B_{j'}$. An independent set $S \subseteq V(K_{p,p,\dots, p} \square K_{q,q,\dots, q})$ can be expressed as
    	\begin{align*}
    		S = \{(i, \alpha ; j, \beta) : (j, \beta) \in T_{(i, \alpha)} \}.
    	\end{align*}
    	The set $S$ is an independent set if and only if for every $(j, \beta) \in V(K_{q,q,\dots, q})$, there exists a unique $i' \in \{0,1,\dots, p-1 \}$ such that $\{(i, \alpha) \in V(K_{p,p,\dots,p}) : (j, \beta ) \in T_{(i, \alpha)}\} \subseteq A_{i'}$.
    \end{proposition}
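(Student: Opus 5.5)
The plan is to argue directly from the adjacency rule of the cartesian product $A \square B \cong K_{p,p,\dots,p} \square K_{q,q,\dots,q}$, stated in its precise form. Two distinct vertices $(i,\alpha;j,\beta)$ and $(i',\alpha';j',\beta')$ are adjacent if and only if one of the following holds:
\begin{enumerate}
    \item[$(i)$] $(i,\alpha)=(i',\alpha')$ and $(j,\beta),(j',\beta')$ are adjacent in $K_{q,q,\dots,q}$, i.e.\ $j \neq j'$;
    \item[$(ii)$] $(j,\beta)=(j',\beta')$ and $(i,\alpha),(i',\alpha')$ are adjacent in $K_{p,p,\dots,p}$, i.e.\ $i \neq i'$.
\end{enumerate}
The loosely stated rule in the text after the definition of $f$ should be read this way. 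By the proposition identifying $A$ and $B$ with complete multipartite graphs, a subset of $V(K_{q,q,\dots,q})$ is independent exactly when it lies inside a single part $B_{j'}$ (or is empty). Likewise, a subset of $V(K_{p,p,\dots,p})$ is independent exactly when it lies in a single part $A_{i'}$. This single fact drives both directions.

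First I note that every subset $S$ of $V(A \square B)$ can be written in the stated form. Put $T_{(i,\alpha)} := \{(j,\beta) : (i,\alpha;j,\beta) \in S\}$, the fibre of $S$ over the $A$-vertex $(i,\alpha)$. Then $S = \{(i,\alpha;j,\beta) : (j,\beta)\in T_{(i,\alpha)}\}$ tautologically. Dually, for each $B$-vertex $(j,\beta)$ define the fibre $F_{(j,\beta)} := \{(i,\alpha) : (j,\beta)\in T_{(i,\alpha)}\}$; this is exactly the set appearing in the statement.

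Next I split the edges of $A\square B$ with both ends in $S$ into the two types above. An edge of type $(i)$ inside $S$ is precisely an edge of $K_{q,q,\dots,q}$ inside some fibre $T_{(i,\alpha)}$. The hypothesis $T_{(i,\alpha)} \subseteq B_{j'}$ for a single $j'$ rules these out, because any two vertices of $T_{(i,\alpha)}$ then share the same $j$. An edge of type $(ii)$ inside $S$ is precisely an edge of $K_{p,p,\dots,p}$ inside some fibre $F_{(j,\beta)}$. Hence, under the standing hypothesis on the $T$'s, $S$ is independent if and only if every $F_{(j,\beta)}$ is independent in $K_{p,p,\dots,p}$. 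By the multipartite observation, this holds if and only if each $F_{(j,\beta)}$ is contained in a single part $A_{i'}$, which is the asserted condition.

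The two directions are then immediate. ($\Leftarrow$) If each $F_{(j,\beta)}\subseteq A_{i'}$, no type $(ii)$ edge occurs, and no type $(i)$ edge occurs by hypothesis. ($\Rightarrow$) If some $F_{(j,\beta)}$ met two parts $A_{i_1}\neq A_{i_2}$, the corresponding two vertices of $S$ would be adjacent via rule $(ii)$.

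The main obstacle is only bookkeeping. The word ``unique'' in the statement must be interpreted correctly: when a fibre is empty, the containment in a part holds vacuously, and uniqueness should be read as ``at most one part is met''. I would state this convention explicitly. I would also take care to use the correct cartesian-product adjacency rather than the index-only version written earlier in the text.
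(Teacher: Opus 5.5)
Your argument is correct and is essentially the paper's proof. The paper's $(\Rightarrow)$ likewise exhibits two adjacent vertices $(i,\alpha;j,\beta),(i',\alpha';j,\beta)\in S$ with $i\neq i'$. Its $(\Leftarrow)$ uses the same three cases: $(i,\alpha)=(i',\alpha')$, excluded by $T_{(i,\alpha)}\subseteq B_{j'}$; $(j,\beta)=(j',\beta')$, excluded because the fibre lies in a single $A_{i'}$; and both coordinates different, which gives no edge in a cartesian product.

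Your use of the genuine cartesian-product adjacency (rather than the index-only rule) is a worthwhile clarification the paper leaves implicit. So is your convention that ``unique'' means ``at most one part is met'' when a fibre is empty: under the literal reading, an independent $S$ with an empty fibre $\{(i,\alpha) : (j,\beta)\in T_{(i,\alpha)}\}$ would fail the stated condition.
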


    \begin{proof}
        \begin{enumerate}
    		\item [$(\Rightarrow)$] Suppose that $S$ is an independent set. Let $(j, \beta) \in V(K_{q,q,\dots,q})$. For the sake of contradiction, suppose there exist $(i, \alpha), (i', \alpha') \in V(K_{p,p,\dots, p})$ with $i \neq i'$ or they lie on a different part $A_i$ such that $(j, \beta) \in T_{(i, \alpha)}$ and $(j, \beta) \in T_{(i', \alpha')}$. This implies that $(i, \alpha; j, \beta), (i', \alpha' ; j, \beta) \in S$. This contradicts the independency property of $S$ since those two vertices are adjacent. Thus, it must be the case that for every $(i, \alpha)$ with $(j, \beta) \in T_{(i, \alpha)}$ lie in a same $A_i$ part. In other words, for every $(j, \beta) \in V(K_q,q,\dots,q)$, there exists unique $i' \in \{0,1,\dots, p-1 \}$ such that $\{(i, \alpha) \in V(K_{p,p,\dots,p}) : (j, \beta ) \in T_{(i, \alpha)}\} \subseteq A_{i'}$.
    		
    		\item [$(\Leftarrow)$] Suppose that there exists $j' \in \{0,1,\dots, q-1 \}$ and $i' \in \{0,1,\dots, p-1 \}$ such that for every $(i, \alpha) \in V(K_{p,p,\dots,p})$ and $(j, \beta) \in V(K_{q,q,\dots,q})$ implies $T_{(i, \alpha)} \subseteq B_{j'}$ and $\{(i, \alpha) \in V(K_{p,p,\dots,p}) : (j, \beta) \in T_{(i, \alpha)} \} \subseteq A_{i'}$. Let $(i, \alpha; j, \beta), (i', \alpha' ; j', \beta') \in S$, consider all of the possible cases. 
    		\begin{enumerate}
    			\item [$(i)$] If $(i, \alpha) = (i', \alpha')$ then $(j, \beta), (j', \beta') \in T_{(i, \alpha)} \subseteq B_{j'}$. Thus, $(j, \beta)$ and $(j', \beta')$ lie on the same $B_{j'}$ part. Therefore, $j = j'$ which means $(j, \beta)$ and $(j', \beta')$ are not adjacent in $K_{q,q,\dots,q}$. Hence, $(i, \alpha; j, \beta) $ and $ (i', \alpha' ; j', \beta')$ are not adjacent.
    			\item [$(ii)$] If $(j, \beta) = (j', \beta')$ then $(i, \alpha), (i', \alpha') \in \{(i, \alpha) \in V(K_{p,p,\dots,p}) : (j, \beta) \in T_{(i, \alpha)} \} \subseteq A_{i'}$. Thus, $(i, \alpha)$ and $(i', \alpha')$ lie on the same $A_i$ part. Therefore, $i=i'$, which implies $(i, \alpha)$ and $(i', \alpha')$ are not adjacent in $K_{p,p,\dots,p}$. Thus, $(i, \alpha; j, \beta) $ and $ (i', \alpha' ; j', \beta')$ are not adjacent.
    			\item [$(iii)$] If $(i, \alpha) \neq (i', \alpha')$ and $(j, \beta) \neq (j', \beta')$ then, clearly, $(i, \alpha; j, \beta) $ and $ (i', \alpha' ; j', \beta')$ are not adjacent.
    		\end{enumerate}
    		Thus, $S$ is an independent set. 
    	\end{enumerate}
    \end{proof}

    Analogous to the approach presented in Proposition \ref{karakterisasi independensi}, it can be observed that an independent set may be expressed as $S = \{ (i, \alpha; j, \beta) : (i, \alpha) \in R_{(j, \beta)} \}$
    %\begin{align*}
    %	S = \{ (i, \alpha; j, \beta) : (i, \alpha) \in R_{(j, \beta)} \}
    %\end{align*}
    where $R_{(j, \beta)} \subseteq V(K_{p,p,\dots, p})$ such that there exists a unique $i' \in \{0,1,\dots, p-1 \}$ satisfying $R_{(j, \beta)} \subseteq A_{i'}$. This observation can further be utilized to determine the maximum possible size of any independent set $S$ in the graph $K_{p,p,\dots, p} \square K_{q,q,\dots,q}$. The result is presented in the following proposition. 
    
    \begin{proposition}\label{batas atas independen set}
    	Consider the graph $K_{p,p,\dots, p} \square K_{q,q,\dots,q}$. For any arbitrary independent set $S$ in $K_{p,p,\dots, p} \square K_{q,q,\dots,q}$, we have $|S| \leq pq \min \{p,q \}.$
    	%\begin{align*}
    	%	|S| \leq pq \min \{p,q \}.
    	%\end{align*}
    \end{proposition}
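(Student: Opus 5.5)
The plan is to cover $V(K_{p,p,\dots,p} \square K_{q,q,\dots,q})$ by the sets $C(i,j)$ and combine two facts: each block has size $pq$, and an independent set can meet only a few blocks. By definition, the vertex $(i,\alpha;j,\beta)$ lies in exactly one block, namely $C(i,j)$, where $i \in \{0,\dots,p-1\}$ and $j \in \{0,\dots,q-1\}$. So the $pq$ sets $C(i,j)$ partition the vertex set, and each has size $pq$, as noted in the proposition introducing $C(i,j)$.

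Let $S$ be any independent set and put $S_{i,j} = S \cap C(i,j)$. The partition gives
\begin{align*}
|S| = \sum_{(i,j) \in \mathcal{I}} |S_{i,j}| \leq |\mathcal{I}| \cdot pq,
\end{align*}
where $\mathcal{I} = \{(i,j) : S_{i,j} \neq \emptyset\}$ and we used $|S_{i,j}| \leq |C(i,j)| = pq$. It therefore suffices to show $|\mathcal{I}| \leq \min\{p,q\}$.

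For this I will invoke the earlier proposition on the index set: $\mathcal{I}$ contains no two indices sharing a first coordinate and no two sharing a second coordinate. Hence the map $(i,j) \mapsto i$ is injective on $\mathcal{I}$, giving $|\mathcal{I}| \leq p$. Likewise $(i,j) \mapsto j$ is injective, giving $|\mathcal{I}| \leq q$. Together these yield $|\mathcal{I}| \leq \min\{p,q\}$, and hence $|S| \leq pq\min\{p,q\}$. Equality is attained by the construction in Remark~\ref{independent sets construct}.

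The only delicate step is the cited index-set proposition. I would make sure it really follows from adjacency. Suppose two vertices $(i,\alpha;j,\beta)$ and $(i,\alpha';j',\beta')$ of $S$ share the first index $i$ but have $j \neq j'$. The adjacency rule is that vertices are adjacent iff they agree in one coordinate of the product and are adjacent in the other factor. These two vertices agree in the $A$-coordinate exactly when $\alpha = \alpha'$, in which case they are adjacent through $B$. When $\alpha \neq \alpha'$, however, they differ in both coordinates and are not adjacent. So the index-set proposition as stated may need care.

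If it fails, I would fall back on a fiber count, which is robust. For each $b \in V(B)$, the fiber $\{a : (a,b) \in S\}$ is independent in $K_{p,\dots,p}$, so it lies in one part and has size at most $p$; summing over the $q^2$ vertices of $B$ gives $|S| \leq pq^2$. Symmetrically, fixing $a \in V(A)$ gives $|S| \leq p^2 q$. Hence
\begin{align*}
|S| \leq \min\{pq^2, p^2q\} = pq\min\{p,q\},
\end{align*}
which establishes the bound directly and is what I would write as the final proof.
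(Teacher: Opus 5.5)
Your final argument, the fiber count giving $|S|\le pq^2$ and $|S|\le p^2q$ because every fiber of an independent set lies in a single part of the complete multipartite factor, is correct and is essentially the paper's own proof, which bounds each $|T_{(i,\alpha)}|\le q$, sums over the $p^2$ vertices of $K_{p,\dots,p}$, and then does the symmetric count. Your doubt about the index-set proposition is justified: for instance, $(0,0;0,0)$ and $(0,1;1,0)$ are non-adjacent yet lie in $C(0,0)$ and $C(0,1)$, so drop the first route via $|\mathcal{I}|\le\min\{p,q\}$ entirely rather than keeping it as the main line.
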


    \begin{proof}
        In accordance to Proposition \ref{karakterisasi independensi}, any independent set $S \subseteq V(K_{p,p,\dots, p} \square K_{q,q,\dots, q})$ may be expressed as one of the following
    	\begin{align*}
    		S = \{(i, \alpha ; j, \beta) : (j, \beta) \in T_{(i, \alpha)} \} \text{ or } S = \{ (i, \alpha; j, \beta) : (i, \alpha) \in R_{(j, \beta)} \}.
    	\end{align*}
    	We shall first examine the case of an independent set of the first form. For all $(i, \alpha) \in A_i$, we have $T_{(i, \alpha)} \subseteq B_{j'}$ for unique $j' \in \{0,1,\dots, q-1 \}$, which implies $|T_{(i, \alpha)}| \leq q$
    	%\begin{align*}
    	%	|T_{(i, \alpha)}| \leq q
    	%\end{align*} 
    	for all $(i, \alpha) \in A_i$. Now, $\sum_{(i, \alpha) \in A_i} |T_{(i, \alpha)}| \leq pq.$
    	%\begin{align*}
    	%	\sum_{(i, \alpha) \in A_i} |T_{(i, \alpha)}| \leq pq.
    	%\end{align*}
    	Therefore,  
    	\begin{align*}
    		|S| = \sum_{i=1}^{p} \sum_{(i, \alpha) \in A_i} |T_{(i, \alpha)}| \leq \sum_{i=1}^{p} pq = p^2q.
    	\end{align*}
    	On the other hand, in the case that the independent set is of the second form, we analoguously have 
    	\begin{align*}
    		|S| = \sum_{j=1}^{q} \sum_{(j, \beta) \in B_j} |R_{(j, \beta)}| \leq \sum_{j=1}^{q} pq = pq^2.
    	\end{align*}
    	Hence, $|S| \leq p^2q$ and $|S| \leq pq^2$ simultanuously which implies that $|S| \leq pq \mathrm{min}\{p,q\}$.
    \end{proof}

    From Remark \ref{independent sets construct} and the upper bound for the independent set size, we conclude the following.

    \begin{theorem}
        The independence number of the subgraph $\Gamma_1$ is $pq \min \{p,q \}$.
    \end{theorem}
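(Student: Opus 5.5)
The plan is to prove the two inequalities $\alpha(\Gamma_1) \geq pq\min\{p,q\}$ and $\alpha(\Gamma_1) \leq pq\min\{p,q\}$ separately. Both rest on the identification $\Gamma_1 \cong A \square B \cong K_{p,p,\dots,p} \square K_{q,q,\dots,q}$, which is realized explicitly by the map $f(g_p,g_q) = (i,\alpha; j,\beta)$ with $g_p = i + p\alpha$ and $g_q = j + q\beta$. Since $f$ is a graph isomorphism, independent sets and their sizes transfer in both directions, so it suffices to compute the independence number of $K_{p,p,\dots,p} \square K_{q,q,\dots,q}$.

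\textbf{Lower bound.} Let $m = \min\{p,q\}$. I will use the construction in Remark \ref{independent sets construct}, taking the union $\bigcup_{i=0}^{m-1} C(i,i)$. I index from $0$ to $m-1$ so that both coordinates stay within range. Each $C(i,i)$ is independent of size $pq$ by the proposition on $C(i,j)$. For $i \neq i'$, the indices $(i,i)$ and $(i',i')$ differ in both coordinates. By the subsequent proposition, no vertex of $C(i,i)$ is then adjacent to any vertex of $C(i',i')$. The sets $C(i,i)$ are pairwise disjoint, so the union is independent of size $m \cdot pq = pq\min\{p,q\}$.

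\textbf{Upper bound.} This is Proposition \ref{batas atas independen set}. To be safe I would re-derive it directly rather than rely on the two-form representation. Let $S$ be independent and set $S_{i,j} = S \cap C(i,j)$. By the proposition on the index set $\mathcal{I}$, the pairs $(i,j)$ with $S_{i,j} \neq \emptyset$ have pairwise distinct first coordinates and pairwise distinct second coordinates. Hence $|\mathcal{I}| \leq \min\{p,q\}$. Also $|S_{i,j}| \leq |C(i,j)| = pq$. Therefore
\begin{align*}
|S| = \sum_{(i,j)\in\mathcal{I}} |S_{i,j}| \leq pq\,|\mathcal{I}| \leq pq\min\{p,q\}.
\end{align*}

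The main obstacle is justifying the claim about $\mathcal{I}$. Suppose $S_{i,j}$ and $S_{i,j'}$ are both nonempty with $j \neq j'$. The two chosen vertices $(i,\alpha; j,\beta)$ and $(i,\alpha'; j',\beta')$ are adjacent in the Cartesian product only if $(i,\alpha) = (i,\alpha')$, that is, $\alpha = \alpha'$. So this case needs care.

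I would handle it by a weighting argument over the parts $A_i$. For a fixed $(i,\alpha)$, the fibre $\{(j,\beta) : (i,\alpha;j,\beta) \in S\}$ is independent in $K_{q,\dots,q}$, so it lies in a single part $B_j$. Symmetrically, each fibre over $(j,\beta)$ lies in a single part $A_i$. Now define a bipartite relation between part-indices $i$ and $j$. Counting, we have $|S| \leq \sum_{(j,\beta)} |R_{(j,\beta)}|$, where each $R_{(j,\beta)}$ lies in one $A_i$ and so has size at most $p$. This gives $|S| \leq pq^2$, and symmetrically $|S| \leq p^2q$. Combining the two bounds yields $|S| \leq pq\min\{p,q\}$, which is exactly how Proposition \ref{batas atas independen set} proceeds, so I would invoke it.

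Combining the lower and upper bounds gives $\alpha(\Gamma_1) = pq\min\{p,q\}$.
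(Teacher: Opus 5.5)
Your proof is correct and follows the paper's argument. The lower bound is the construction of Remark~\ref{independent sets construct}, and your indexing $i=0,\dots,\min\{p,q\}-1$ fixes the off-by-one in the paper's union. The upper bound is the fibre count of Proposition~\ref{batas atas independen set}, applied with both fibrations to the same $S$ to give $|S|\le\min\{p^2q,\,pq^2\}$.

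One thing to fix: delete the $\mathcal{I}$-based paragraph entirely rather than trying to \emph{handle} it. The claim that the indices in $\mathcal{I}$ have pairwise distinct coordinates is false, not merely delicate. Two vertices of a Cartesian product can be adjacent only if they agree in one factor. So for $p=2$, $q=3$, the vertices $(0,0;0,0)$, $(0,1;1,0)$, $(1,0;2,0)$ form an independent set with $\mathcal{I}=\{(0,0),(0,1),(1,2)\}$. This gives $|\mathcal{I}|=3>\min\{p,q\}$, so the chain $|S|\le pq\,|\mathcal{I}|\le pq\min\{p,q\}$ breaks at its second step. Your fibre-counting argument does not justify that claim; it replaces it, and on its own it is already a complete proof of the upper bound.
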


    The independence numbers of the subgraphs $\Gamma_0$ and $\Gamma_1$ of the Bi-Cayley graph $\Gamma$ have been determined. However, to establish the overall independence number of the Bi-Cayley graph $\Gamma$, it is necessary to examine how the independent sets of these subgraphs are constructed. For the independent set of $\Gamma_0$, the construction is relatively straightforward, one can simply select $\mathrm{min}\{p,q\}$ pairwise non-adjacent vertices from each component. In contrast, the construction of an independent set within the subgraph $\Gamma_1$ is more intricate. To address this, the following proposition is presented to characterize the membership properties of a maximum independent set in the subgraph $\Gamma_1$. 
    
    \begin{proposition}\label{bentuk akhir independen max}
    	If $S$ is the largest independent set in $K_{p,p,\dots,p} \square K_{q,q,\dots,q}$ then $S$ is the union of $\mathrm{min}\{p,q\}$ sets of $C(i,j)$ in which all of the indices $i$ and $j$ are distinct. 
    \end{proposition}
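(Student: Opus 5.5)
The plan is to combine the earlier proposition on the index set $\mathcal{I}$ with the upper bound of Proposition \ref{batas atas independen set}, and then read off the equality case. Let $S$ be a largest independent set in $K_{p,p,\dots,p} \square K_{q,q,\dots,q}$. By the theorem on the independence number of $\Gamma_1$, we know $|S| = pq\min\{p,q\}$.

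The first step is to decompose $S$ along the sets $C(i,j)$. These sets partition the vertex set, since every vertex $(i,\alpha;j,\beta)$ lies in exactly one of them. Hence
\begin{align*}
|S| = \sum_{(i,j) \in \mathcal{I}} |S_{i,j}|, \qquad S_{i,j} = S \cap C(i,j).
\end{align*}
Trivially $|S_{i,j}| \leq |C(i,j)| = pq$.

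Next I invoke the preceding proposition: $\mathcal{I}$ contains no two indices sharing a first coordinate or a second coordinate. So $\mathcal{I}$ is a partial matching between $\{0,\dots,p-1\}$ and $\{0,\dots,q-1\}$, and therefore $|\mathcal{I}| \leq \min\{p,q\}$. Combining,
\begin{align*}
pq\min\{p,q\} = |S| \leq pq\,|\mathcal{I}| \leq pq\min\{p,q\}.
\end{align*}
Thus both inequalities are equalities. This gives $|\mathcal{I}| = \min\{p,q\}$, and $|S_{i,j}| = pq$ for every $(i,j) \in \mathcal{I}$, i.e.\ $S_{i,j} = C(i,j)$. Consequently $S = \bigcup_{(i,j)\in\mathcal{I}} C(i,j)$ is a union of exactly $\min\{p,q\}$ sets $C(i,j)$ whose first indices are pairwise distinct and whose second indices are pairwise distinct. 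That is the claim; the converse direction (such a union is independent) is already covered by the earlier proposition on non-adjacency of $C(i,j)$ and $C(i',j')$ together with Remark \ref{independent sets construct}.

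The main obstacle is the reliance on the proposition that $\mathcal{I}$ has no repeated coordinates. Every other step is counting, so the whole argument stands or falls on that proposition. I would re-examine it carefully against the exact adjacency in the Cartesian product. Two vertices $(i,\alpha;j,\beta)$ and $(i,\alpha';j',\beta')$ with $\alpha \neq \alpha'$ and $j \neq j'$ differ in both coordinates, so they are not adjacent. Hence the proposition genuinely needs justification beyond pairs sharing an $A$-vertex or a $B$-vertex. If it fails as stated, I would instead run the equality case of the double-counting in Proposition \ref{batas atas independen set}, forcing each $T_{(i,\alpha)}$ to be a full part $B_{j'}$, and try to extract the $C(i,j)$ structure from there.
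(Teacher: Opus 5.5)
\textbf{There is a genuine gap, and it sits exactly at the step you flagged.} It cannot be patched, because the statement itself is false whenever $p\neq q$. Your chain $|S|\le pq|\mathcal{I}|\le pq\min\{p,q\}$ breaks at the second inequality. The earlier proposition on $\mathcal{I}$ fails for the reason you noticed, and it fails even for maximum independent sets.

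Take $p=2$, $q=3$ and
\[
S=\{(0,0;0,\beta):0\le\beta\le 2\}\cup\{(0,1;1,\beta):0\le\beta\le 2\}\cup C(1,2).
\]
Each of the three pieces is independent. Two vertices from different pieces differ in both the $K_{p,p,\dots,p}$-coordinate and the $K_{q,q,\dots,q}$-coordinate, so they are non-adjacent in the Cartesian product. Hence $S$ is independent of size $3+3+6=12=pq\min\{p,q\}$, i.e.\ maximum. Yet $\mathcal{I}=\{(0,0),(0,1),(1,2)\}$ has $|\mathcal{I}|=3>\min\{p,q\}$. Moreover, $S\cap C(0,0)$ is a nonempty proper subset of $C(0,0)$, so $S$ is not a union of sets $C(i,j)$ at all.

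Your fallback does not rescue the argument either; it yields the true structure, which is weaker than the claim. For $p<q$, equality in $|S|=\sum_{(i,\alpha)}|T_{(i,\alpha)}|\le p^2q$ forces $T_{(i,\alpha)}=B_{\gamma(i,\alpha)}$ for every vertex $(i,\alpha)$. Independence within each copy of $K_{p,p,\dots,p}$ (fixing $(j,\beta)$) then forces only that the sets $\gamma(A_i)$, $0\le i\le p-1$, are pairwise disjoint. Nothing makes $\gamma$ constant on a part $A_i$: when $q>p$ there is room to split one part among several $B_j$, which is exactly what the example does. Only for $p=q$ does pigeonhole force every $\gamma(A_i)$ to be a singleton, giving $S=\bigcup_i C(i,\sigma(i))$ for a permutation $\sigma$.

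The paper's own proof has the same hole. It names the part containing $T_{(i,\alpha)}$ as $B_{\gamma(i)}$, tacitly assuming it depends on $i$ alone. It then asserts that maximality makes every $(i,\alpha)\in A_i$ correspond to the whole part $B_{\varphi(A_i)}$.

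What is actually true for $p<q$ is a different description. The maximum independent sets are exactly the sets $\bigcup_{j}U_j\times B_j$, where the $U_j$ ($0\le j\le q-1$) are pairwise disjoint with union $V(K_{p,p,\dots,p})$, and each $U_j$ lies inside a single part $A_i$. The symmetric description holds for $p>q$.
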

    
    \begin{proof}
    	Without the loss of generality, suppose that $S$ takes the form $S = \{(i, \alpha ; j, \beta) : (j, \beta) \in T_{(i, \alpha)} \}.$
    	%\begin{align*}
    	%	S = \{(i, \alpha ; j, \beta) : (j, \beta) \in T_{(i, \alpha)} \}.
    	%\end{align*}
    	By the definition, for each $T_{(i, \alpha)}$ there exists a unique $j' \in \{0,1,\dots,q-1\}$ such that $T_{(i,\alpha)} \subseteq B_{j'}$. By the independency property, there exists unique $i\ \in \{0,1,\dots,p-1\}$ such that the set $\{(i, \alpha) \in V(K_{p,p,\dots,p}) : (j, \beta) \in T_{(i, \alpha)} \}$ lies in the single part $A_{i'}$. To clarify the notation, those unique $i'$ and $j'$ will be denoted by $\theta(j)$ and $\gamma(i)$, respectively. This means that for all $(i, \alpha) \in V(K_{p,p, \dots, p})$ and $(j, \beta) \in V(K_{q,q,\dots, q})$, we have $T_{(i, \alpha)} \subseteq B_{\gamma(i)}$ and $\{(i, \alpha) \in V(K_{p,p,\dots,p}) : (j, \beta) \in T_{(i, \alpha)} \} \subseteq A_{\theta(j)}$. 
    	
    	Next, considering the case $p < q$, 
    	\begin{align*}
    		|S| &= \sum_{k=1}^{p} \sum_{(i, \alpha) \in A_i} |T_{(i, \alpha)}| = p^2q 
    		\iff |T_{(i, \alpha)}| = q \hspace{0.2cm}\text{for all } (i, \alpha) \in A_i.  
    	\end{align*}
    	This implies $T_{(i, \alpha)} = B_{\gamma(i)}$, or equivalently, $T_{(i, \alpha)}$ is the entire part $B_{\gamma(i)}$ of the graph $K_{q,q,\dots,q}$.
    	
    	For an index $j \in \{0,1,\dots,q\}$, define the set $U_{j} = \{(i, \alpha) \in V(K_{p,p,\dots,p}) : \gamma(i) = j \},$
    	%\begin{align*}
    	%	U_{j} = \{(i, \alpha) \in V(K_{p,p,\dots,p}) : \gamma(i) = j \},
    	%\end{align*}
    	that is, the set in which $(i, \alpha)$ corresponds to the whole part $B_{j}$. Hence, the set $\{ (i, \alpha) : (j, \beta) \in T_{(i, \alpha)} \}$ is $U_j$. From the independency property, $U_j$ is contained in part $A_{\theta(j)}$. Therefore, every $(i, \alpha)$ that corresponds to the index $\gamma(i)$ is contained in single part $A_{\gamma(i)}$. 
    	
    	Next, we claim that each index $j$ only corresponds to a single part $A_{\theta(j)}$. For the sake of contradiction, suppose that there exist a vertex in each $A_{i}$ and $A_{i'}$ with $i \neq i'$ that correspond to the same index $j$, that is $\gamma(i)=\gamma(i')$. Take the vertex $(i, \alpha) \in A_i \cap U_j$ and vertex $(i', \alpha') \in A_{i'} \cap U_j$. Observe that, for all $(j, \beta) \in B_{\gamma(i)}$, the vertices $(i, \alpha; j, \beta), (i', \alpha'; j, \beta) \in S$ are adjacent. This contradicts the fact that $S$ is an independent set. Therefore, it must be the case that $\gamma(i) \neq \gamma(i')$ for all $i \neq i'$. In other words, the index $j$ only corresponds to a single part $A_{\theta(j)}$. Equivalently, the function $\varphi : \mathcal{A} \rightarrow \{0,1,\dots q-1\}$ such that 
    	\begin{align*}
    		\varphi (A_i) = \gamma(i)
    	\end{align*}
    	where $\mathcal{A} = \{A_i : i = 0,1, \dots, p-1\}$ is injective.
    	
    	To achieve maximality, each of $(i, \alpha) \in A_i$ corresponds to the entire part $B_{\varphi(A_i)}$. Thus, 
    	\begin{align*}
    		\bigcup_{(i, \alpha) \in A_i} \left( \{(i, \alpha)) \} \times T_{(i, \alpha)} \right) = C(i, \varphi(A_i)) \subseteq S.
    	\end{align*}
    	Therefore, the largest independent set in the case of $p<q$ is $\bigcup_{i=1}^{p} C(i, \varphi(A_i)).$
    	%\begin{align*}
    	%	\bigcup_{i=1}^{p} C(i, \varphi(A_i)).
    	%\end{align*}
    \end{proof}

    \begin{corollary}
        Let $S \subseteq V(K_{p,p, \dots, p} \square K_{q,q,\dots, q})$ be any maximal independent set. Then 
    	\begin{align*}
    		S = \bigcup_{(i,j) \in \mathcal{I}} C(i,j).
    	\end{align*}
    \end{corollary}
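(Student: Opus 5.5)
The plan is to read ``maximal'' in the inclusion-wise sense; this also covers the maximum sets of Proposition \ref{bentuk akhir independen max}. Let $\mathcal{I} = \{(i,j) : S \cap C(i,j) \neq \emptyset\}$ be the index set from the earlier proposition. The goal is to show that $U := \bigcup_{(i,j)\in\mathcal{I}} C(i,j)$ is itself an independent set. Since the sets $C(i,j)$ partition the vertex set, every vertex of $S$ lies in some $C(i,j)$ with $(i,j) \in \mathcal{I}$, so $S \subseteq U$. Maximality of $S$ then forces $S = U$, which is exactly the statement.

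Before anything else I would state the adjacency rule of the cartesian product $K_{p,p,\dots,p}\square K_{q,q,\dots,q}$ precisely. Two distinct vertices $(i,\alpha;j,\beta)$ and $(i',\alpha';j',\beta')$ are adjacent if and only if one of the following holds: either $(i,\alpha)=(i',\alpha')$ and $j\neq j'$, or $(j,\beta)=(j',\beta')$ and $i\neq i'$. In particular, two vertices sharing both part indices $i=i'$ and $j=j'$ are never adjacent. This already recovers the earlier proposition that each $C(i,j)$ is independent.

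Next I would invoke the earlier proposition on the index set. For an independent $S$, no two distinct pairs in $\mathcal{I}$ share a first coordinate or a second coordinate. I would take this as given; it follows from the observation that if $(i,j),(i,j')\in\mathcal{I}$ with $j\neq j'$, one can pick witnesses in $S$ and derive adjacency. Now take two vertices of $U$, say $u\in C(i,j)$ and $v\in C(i',j')$ with $(i,j),(i',j')\in\mathcal{I}$. If $(i,j)=(i',j')$, they are non-adjacent by the remark above. Otherwise the proposition gives $i\neq i'$ and $j\neq j'$. Then $u$ and $v$ share neither their $A$-coordinate (since $i\neq i'$) nor their $B$-coordinate (since $j\neq j'$), so they are non-adjacent. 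This is the content of the earlier proposition characterizing when $C(i,j)$ and $C(i',j')$ have no edges between them. Hence $U$ is independent.

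The main obstacle is the step invoking the index-set proposition, because its justification in the paper is terse. If needed, I would reprove it directly; the case of a shared first coordinate is the delicate one. Suppose $(i,\alpha;j,\beta)\in S$ and $(i,\alpha';j',\beta')\in S$ with $j\neq j'$. When $\alpha=\alpha'$ these two vertices are adjacent immediately. When $\alpha\neq\alpha'$, they are not adjacent, so the claim as literally stated can fail for an arbitrary independent set. For a \emph{maximal} one, however, I would argue by enlargement. The vertex $(i,\alpha;j',\beta')$ would be adjacent to $(i,\alpha;j,\beta)$, so that vertex is unavailable. Instead I would recheck the corollary via a direct ``closure'' argument: show that whenever $(i,\alpha;j,\beta)\in S$, every vertex $(i,\alpha'';j,\beta'')$ can be added to $S$ without creating an edge, which gives $C(i,j)\subseteq S$.

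So the fallback I would actually write is the following. Let $x=(i,\alpha'';j,\beta'')$ with $(i,\alpha;j,\beta)\in S$. Any neighbour $y$ of $x$ in $S$ either shares the $A$-vertex $(i,\alpha'')$ and has $B$-part $j'\neq j$, or shares the $B$-vertex $(j,\beta'')$ and has $A$-part $i'\neq i$. In each case I would try to exhibit a conflict with $(i,\alpha;j,\beta)$ using Proposition \ref{karakterisasi independensi}. This is the spot where I expect real care to be needed, and where the intended meaning of ``maximal'' (possibly ``maximum'', via Proposition \ref{bentuk akhir independen max}) matters.
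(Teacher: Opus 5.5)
Your reduction is logically fine: if $U=\bigcup_{(i,j)\in\mathcal{I}}C(i,j)$ were independent, then $S\subseteq U$ and maximality would give $S=U$. You also correctly saw that the step ``$U$ is independent'' rests on the index-set proposition, and that this proposition is false: $(i,\alpha;j,\beta)$ and $(i,\alpha';j',\beta')$ with $\alpha\neq\alpha'$, $j\neq j'$ are non-adjacent in the Cartesian product. The gap cannot be closed, however. Your fallback claim (that $S\cap C(i,j)\neq\emptyset$ forces $C(i,j)\subseteq S$) is false, and so is the corollary itself, under either reading of ``maximal''.

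Take $p=2$, $q=3$ and
\[
S=\{(0,0;0,\beta)\}_{\beta}\cup\{(0,1;1,\beta)\}_{\beta}\cup\{(1,\alpha;2,\beta)\}_{\alpha,\beta},
\]
that is, $T_{(0,0)}=B_0$, $T_{(0,1)}=B_1$, $T_{(1,0)}=T_{(1,1)}=B_2$. Every $T_{(i,\alpha)}$ lies in one part of $K_{3,3,3}$. For each $(j,\beta)$, the vertices $(i,\alpha)$ with $(j,\beta)\in T_{(i,\alpha)}$ form $\{(0,0)\}$, $\{(0,1)\}$ or $\{(1,0),(1,1)\}$, each inside one part of $K_{2,2}$. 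So $S$ is independent by Proposition~\ref{karakterisasi independensi}. Since $|S|=12=pq\min\{p,q\}$, $S$ is maximum, hence maximal. Yet $\mathcal{I}=\{(0,0),(0,1),(1,2)\}$ shares a first coordinate. Moreover, $(0,0;1,0)\in C(0,1)$ is not in $S$, and it cannot be added because it is adjacent to $(0,0;0,0)\in S$. Hence $S\neq\bigcup_{(i,j)\in\mathcal{I}}C(i,j)$.

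The paper gives no separate proof; it reads the corollary off Proposition~\ref{bentuk akhir independen max}, and that proof fails at the same point. It writes $\gamma(i)$ for the index $j'$ with $T_{(i,\alpha)}\subseteq B_{j'}$, tacitly assuming $j'$ depends only on the part $A_i$ and not on $\alpha$. The paper's displayed adjacency criterion, which compares only the part indices $i,j$, is also not the adjacency of $A\square B$. Your rule is the correct one, and only under the paper's rule would $C(i,j)$ and $C(i,j')$ be completely joined so that your reduction goes through.

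What can actually be proved, for $p<q$, is that a maximum independent set has $|T_{(i,\alpha)}|=q$ for every $(i,\alpha)$. Hence $S=\bigcup_{(i,\alpha)}\{(i,\alpha)\}\times B_{\gamma(i,\alpha)}$ with $\gamma(A_i)\cap\gamma(A_{i'})=\emptyset$ for $i\neq i'$. Such an $S$ is a union of sets $C(i,j)$ exactly when $\gamma$ is constant on every part $A_i$. This is not forced, because $q>p$ leaves spare parts of $K_{q,\dots,q}$ (symmetrically if $p>q$). The statement must be weakened along these lines before any proof can succeed.
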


    After obtaining the constructions of the maximal independent sets for the subgraphs $\Gamma_0$ and $\Gamma_1$, the independence number of the entire Bi-Cayley graph $\Gamma$ can now be determined. Before doing so, however, the following observation should be noted. The independent set in the subgraph $\Gamma_1$ is formed from the sets $C(i,j)$. Under the isomorphism $\Gamma_1 \cong K_{p,p,\dots,p} \square K_{q,q,\dots,q}$, each set $C(i,j)$ corresponds to the following subset.
    \begin{align*}
    	C(i,j) \mapsto \{(i+ p\alpha, j+\beta): \alpha = 0,1,\dots,p-1; \beta = 0,1,\dots,q-1 \} \subseteq \mathbb{Z}_{p^2} \times \mathbb{Z}_{q^2}.
    \end{align*}
    Upon closer examination, it can be observed that the aforementioned set is, in fact, the component of the subgraph $\Gamma_0$ indexed by the vertex $(i,j)$. Considering $(i,j) \in \mathbb{Z}_{p^2} \times \mathrm{Z}_{q^2}$, we have $V(G_{(i,j)}) = \{(i+ p\alpha, j+\beta): \alpha = 0,1,\dots,p-1; \beta = 0,1,\dots,q-1 \}$,
    %\begin{align*}
    %	V(G_{(i,j)}) = \{(i+ p\alpha, j+\beta): \alpha = 0,1,\dots,p-1; \beta = 0,1,\dots,q-1 \}
    %\end{align*}
    where $G_{(i,j)}$ denotes a component of the subgraph $\Gamma_0$, previously referred to as $G_{(g_p,g_q)}$. Hence, it follows that the independent sets in each subgraph, $\Gamma_0$ and $\Gamma_1$, are determined by vertex sets of a similar structural form, differing only in the manner in which their respective components are connected. This structural correspondence leads to the following result. 
    
    \begin{theorem}\label{bilangan independensi graf bicayley}
    	The independence number of $\Gamma$ is $2pq \min \{p,q \} - \mathrm{min}\{p^2,q^2\}$.
    \end{theorem}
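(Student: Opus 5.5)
Write $m=\min\{p,q\}$. The plan is to prove the lower bound by an explicit construction and the upper bound by counting componentwise. The organising fact is the structural correspondence noted just before the statement: under $\Gamma_1\cong K_{p,\dots,p}\square K_{q,\dots,q}$, each block $C(i,j)$ is the vertex set of a component $G_{(i,j)}$ of $\Gamma_0$. Concretely, $C(i,j)$ is the set of $g=(g_p,g_q)$ with $g_p\equiv_p i$ and $g_q\equiv_q j$. I would first restate this cleanly, checking it against the definition of $\mathbf{C}_{(g_p,g_q)}$ and the map $f$ with $g_p=i+p\alpha$, $g_q=j+q\beta$.

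Since the only edges between the two sides are $(0,g)(1,g)$, an independent set of $\Gamma$ is exactly a pair $I_0\subseteq V(\Gamma_0)$, $I_1\subseteq V(\Gamma_1)$ with both sets independent and $I_0\cap I_1=\emptyset$ as subsets of $G$.

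\textbf{Lower bound.} Take $I_1=\bigcup_{i=0}^{m-1}C(i,i)$. By Remark \ref{independent sets construct} this set is independent in $\Gamma_1$ and has size $pqm$. It fills exactly $m$ components of $\Gamma_0$ completely. In each of the remaining $pq-m$ components of $\Gamma_0$ (each $\cong K_p\square K_q$), choose an independent set of size $m=\alpha(K_p\square K_q)$, and let $I_0$ be their union. Then $I_0\cap I_1=\emptyset$, and the total size is
\[
pqm+m(pq-m)=2pqm-m^2 .
\]

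\textbf{Upper bound.} Let $(I_0,I_1)$ be any admissible pair. For each class $C=C(i,j)$ put $a_C=|I_0\cap C|$ and $b_C=|I_1\cap C|$. Three constraints hold:
\begin{enumerate}
\item[(i)] $a_C\le m$, because $I_0\cap C$ is independent in the component $G_{(i,j)}\cong K_p\square K_q$;
\item[(ii)] $a_C+b_C\le |C|=pq$, by disjointness;
\item[(iii)] $b_C=0$ for all but at most $m$ classes, since by the proposition on the index set $\mathcal{I}$, the classes met by $I_1$ have pairwise distinct first and second indices.
\end{enumerate}
From (i) and (ii), $a_C+b_C\le m+\min\{b_C,\,pq-m\}$. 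Summing over all $pq$ classes gives
\[
|I_0|+|I_1|\le pqm+\sum_{C}\min\{b_C,pq-m\}\le pqm+m(pq-m),
\]
because by (iii) at most $m$ terms of the sum are nonzero and each is at most $pq-m$. This equals $2pqm-m^2$, matching the construction.

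\textbf{Main obstacle.} I expect the hardest part to be the bookkeeping that identifies $C(i,j)$ with the components of $\Gamma_0$. The displayed formula in the paper writes $j+\beta$ where $j+q\beta$ is meant, so I would re-derive this identification directly from the residue conditions mod $p$ and mod $q$. I would also re-check that the index-set proposition really yields at most $\min\{p,q\}$ occupied classes, which uses the injectivity argument of Proposition \ref{bentuk akhir independen max}. Once these two facts are secured, the rest is the short counting above.
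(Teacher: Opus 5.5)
Your lower-bound construction and steps (i) and (ii) are fine, but step (iii) is false, and the upper bound does not close without it. The index-set proposition you cite does not hold, and neither does Proposition \ref{bentuk akhir independen max}. The latter's proof writes $T_{(i,\alpha)}\subseteq B_{\gamma(i)}$ as if the part index depended only on $i$, when it depends on $(i,\alpha)$. In $\Gamma_1\cong A\square B$, two vertices are adjacent only if they agree in one coordinate. So, for example, $(0,0)$ and $(p,1)$ are non-adjacent in $\Gamma_1$, although they lie in $C(0,0)$ and $C(0,1)$.

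Maximum independent sets also escape (iii). Take $p=2$, $q=3$ and $I_1=\{(x,y)\in\mathbb{Z}_4\times\mathbb{Z}_9 : y\equiv_3\gamma(x)\}$ with $\gamma(0)=0$, $\gamma(2)=1$, $\gamma(1)=\gamma(3)=2$. This set is independent of size $12=\alpha(\Gamma_1)$. It meets three classes, with $b_C=3,3,6$ on $C(0,0)$, $C(0,1)$, $C(1,2)$. Then $\sum_C\min\{b_C,pq-m\}=10>8=m(pq-m)$, so your chain only gives $|I_0|+|I_1|\le 22$, not $20$.

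The true bound survives because $a_C$ is much smaller than $\min\{m,pq-b_C\}$ here. In $C(0,0)$, $I_1$ occupies the entire local row $x=0$, leaving one $K_3$ of $\Gamma_0$, so $a_{C(0,0)}\le 1$. A per-class count that records only the sizes $b_C$ cannot see this. So it is not enough to swap in a different lemma for (iii); you need a count that tracks which rows are blocked.

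The fix is to count along rows of $\mathbb{Z}_{p^2}\times\mathbb{Z}_{q^2}$. For fixed $x$, the row $\{x\}\times\mathbb{Z}_{q^2}$ splits into $q$ segments $\{x\}\times(j+q\mathbb{Z}_{q^2})$. Each segment is a clique in $\Gamma_0$, since the differences have order $q$. Two points of the row in different segments are adjacent in $\Gamma_1$, since the difference has order $q^2$. Hence $I_0$ meets the row in at most $q$ points, one per segment. $I_1$ meets the row inside a single segment, so in at most $q$ points. If $I_1$ fills that segment, $I_0$ misses it. So each row contributes at most $2q-1$, giving $|I_0|+|I_1|\le p^2(2q-1)$. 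If $p<q$, this equals $2pqm-m^2$; if $q<p$, count along columns to get $q^2(2p-1)=2pqm-m^2$. Either way this matches your construction.

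The paper itself gives no argument for this theorem beyond the overlap heuristic, which rests on the same false propositions. Your instinct to re-check the index-set claim was right: a row count of this kind is what actually proves the statement.
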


    In simpler terms, there are overlapping vertices in each construction of the largest independent sets of each subgraph. Thus, we cannot take full sized, i.e., $pq \min \{p,q\}$ independent set of each subgraphs. Those overlapping vertices constitute to $(\min \{p, q\})^2$ of the total construction in each. 
    
\subsection{Extension to Arbitrary Finite Group}
In the preceding sections, we have focused on the study of Bi-Cayley graphs over cyclic groups, where several structural and combinatorial properties were derived explicitly. These results provide a concrete foundation for understanding the behavior of Bi-Cayley graphs in a well-structured algebraic setting. We now turn to a more general framework by considering Bi-Cayley graphs over arbitrary finite groups. Having established the results for cyclic groups, we wish to extend these results to Bi-Cayley graphs over arbitrary finite groups. The general results to be presented include fundamental counting properties, connectivity, the clique number, as well as an overview of results concerning the chromatic number. 

\begin{remark}
	It is to be noted that the Bi-Cayley graphs considered in this section are those for which the set $S_3$ contains only the identity element of the finite group, unless stated otherwise.
\end{remark}
%
%In this subsection, we examine the fundamental counting properties and the general structural features of Bi-Cayley graphs over finite groups. These basic characteristics describe essential aspects of the graph, including the number of vertices and edges as well as the degree distribution, and provide a clear understanding of the overall organization of Bi-Cayley graphs. The results presented here apply to Bi-Cayley graphs over arbitrary finite groups and form a foundational component of their combinatorial structure.
%
\begin{proposition}
    Let $G$ be a finite group and let $S_1, S_2 \subseteq G$ such that $e_G \notin S_1, S_2$ and $S_1^{-1} = S_1$, $S_2^{-1} = S_2$, and $S_3 = \{e_G \}$. Let $\Gamma = \mathrm{Cay}(G, S_1, S_2, S_3)$, then $|V(\Gamma)| = 2|G|$ and $|E(G)| = \dfrac{|G|}{2}(|S_1|+|S_2|+2).$
		%\begin{align*}
		%	|V(\Gamma)| = 2|G| \text{ and } |E(G)| = \dfrac{|G|}{2}(|S_1|+|S_2|+2).
		%\end{align*}
\end{proposition}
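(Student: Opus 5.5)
The plan is a direct count: split the edge set of $\Gamma$ into the three classes fixed by the definition and count each class separately. The vertex count is immediate: $V(\Gamma)=\{0,1\}\times G$ is a disjoint union of two copies of $G$, so $|V(\Gamma)|=2|G|$. For the edges, every edge of $\Gamma$ is of exactly one of three types. It either joins two vertices of $\{0\}\times G$, or two vertices of $\{1\}\times G$, or one vertex from each side. Hence
\begin{align*}
|E(\Gamma)| = |E(\Gamma_0)| + |E(\Gamma_1)| + |E_{01}|,
\end{align*}
where $E_{01}$ denotes the set of edges between the two sides. By the earlier proposition that $\Gamma$ consists of two Cayley subgraphs, $\Gamma_0\cong\mathrm{Cay}(G,S_1)$ and $\Gamma_1\cong\mathrm{Cay}(G,S_2)$.

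First I would check that $\mathrm{Cay}(G,S)$ is $|S|$-regular whenever $e_G\notin S=S^{-1}$. The neighbours of $x$ are the vertices $y$ with $xy^{-1}\in S$, that is, $y=s^{-1}x$ for some $s\in S$. These are $|S|$ distinct vertices, since right cancellation shows $s\mapsto s^{-1}x$ is injective. None of them equals $x$, because $e_G\notin S$. Adjacency is symmetric because $yx^{-1}=(xy^{-1})^{-1}\in S^{-1}=S$. By the handshaking lemma, $|E(\Gamma_0)|=|G||S_1|/2$ and $|E(\Gamma_1)|=|G||S_2|/2$.

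Next I would count the cross edges, and this is where the hypothesis $S_3=\{e_G\}$ enters. The vertices $(0,x)$ and $(1,y)$ are adjacent if and only if $xy^{-1}=e_G$, that is, $x=y$. So $E_{01}=\{(0,g)(1,g): g\in G\}$ is a perfect matching, and $|E_{01}|=|G|$. Adding the three counts gives
\begin{align*}
|E(\Gamma)| = \frac{|G||S_1|}{2}+\frac{|G||S_2|}{2}+|G| = \frac{|G|}{2}\bigl(|S_1|+|S_2|+2\bigr).
\end{align*}

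I do not expect a real obstacle here. The one point needing care is making sure no edge is counted twice and no loop appears. This is exactly what $e_G\notin S_i$ and inverse-closedness guarantee inside each side. Across the sides, the first coordinates differ, so the three classes are disjoint. I would also note that the statement's $|E(G)|$ is meant as $|E(\Gamma)|$. As a cross-check, the formula is consistent with the degrees $|S_1|+1$ and $|S_2|+1$ used for the biregularity count in the cyclic case.
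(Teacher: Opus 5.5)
Your proof is correct and essentially the paper's argument. The paper only cites the handshaking lemma, summing the degrees $|S_1|+1$ and $|S_2|+1$ over the two sides; you apply it inside each Cayley side and count the $|G|$ cross edges of the perfect matching directly, which is the same count arranged differently (and you rightly read the cross-edge rule as $xy^{-1}\in S_3$, where the paper's displayed definition has $S_1$ by a typo).
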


The preceding proposition can be trivially concluded through basic set theory and \textit{Handshaking Lemma}.

\begin{remark}
	Utilizing previous notation, we still adopt the notation from the cyclic group cases that $\Gamma_0$ and $\Gamma_1$ respectively denote Cayley graphs $\mathrm{Cay}(G, S_1)$ and $\mathrm{Cay}(G, S_2)$. Moreover, $\Gamma$ denotes the Bi-Cayley graph $\mathrm{Cay}(G; S_1, S_2, S_3)$.
\end{remark}

\begin{proposition}
    Let $G$ be a finite group and let $\Gamma = \mathrm{Cay}(G, S_1, S_2, S_3)$ be its associated Bi-Cayley graph. Then
		\begin{align*}
			deg_{\Gamma}((i,g)) = \begin{cases}
				|S_1| + 1 & \text{ if } i = 0 \\
				|S_2| + 1 & \text{ if } i = 1 
			\end{cases}
		\end{align*}
	for all $(i, g) \in V(\Gamma)$.
\end{proposition}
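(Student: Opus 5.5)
The plan is to count neighbours of a vertex $(i,g)$ separately inside its own side and across to the other side, using the definition of $\Gamma$ together with the standing convention that $S_3=\{e_G\}$.

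\emph{Within-side neighbours.} Take first $i=0$. The neighbours of $(0,g)$ lying in $\{0\}\times G$ are the vertices $(0,y)$ with $gy^{-1}\in S_1$, that is, $y=s^{-1}g$ for some $s\in S_1$. The map $s\mapsto s^{-1}g$ is injective, so there are exactly $|S_1|$ such $y$. None of them equals $g$, because $e_G\notin S_1$. This is just the statement that $\Gamma_0=\mathrm{Cay}(G,S_1)$ is $|S_1|$-regular. Since $S_1$ is inverse closed, the relation is symmetric, so each edge is counted consistently from both endpoints. For $i=1$ the same argument with $S_2$ gives $|S_2|$ neighbours of $(1,g)$ in $\{1\}\times G$.

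\emph{Cross-side neighbours.} The neighbours of $(0,g)$ in $\{1\}\times G$ are the vertices $(1,y)$ with $gy^{-1}\in S_3=\{e_G\}$. This forces $y=g$, so there is exactly one cross neighbour, namely $(1,g)$; symmetrically, $(1,g)$ has the single cross neighbour $(0,g)$. This is the perfect matching already noted in the earlier remark on cliques. Adding the two counts gives $|S_1|+1$ for $i=0$ and $|S_2|+1$ for $i=1$.

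The only real obstacle is bookkeeping in the definition. The displayed case for $i\neq j$ is written with $S_1$, but the surrounding text and all the cyclic examples treat cross edges as governed by $S_3$. I will state explicitly that cross edges are read through $S_3$, and that inverse-closedness of $S_1$ and $S_2$ is what makes the adjacency relation well defined. With that settled, the rest is the routine counting above, and summing the degrees recovers the edge count $\tfrac{|G|}{2}(|S_1|+|S_2|+2)$ of the preceding proposition via the Handshaking Lemma.
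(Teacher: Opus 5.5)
Your proposal is correct and is essentially the argument the paper relies on (it states this result without a separate proof, using the same reasoning as its degree count in the cyclic case): each side is a Cayley graph and hence $|S_1|$-regular, resp.\ $|S_2|$-regular, and $S_3=\{e_G\}$ contributes exactly one cross edge $(0,g)(1,g)$ per vertex. You were right to read the cross-edge clause of the definition through $S_3$ rather than the misprinted $S_1$ and to invoke the standing convention $S_3=\{e_G\}$ explicitly, since that convention is what produces the $+1$ term.
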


\begin{theorem}
    Let $G$ be a finite group and let $\Gamma = \mathrm{Cay}(G, S_1, S_2, S_3)$ be its associated Bi-Cayley graph. Then the following assertions holds
		\begin{enumerate}
			\item [$(i)$] $\Gamma$ is regular if and only if $|S_1|= |S_2|$.
			\item [$(ii)$] $\Gamma$ is biregular if and only if $|S_1| \neq |S_2|$.
		\end{enumerate}    
\end{theorem}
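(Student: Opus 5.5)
The plan is to read off both assertions from the preceding degree proposition. That proposition gives $deg_{\Gamma}((0,g)) = |S_1|+1$ for every $g \in G$ and $deg_{\Gamma}((1,g)) = |S_2|+1$ for every $g \in G$. So the degree multiset of $\Gamma$ takes at most two values: $|S_1|+1$ on the $0$-side $\{0\}\times G$ and $|S_2|+1$ on the $1$-side $\{1\}\times G$. Both sides are nonempty, since $G$ contains at least $e_G$. The proof then reduces to comparing these two numbers.

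For $(i)$, I would argue both directions directly. If $|S_1| = |S_2|$, then every vertex has degree $|S_1|+1$, so $\Gamma$ is regular. Conversely, suppose $\Gamma$ is regular. Pick $(0,e_G)$ and $(1,e_G)$; their degrees must coincide, so $|S_1|+1 = |S_2|+1$, giving $|S_1|=|S_2|$.

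For $(ii)$, I would use the convention that \emph{biregular} means the vertex degrees take exactly two distinct values; here they are realized on the natural bipartition $\{0\}\times G$, $\{1\}\times G$, with each class regular. If $|S_1| \neq |S_2|$, the degrees are constant on each side and differ between sides, so exactly two degree values occur and $\Gamma$ is biregular. Conversely, if $\Gamma$ is biregular, it has two distinct degree values. Since the only possible degrees are $|S_1|+1$ and $|S_2|+1$, these must differ, so $|S_1|\neq|S_2|$. Alternatively, $(ii)$ follows from $(i)$: two values occur exactly when the graph is not regular.

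The only delicate point is definitional. If "biregular" were read loosely so as to include the regular case, with degrees constant on each of the two sides, then $(ii)$ would fail in the direction biregular $\Rightarrow |S_1|\neq|S_2|$. I will therefore state explicitly at the start that biregular means exactly two distinct degrees, matching the paper's usage in the cyclic case, where the degrees $p+q-1$ and $p(p-1)+q(q-1)+1$ differ. With that fixed, no other obstacle remains; the argument is just the degree proposition plus this comparison.
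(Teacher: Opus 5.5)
Your proposal is correct and is the same argument the paper intends: the paper writes no separate proof and places the theorem directly after the degree proposition, so both assertions follow from comparing $|S_1|+1$ with $|S_2|+1$, as you do. Your explicit choice that ``biregular'' means exactly two distinct degree values matches the paper's usage in the cyclic case, and it makes $(ii)$ the negation of $(i)$.
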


%\subsection{Graph Connectivity}
%With the fundamental counting properties and general structural features of Bi-Cayley graphs now in place, we turn to an examination of their global structural behavior. One of the most basic yet significant global properties of a graph is its connectivity, which determines whether the graph consists of a single connected component or decomposes into several disjoint parts. In the context of Bi-Cayley graphs, connectivity is closely tied to the algebraic structure of the underlying group and the subsets that define the graph. We therefore proceed to investigate the conditions under which a Bi-Cayley graph is connected.
%
We show that the connectivity of the entire Bi-Cayley graph depends solely on the connectivity of one of its Cayley subgraphs. Consequently, it suffices to examine the connectivity of each subgraph individually. This result is stated formally in the following theorem.

\begin{theorem}
	Let $G$ be a finite group. The associated Bi-Cayley graph $\Gamma$ is connected if and only if either $\Gamma_0$ or $\Gamma_1$ is connected.
\end{theorem}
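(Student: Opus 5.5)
We prove the two implications separately, working throughout with $S_3=\{e_G\}$, so that the only edges between the two sides are the matching edges $(0,g)(1,g)$ for $g\in G$.

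\emph{The ``if'' direction.} This generalises the argument already given for the cyclic case. Suppose $\Gamma_1$ is connected; the case where $\Gamma_0$ is connected is symmetric after swapping the roles of $0$ and $1$. Take two vertices $(i,x),(j,y)$ and split into the same three cases as before.
\begin{enumerate}
\item[$(i)$] If $i=j=1$, the two vertices are joined by a path inside the connected graph $\Gamma_1$.
\item[$(ii)$] If $i=j=0$, use the path $(0,x)-(1,x)-\cdots-(1,y)-(0,y)$. The middle segment exists because $\Gamma_1$ is connected.
\item[$(iii)$] If $i\neq j$, say $i=0$, use the path $(0,x)-(1,x)-\cdots-(1,y)$.
\end{enumerate}
Only the matching edges and the connectivity of one side are used. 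No commutativity is needed, so this direction holds verbatim for an arbitrary finite group.

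\emph{The ``only if'' direction.} Here we first describe the components of $\Gamma$ algebraically. Put $H=\langle S_1\cup S_2\rangle$. A walk in $\Gamma$ moves from $(i,x)$ to $(i,sx)$ with $s\in S_1$ or $s\in S_2$ according to the side, or it switches sides keeping the group coordinate fixed. Hence the group coordinate of any vertex reachable from $(i,x)$ lies in the coset $Hx$. Conversely, every element of $Hx$ is reached, because we can switch sides freely to apply whichever generator we need next. So $\Gamma$ is connected if and only if $\langle S_1\cup S_2\rangle=G$. On the other hand, $\Gamma_0$ (respectively $\Gamma_1$) is connected if and only if $\langle S_1\rangle=G$ (respectively $\langle S_2\rangle=G$), by the Cayley graph connectivity criterion recalled in the introduction. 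The ``only if'' direction therefore reduces to the following claim: from $\langle S_1\cup S_2\rangle=G$, deduce that $\langle S_1\rangle=G$ or $\langle S_2\rangle=G$.

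\emph{The main obstacle.} The claim above is the crux, and it does not follow from the bare axioms on $S_1,S_2$. What makes it work is that the two generated subgroups are comparable, say $\langle S_1\rangle\subseteq\langle S_2\rangle$. Then $\langle S_1\cup S_2\rangle=\langle S_2\rangle$, so connectivity of $\Gamma$ forces $\Gamma_1$ to be connected.

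My first step is to verify this comparability for the associated connection sets the paper carries over from the cyclic setting, namely sets defined by element orders. In $\mathbb Z_{p^2q^2}$, every element of order $p$ is a $p$-th power of an element of order $p^2$, and similarly for $q$. Thus $S_1\subseteq\langle S_2\rangle$, which gives exactly the required inclusion. For a general finite group, I will state this inclusion explicitly as the standing property of the associated sets, namely that $S_1\subseteq\langle S_2\rangle$ or $S_2\subseteq\langle S_1\rangle$. I will check it in each intended family, for instance when $S_1$ consists of suitable powers of elements of $S_2$. Once the inclusion is in hand, the coset description above finishes the proof.
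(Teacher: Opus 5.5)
Your ``if'' direction is correct, and it is the same path-switching argument the paper gives. The gap is in the ``only if'' direction, and it cannot be closed. In the generality of this section ($S_3=\{e_G\}$, with $S_1,S_2$ arbitrary inverse-closed subsets avoiding $e_G$), that implication is false, just as your reduction to $\langle S_1\cup S_2\rangle=G$ suggests.

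Take $G=\mathbb{Z}_6$, $S_1=\{\bar 2,\bar 4\}$ (the elements of order $3$), $S_2=\{\bar 3\}$ (the element of order $2$) and $S_3=\{\bar 0\}$. Then $\Gamma_0$ is two disjoint triangles and $\Gamma_1$ is three disjoint edges, so neither side is connected. Yet the walk $(0,\bar 0),(1,\bar 0),(1,\bar 3),(0,\bar 3)$ joins the two triangles of $\Gamma_0$, and the matching edges then reach every vertex of $\Gamma_1$. So $\Gamma$ is connected.

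The paper's support for the converse is only the assertion that if both sides are disconnected ``there would be no configuration in which the entire graph is connected''. This example refutes it. The subsequent Remark, that $\Gamma$ is connected iff $S_1$ or $S_2$ generates $G$, fails for the same reason.

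Consequently, your proposed remedy does not prove the stated theorem; it changes it. Declaring that $S_1\subseteq\langle S_2\rangle$ or $S_2\subseteq\langle S_1\rangle$ is a ``standing property of the associated sets'' imports a hypothesis that appears nowhere in the statement. Your plan to verify it for order-defined connection sets would also fail: the example above is order-defined, and $\langle S_1\rangle$ and $\langle S_2\rangle$ are incomparable there. In the cyclic $p^2q^2$ case the inclusion is beside the point, because $\langle S_2\rangle=G$ already.

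What your coset argument does prove is the corrected statement. The components of $\Gamma$ are the sets $\{0,1\}\times Hx$ with $H=\langle S_1\cup S_2\rangle$, so $\Gamma$ is connected if and only if $\langle S_1\cup S_2\rangle=G$. This is the $S_3=\{e_G\}$ analogue of the paper's later theorem for $S_3=I(G)$. Connectivity of one side is sufficient but not necessary. It becomes necessary only under an extra assumption such as your comparability condition, which holds automatically when, for example, $G$ is a cyclic $p$-group, whose subgroups form a chain.

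You should present the counterexample and state the corrected version. If you want the ``either side'' formulation, keep the comparability condition as an explicit hypothesis.
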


It is rather intuitive result since we can trivially construct a path if either of the subgraphs is connected simply by jumping to the connected one, then back to the other side, if needed. Conversely, if both of $\Gamma_0$ and $\Gamma_1$ are disconnected, then there would be no configuration in which the entire graph is connected. In terms of its pure algebraic properties, the following remark is an immediate corollary to the theorem above. 

\begin{remark}
    Let $G$ be a finite group and let $S_1, S_2 \subseteq G$ such that $e_G \notin S_1, S_2$ and $S_1^{-1} = S_1$, $S_2^{-1} = S_2$, and $S_3 = \{e_G \}$. The Bi-Cayley graph $\Gamma$ is connected if and only if either $S_1$ or $S_2$ generates $G$.
\end{remark}

Closely related to graph connectivity, the Eulerian property of a Bi-Cayley graph over an arbitrary finite group can be characterized as follows.

\begin{theorem}\label{eulerianity of bicayley}
	Let $G$ be a finite group and let $\Gamma$ be the associated Bi-Cayley graph. Then $\Gamma$ is an Eulerian graph if and only if $S_1$ and $S_2$ both contain odd number of elements.
\end{theorem}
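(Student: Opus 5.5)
The plan is to combine the degree formula from the preceding proposition with Euler's classical criterion. A connected graph with at least one edge is Eulerian if and only if every vertex has even degree. By that proposition, every vertex $(0,g)$ has degree $|S_1|+1$ and every vertex $(1,g)$ has degree $|S_2|+1$, for all $g \in G$. So the parity condition on the whole vertex set reduces to two numbers: all degrees are even exactly when $|S_1|+1$ and $|S_2|+1$ are both even, that is, when $|S_1|$ and $|S_2|$ are both odd. Both directions of the biconditional follow from this single equivalence.

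I would carry out the steps in this order. First, recall the degree proposition and note that it already absorbs the edge contributed by $S_3=\{e_G\}$, the perfect matching $(0,g)\sim(1,g)$. Second, for ($\Rightarrow$): if $\Gamma$ is Eulerian, then every vertex has even degree. Picking one vertex on each side gives $|S_1|+1$ and $|S_2|+1$ even, hence $|S_1|$ and $|S_2|$ odd. Third, for ($\Leftarrow$): if both sizes are odd, every degree is even, and Euler's theorem then produces an Euler circuit, provided $\Gamma$ is connected.

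The main obstacle is connectivity. The parity condition alone does not force $\Gamma$ to be connected. For example, if neither $S_1$ nor $S_2$ generates $G$, then by the preceding theorem and remark $\Gamma$ is disconnected. It has no isolated vertices, since every degree is at least $1$ because of the matching edges, so it cannot be Eulerian in the usual sense. I would therefore handle this explicitly, in one of two ways. The first is to read the statement under the standing hypothesis that $\Gamma$ is connected, i.e.\ that $S_1$ or $S_2$ generates $G$, as in the preceding remark, and invoke that theorem to supply connectivity in ($\Leftarrow$). The second is to interpret ``Eulerian'' componentwise, meaning every component has an Euler circuit, in which case only the degree-parity argument is needed. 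I would state the first version as the theorem and mention the second as the unconditional form.

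Finally, I would add a short remark making the condition algebraic. Since $S_i$ is inverse-closed and $e_G\notin S_i$, the non-involutions in $S_i$ pair off as $\{s,s^{-1}\}$. Hence $|S_i|$ is odd if and only if $S_i$ contains an odd number of involutions. This connects the statement to the later emphasis on connecting sets made of involutions, and it shows in particular that $|G|$ must be even for $\Gamma$ to be Eulerian. It is also consistent with the earlier cyclic case: there $|S_2|=p(p-1)+q(q-1)$ is even, so $\Gamma$ is not Eulerian.
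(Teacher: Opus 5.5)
Your core argument is right, and it is the one the paper evidently intends. The paper states this theorem without proof, but its cyclic-case non-Eulerian proposition uses the same parity reasoning: by the degree formula, all degrees are even exactly when $|S_1|$ and $|S_2|$ are odd, and Euler's criterion finishes once connectivity is available. You are also right that connectivity must be added, since the printed statement is false. For $G=\mathbb{Z}_4$ and $S_1=S_2=\{\bar 2\}$, both sets have odd size, yet $\Gamma$ is two disjoint $4$-cycles.

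The error is in how you characterize connectivity. With $S_3=\{e_G\}$:
\begin{itemize}
\item the side-$0$ neighbours of $(0,y)$ are the vertices $(0,sy)$ with $s\in S_1$;
\item the side-$1$ neighbours of $(1,y)$ are the vertices $(1,sy)$ with $s\in S_2$;
\item the only cross edge at $y$ is $(0,y)(1,y)$.
\end{itemize}
Hence the components of $\Gamma$ are exactly the sets $\{0,1\}\times \langle S_1\cup S_2\rangle y$. So $\Gamma$ is connected if and only if $\langle S_1\cup S_2\rangle=G$, not if and only if $S_1$ or $S_2$ generates $G$. The paper's connectivity theorem and the remark after it are wrong in the ``only if'' direction. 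Your sentence ``if neither $S_1$ nor $S_2$ generates $G$, then $\Gamma$ is disconnected'' and your ``i.e.'' inherit that error.

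Concretely, take $G=\mathbb{Z}_2\times\mathbb{Z}_2=\{e,a,b,ab\}$ with $S_1=\{a\}$ and $S_2=\{b\}$. Neither set generates $G$. But $\Gamma$ is the single $8$-cycle through $(0,e),(0,a),(1,a),(1,ab),(0,ab),(0,b),(1,b),(1,e)$ in that order. It is connected and Eulerian, with $|S_1|$ and $|S_2|$ odd.

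So if you impose the hypothesis ``$S_1$ or $S_2$ generates $G$'', your theorem is true but not sharp. The correct statement is: $\Gamma$ is Eulerian if and only if $|S_1|$ and $|S_2|$ are both odd and $\langle S_1\cup S_2\rangle=G$. Your componentwise variant is correct. So is your closing remark that $|S_i|$ is odd if and only if $S_i$ contains an odd number of involutions, which forces $|G|$ to be even.
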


%\subsection{Clique Number}
%The clique number is a central invariant in graph theory, as it measures the extent of local completeness and plays an important role in both extremal and structural problems. For Bi-Cayley graphs over finite groups, the clique number reflects how the algebraic structure of the group and the choice of subsets determine the possible configurations of fully adjacent vertex sets.

\begin{proposition}\label{clique size >2}
	Let $G$ be a finite group and let $\Gamma$ be the associated Bi-Cayley graph. The clique of size greater than $2$, if there exists, must lie entirely within one side.
\end{proposition}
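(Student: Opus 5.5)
The argument rests on the observation that, since $S_3=\{e_G\}$, the edges between the two sides form a perfect matching. Indeed, $(0,x)$ is adjacent to $(1,y)$ exactly when $xy^{-1}=e_G$, that is, when $x=y$. So each vertex $(0,g)$ has exactly one neighbour on the $1$-side, namely $(1,g)$, and each $(1,g)$ has exactly one neighbour on the $0$-side, namely $(0,g)$. This is the same fact recorded in the earlier remark for the cyclic case, and its proof never used commutativity or the specific $S_1,S_2$.

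With this in hand, let $K$ be a clique in $\Gamma$ with $|K|\ge 3$, and suppose for contradiction that $K$ meets both sides. Then $K$ contains some $(0,g)$ and some $(1,h)$. Because $|K|\ge 3$, there is a third vertex $v\in K$, and $v$ lies on one of the two sides.

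Suppose first that $v$ is on the $0$-side. Then $(1,h)$ is adjacent to two distinct $0$-side vertices, namely $(0,g)$ and $v$. This contradicts the matching property.

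Suppose instead that $v$ is on the $1$-side. Then $(0,g)$ is adjacent to two distinct $1$-side vertices, $(1,h)$ and $v$, which is again a contradiction.

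Either way the assumption fails, so $K$ lies entirely in $\{0\}\times G$ or entirely in $\{1\}\times G$.

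There is no real obstacle here. The only point needing care is that the matching property depends on $S_3=\{e_G\}$, which is the standing assumption of this subsection, and this should be stated explicitly. It may also be worth noting that a clique of size $2$ can cross sides, for example $\{(0,g),(1,g)\}$, which is why the statement is restricted to cliques of size greater than $2$.
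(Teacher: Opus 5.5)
Your proof is correct and takes essentially the same route as the paper. The paper justifies this proposition by pointing back to its cyclic-case remark: with $S_3=\{e_G\}$, the cross edges are exactly the pairs $(0,g)(1,g)$, so no vertex has more than one neighbour on the opposite side, and hence any clique of size at least $3$ lies within one side. Your case split on which side the third vertex lies makes that final step explicit.
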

The approach is very much similar to the cyclic group case, which is still applicable here.

Following the proposition, we can now determine the clique number of the entire Bi-Cayley structure. This is formally stated in the preceeding theorem. 

\begin{theorem}
	Let $G$ be a finite group and let $\Gamma$ be the associated Bi-Cayley graph. Then $\omega(\Gamma) = \mathrm{max}\{\omega(\Gamma_0), \omega(\Gamma_1), 2\}.$
%		\begin{align*}
%			\omega(\Gamma) = \mathrm{max}\{\omega(\Gamma_0), \omega(\Gamma_1), 2\}.
%		\end{align*}
\end{theorem}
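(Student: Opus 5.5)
The plan is to prove the two inequalities $\omega(\Gamma) \geq \max\{\omega(\Gamma_0), \omega(\Gamma_1), 2\}$ and $\omega(\Gamma) \leq \max\{\omega(\Gamma_0), \omega(\Gamma_1), 2\}$ separately, using Proposition \ref{clique size >2} as the key structural input.

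For the lower bound, I will use the fact that $\Gamma_0 = \mathrm{Cay}(G,S_1)$ and $\Gamma_1 = \mathrm{Cay}(G,S_2)$ are induced subgraphs of $\Gamma$ on $\{0\}\times G$ and $\{1\}\times G$. Hence any clique of $\Gamma_0$ or of $\Gamma_1$ is a clique of $\Gamma$, which gives $\omega(\Gamma) \geq \max\{\omega(\Gamma_0), \omega(\Gamma_1)\}$. For the bound by $2$, I will use that $S_3 = \{e_G\}$. Then for any $g \in G$ the vertices $(0,g)$ and $(1,g)$ are adjacent, since $g g^{-1} = e_G \in S_3$. This edge is a clique of size $2$, so $\omega(\Gamma) \geq 2$. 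This part does not even require $S_1$ or $S_2$ to be nonempty.

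For the upper bound, I will let $K$ be a maximum clique of $\Gamma$ and split into two cases.
\begin{enumerate}
\item[$(i)$] If $|K| \leq 2$, then $\omega(\Gamma) = |K| \leq 2$ and we are done.
\item[$(ii)$] If $|K| > 2$, then Proposition \ref{clique size >2} says $K$ lies entirely in $\{0\}\times G$ or entirely in $\{1\}\times G$. In the first situation $K$ is a clique of the induced subgraph $\Gamma_0$, so $|K| \leq \omega(\Gamma_0)$. In the second situation, analogously, $|K| \leq \omega(\Gamma_1)$.
\end{enumerate}
Either way, $|K| \leq \max\{\omega(\Gamma_0), \omega(\Gamma_1), 2\}$.

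The only real content is Proposition \ref{clique size >2}, which the paper states but only sketches. If it needs justifying, I would argue as follows. Because $S_3 = \{e_G\}$, each vertex $(i,g)$ has exactly one neighbor on the opposite side, namely $(1-i,g)$. Now suppose a clique of size at least $3$ met both sides. Some side would then contain two of its vertices, say $(i,x)$ and $(i,y)$ with $x \neq y$. Any clique vertex $(1-i,z)$ on the other side is adjacent to both, which forces $z = x$ and $z = y$, a contradiction. I expect this to be the main (though mild) obstacle, and the combination with the case split above finishes the proof.
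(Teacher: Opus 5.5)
Your proposal is correct and follows essentially the same route as the paper. The lower bound comes from $\Gamma_0,\Gamma_1$ being induced subgraphs together with the edge between $(0,g)$ and $(1,g)$, and the upper bound comes from the fact that cliques of size greater than $2$ lie on one side; splitting by $|K|$ instead of by which sides $K$ meets is equivalent, and your short justification of that fact via $S_3=\{e_G\}$ is sound and fills in a step the paper only sketches.
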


\begin{proof}
    First, we seek to prove that $\omega(\Gamma) \geq \mathrm{max}\{\omega(\Gamma_0), \omega(\Gamma_1), 2\}$. We prove each part separately. Since $\Gamma_0$ is an induced subgraph of $\Gamma$, any clique in $\Gamma_0$ remains a clique in $\Gamma$. Thus, $\omega(\Gamma) \geq \omega(\Gamma_0)$. Similar argument gives $\omega(\Gamma) \geq \omega (\Gamma_1)$. Furthermore, for any $g \in G$, $(0,g)$ and $(1,g)$ forms an edge which is a clique of size $2$. Hence $\omega(\Gamma) \geq 2$. Therefore, the desired $\omega(\Gamma) \geq \mathrm{max}\{\omega(\Gamma_0), \omega(\Gamma_1), 2\}$ is obtained.

    Next, we wish to show that $\omega(\Gamma) \leq \mathrm{max}\{\omega(\Gamma_0), \omega(\Gamma_1), 2\}$. Let $K$ be a clique in $\Gamma$. Consider all possibilities for how $K$ meets the two sides. If $K \subseteq V(\Gamma_0)$, then $K$ is a clique in $\Gamma_0$, so $|K| \leq \omega(\Gamma_0)$. Similarly, if $K \subseteq V(\Gamma_1)$, then $K$ is a clique in $\Gamma_1$, so $|K| \leq \omega(\Gamma_1)$. Lastly, if $K \cap (V(\Gamma_0) \cup V(\Gamma_1)) \neq \emptyset$, then by Proposition \ref{clique size >2}, it follows that $|K| \leq 2$. In all cases, we have $|K| \leq \mathrm{max}\{\omega(\Gamma_0), \omega(\Gamma_1), 2\}.$ Since $K$ is arbitrary, it follows that $\omega(\Gamma) \leq \mathrm{max}\{\omega(\Gamma_0), \omega(\Gamma_1), 2\}.$
	 Therefore, it can be concluded that $\omega(\Gamma) = \mathrm{max}\{\omega(\Gamma_0), \omega(\Gamma_1), 2\}.$
	 	%\begin{align*}
	 	%	\omega(\Gamma) = \mathrm{max}\{\omega(\Gamma_0), \omega(\Gamma_1), 2\}.
	 	%\end{align*}
\end{proof}

%\subsection{Chromatic Number}
We have analyzed the chromatic number of Bi-Cayley graphs arising from certain cyclic groups, where explicit colorings could be constructed. We now extend this investigation to Bi-Cayley graphs over arbitrary finite groups. %The chromatic number is a fundamental graph invariant that measures the minimum number of colors required for a proper vertex coloring and reflects both local adjacency constraints and global structural features of the graph. In the generalized finite group setting, the chromatic number is influenced by the interplay between the algebraic structure of the group and the defining subsets of the Bi-Cayley graph.

\begin{theorem}
	Let $G$ be a finite group and let $\Gamma$ be the associated Bi-Cayley graph. Then $\mathrm{max}\{\chi(\Gamma_0), \chi(\Gamma_1)\} \leq  \chi(\Gamma) \leq 1 + \mathrm{max}\{\chi(\Gamma_0), \chi(\Gamma_1)\}.$
		%\begin{align*}
		%	\mathrm{max}\{\chi(\Gamma_0), \chi(\Gamma_1)\} \leq  \chi(\Gamma) \leq 1 + \mathrm{max}\{\chi(\Gamma_0), \chi(\Gamma_1)\}. 
		%\end{align*}
\end{theorem}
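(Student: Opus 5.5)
The plan is to prove the two inequalities separately, with $S_3=\{e_G\}$ as in the standing remark of this subsection.

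\emph{Lower bound.} By the earlier proposition, $\Gamma_0$ and $\Gamma_1$ are the subgraphs of $\Gamma$ induced by $\{0\}\times G$ and $\{1\}\times G$. A proper colouring of $\Gamma$ restricts to a proper colouring of each induced subgraph. Hence $\chi(\Gamma)\ge \chi(\Gamma_0)$ and $\chi(\Gamma)\ge\chi(\Gamma_1)$, which gives the left inequality.

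\emph{Upper bound.} Put $k=\max\{\chi(\Gamma_0),\chi(\Gamma_1)\}$. I would imitate the construction used earlier in the cyclic case, in the theorem giving $\chi(\Gamma)=\max\{p,q\}+1$.

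1. Fix proper colourings $c_0:G\to\{0,\dots,k-1\}$ of $\Gamma_0$ and $c_1:G\to\{0,\dots,k-1\}$ of $\Gamma_1$. Both exist because each $\chi(\Gamma_i)\le k$.

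2. Colour $(1,g)$ with $c_1(g)$.

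3. Colour $(0,g)$ with $c_0(g)$ when $c_0(g)\ne c_1(g)$, and with the new colour $k$ when $c_0(g)=c_1(g)$.

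We then check the three kinds of edges.

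1. Edges inside $\{1\}\times G$ are fine because $c_1$ is proper.

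2. Cross edges join $(0,g)$ to $(1,g)$, since $S_3=\{e_G\}$. The colour of $(0,g)$ differs from $c_1(g)$ by construction: either it equals $c_0(g)\ne c_1(g)$, or it equals $k\notin\{0,\dots,k-1\}$.

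3. Edges inside $\{0\}\times G$ need the most care, because colour $k$ may now be used on two adjacent vertices $(0,g)$ and $(0,h)$ of $\Gamma_0$. This is the main obstacle, and the naive recolouring above does not obviously avoid it.

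To handle the obstacle, I would switch to a cleaner colouring that puts the spare colour on the other side. Colour $(0,g)$ with $c_0(g)$. Colour $(1,g)$ with $c_1(g)$ if $c_1(g)\ne c_0(g)$, and with $k$ otherwise. The same difficulty moves to $\Gamma_1$, so this alone does not help.

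The fix is to permute colours so that conflicts vanish. Choose the colouring of $\Gamma_1$ as $\sigma\circ c_1$, where $\sigma$ is the cyclic shift $x\mapsto x+1 \pmod{k+1}$, now viewing $c_1$ as taking values in $\{0,\dots,k\}$. A cleaner general route is the following.

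1. Use $k+1$ colours $\{0,\dots,k\}$.

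2. Colour $\Gamma_0$ by $c_0$, which takes values in $\{0,\dots,k-1\}$.

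3. Colour $\Gamma_1$ by $c_1'(g)=c_1(g)+1$, which takes values in $\{1,\dots,k\}$ and is still proper on $\Gamma_1$.

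4. A conflict on the cross edge $(0,g)(1,g)$ occurs exactly when $c_0(g)=c_1(g)+1$.

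This still leaves residual conflicts, so the shift alone is not enough either. For a genuine general bound I would instead reduce to a standard graph-theoretic fact. The cross edges form a perfect matching $M$, and $\Gamma=(\Gamma_0\sqcup\Gamma_1)+M$. I would then argue greedily: colour $\Gamma_1$ with $c_1$ using colours $\{0,\dots,k-1\}$, and colour $\Gamma_0$ with $c_0$ composed with a permutation $\pi$ of $\{0,\dots,k\}$. The vertices whose colour clashes with their matched partner are then recoloured.

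I expect this recolouring step to be the real difficulty, since a single extra colour must absorb all clashes inside $\Gamma_0$. If no clean argument appears, I would check small examples, such as disjoint triangles matched to triangles in $\Gamma_1$, to test whether the right-hand inequality holds in full generality or needs extra hypotheses on $S_1$ and $S_2$.
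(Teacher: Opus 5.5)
\textbf{Verdict: the proposal has a genuine gap in the upper bound, and the gap cannot be closed because the upper bound is false.}

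Your lower bound is correct and is the same as the paper's. For the upper bound you do not give a proof. Each construction you try breaks at a conflict you correctly identify, and the closing greedy recolouring with a permutation $\pi$ is not an argument.

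The paper's proof is exactly your second construction: keep $c_0$ on side $0$, use $c_1$ on side $1$, but give the spare colour $k+1$ to every $(1,g)$ with $c_1(g)=c_0(g)$. The paper then simply asserts that this colouring is proper. Your objection applies verbatim: two $\Gamma_1$-adjacent vertices $(1,g),(1,h)$ with $c_1(g)=c_0(g)$ and $c_1(h)=c_0(h)$ both receive the spare colour.

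Here is a counterexample. Take $G=\mathbb{Z}_4\times\mathbb{Z}_4$, $H=\{0\}\times\mathbb{Z}_4$, $K=\mathbb{Z}_4\times\{0\}$, $S_1=G\setminus H$, $S_2=G\setminus K$ and $S_3=\{e_G\}$.
\begin{itemize}
\item $\Gamma_0$ is the complete $4$-partite graph whose parts are the rows $R_a=\{a\}\times\mathbb{Z}_4$.
\item $\Gamma_1$ is the complete $4$-partite graph whose parts are the columns $C_b=\mathbb{Z}_4\times\{b\}$.
\item Hence $\chi(\Gamma_0)=\chi(\Gamma_1)=4$, and the claimed bound would give $\chi(\Gamma)\le 5$.
\end{itemize}

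Suppose $\Gamma$ had a proper $5$-colouring.
\begin{itemize}
\item Any two blocks $\{0\}\times R_a$ are completely joined, so their colour sets are nonempty and pairwise disjoint. With only $5$ colours, at most one block can use two or more colours.
\item So at least three rows are monochromatic on side $0$, in three distinct colours.
\item Symmetrically, at least three columns are monochromatic on side $1$, in three distinct colours.
\item Two $3$-element subsets of a $5$-element palette intersect. So some $\{0\}\times R_a$ and some $\{1\}\times C_b$ carry the same single colour.
\item Then the adjacent vertices $(0,(a,b))$ and $(1,(a,b))$ clash.
\end{itemize}
Hence $\chi(\Gamma)\ge 6>1+\max\{\chi(\Gamma_0),\chi(\Gamma_1)\}$. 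Over $\mathbb{Z}_k\times\mathbb{Z}_k$ the same argument gives $\chi(\Gamma)\ge k+2$ for every $k\ge 4$.

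So the statement as written is false. In general only two bounds survive: your lower bound, and the trivial bound $\chi(\Gamma)\le 2\max\{\chi(\Gamma_0),\chi(\Gamma_1)\}$, obtained by colouring $(i,g)$ with the pair $(c_i(g),i)$. The $+1$ bound needs extra hypotheses on $S_1,S_2$.

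Your planned check with disjoint triangles would not have exposed this. When both sides are disjoint unions of cliques of size at most $k$, fix $c_0$; each $(1,g)$ then has the list $\{0,\dots,k\}\setminus\{c_0(g)\}$ of size $k$, and Hall's theorem gives a $(k+1)$-colouring. The obstruction comes instead from complete multipartite sides whose parts cross transversally.
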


\begin{proof}
    We will establish the lower bound for the chromatic number of $\Gamma$. Since both $\Gamma_0$ and $\Gamma_1$ are induced subgraphs of $\Gamma$, any proper coloring of $\Gamma$, when restricted to either side, gives a proper coloring of that induced subgraph. Hence, $\chi(\Gamma) \geq \chi(\Gamma_0)$ and $\chi(\Gamma) \geq \chi(\Gamma_1).$ Thus, taking the maximum yields the desired lower bound.

    Next, we will establish the upper bound for the chromatic number of $\Gamma$. Let $k = \mathrm{max}\{\chi(\Gamma_0), \chi(\Gamma_1)\}$. Let $c_0 : V(\Gamma_0) \rightarrow \{1, 2, \dots, k\}$ and $c_1 : V(\Gamma_1) \rightarrow \{1, 2, \dots, k\}$
			%\begin{align*}
			%	c_0 : V(\Gamma_0) \rightarrow \{1, 2, \dots, k\} \text{ and } c_1 : V(\Gamma_1) \rightarrow \{1, 2, \dots, k\}
			%\end{align*}
		be a proper coloring of $\Gamma_0$ and $\Gamma_1$, respectively. Recall the structure of cross edges, that is, $(0,g)$ is adjacent to $(1,g)$ for all $g \in G$. Thus a conflict of color occurs only if $c_0((0,g)) = c_1((1,g))$ 
			%\begin{align*}
			%	c_0((0,g)) = c_1((1,g))
			%\end{align*}
		for some $g \in G$. Define a coloring $c : V(\Gamma) \rightarrow \{1,2,\dots, k+1\}$ 
			%\begin{align*}
			%	c : V(\Gamma) \rightarrow \{1,2,\dots, k+1\}
			%\end{align*}
		as follows, 
			\begin{align*}
				c((i,g)) = \begin{cases}
					c_0((0,g)) & \text{ if } i = 0 \\
					c_1((1,g)) & \text{ if } i=1 \text{ and } c_1((1,g)) \neq c_0((0,g)) \\
					k+1 & \text{ if } i=1 \text{ and } c_1((1,g)) = c_0((0,g)).
				\end{cases}
			\end{align*}
		Since each side has a chromatic number less than or equal to $k$, $c$ is a proper coloring of the Bi-Cayley graph $\Gamma$. Thus, we have that $\chi(\Gamma) \leq 1 + \mathrm{max}\{\chi(\Gamma_0), \chi(\Gamma_1)\}.$ 
			%\begin{align*}
			%	\chi(\Gamma) \leq 1 + \mathrm{max}\{\chi(\Gamma_0), \chi(\Gamma_1)\}.
			%\end{align*}
	Therefore, we have $\mathrm{max}\{\chi(\Gamma_0), \chi(\Gamma_1)\} \leq  \chi(\Gamma) \leq 1 + \mathrm{max}\{\chi(\Gamma_0), \chi(\Gamma_1)\}.$
		%\begin{align*}
		%	\mathrm{max}\{\chi(\Gamma_0), \chi(\Gamma_1)\} \leq  \chi(\Gamma) \leq 1 + \mathrm{max}\{\chi(\Gamma_0), \chi(\Gamma_1)\}.
		%\end{align*}
\end{proof}

%\section{Special Cases of $S_3$ being Involutions}
We now extend our consideration to a broader and more structurally interesting setting. In contrast to the previously studied case, where the third connecting set was restricted to contain only the identity element, we shall now allow it to encode richer internal symmetries of the group. In particular, we focus on the situation in which the connecting set $S_3$ is chosen to be the set of all involutions of the finite group $G$. This choice introduces additional interactions between the two components of the Bi-Cayley structure and, as will be seen, leads to several significant modifications in the resulting graph properties.

Recall that an element $a \in G$ is called an involution if it has order two, equivalently, $a$ is an element satisfying $a^2 = e_G$ and $a \neq e_G$, so that $a$ is its own inverse. The set of all involutions of $G$ is denoted by $I(G)$. Naturally, the set $I(G)$ is inverse closed. In this setting, we deliberately chose $S_3 = I(G)$. Several characteristics of the associated Bi-Cayley graph $\mathrm{Cay} (G; S_1, S_2, I(G))$ will be presented, though it is limited spanning from general structure to clique number. 

%\subsection{General Structure}
Let $G$ be a finite group and $I(G)$ the set of its involutions. Let $S_1$ and $S_2$ be the standard connecting sets of the Bi-Cayley graph $\Gamma_{I(G)} = \mathrm{BiCay}(G; S_1, S_2, I(G))$. Now, we introduce a new subgraph structure other than the $\Gamma_0$ and $\Gamma_1$ subgraph. Since the chosen set $S_3$ is more intricate than what we are discussed before, we introduce subgraph $\Gamma^{ce}$, as we call it, the cross-edges subgraph. The cross-edges subgraph is defined to be a graph where we remove all edges in $\mathrm{Cay}(G,S_1)$ and $\mathrm{Cay}(G,S_2)$. %with $V(\Gamma^{ce}) = V(\Gamma)$ and  
   % \begin{align*}
   %     (i,x)(j,y) \in E(\Gamma^{ce}) \iff i \neq j \text{ and } xy^{-1} \in I(G) 
   % \end{align*}
%for all $(i,x), (j,y) \in V(\Gamma^{ce})$.

\begin{proposition}
    The cross-edges subgraph $\Gamma^{ce}$ is a $|I(G)|$-regular graph.
\end{proposition}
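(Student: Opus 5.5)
The plan is to compute the degree of an arbitrary vertex of $\Gamma^{ce}$ directly from the adjacency rule, using the intended definition of the cross-edges subgraph. Here $V(\Gamma^{ce}) = V(\Gamma_{I(G)}) = \{0,1\}\times G$, and $(i,x)(j,y) \in E(\Gamma^{ce})$ if and only if $i \neq j$ and $xy^{-1} \in I(G)$. This is exactly what is left of $\Gamma_{I(G)}$ after removing the edges of $\mathrm{Cay}(G,S_1)$ and $\mathrm{Cay}(G,S_2)$, because the only remaining edges are those governed by $S_3 = I(G)$.

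\textbf{The degree count.} Fix a vertex $(0,x)$. Its neighbours in $\Gamma^{ce}$ are precisely the vertices $(1,y)$ with $xy^{-1} \in I(G)$. Consider the map
\begin{align*}
\psi_x : I(G) \to N_{\Gamma^{ce}}((0,x)), \qquad \psi_x(a) = (1, a^{-1}x).
\end{align*}
It is well defined, since $x(a^{-1}x)^{-1} = a \in I(G)$. It is injective because right multiplication is injective in $G$. It is surjective because every neighbour $(1,y)$ arises from $a = xy^{-1}$. Hence $\deg_{\Gamma^{ce}}((0,x)) = |I(G)|$.

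By the description of $\Gamma^{ce}$, the neighbours of $(1,y)$ are the vertices $(0,x)$ with $xy^{-1} \in I(G)$, equivalently $x = ay$ for some $a\in I(G)$. The map $a \mapsto (0,ay)$ is again a bijection from $I(G)$ onto this neighbourhood, so $\deg_{\Gamma^{ce}}((1,y)) = |I(G)|$ as well.

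\textbf{The expected obstacle: symmetry of the adjacency relation.} The only delicate point is that, for cross edges, the rule "$xy^{-1}\in S_3$" must define a symmetric relation. Otherwise "degree" depends on which endpoint we start from. This is where I would invoke that $I(G)$ is inverse closed, noted just before the proposition since every involution is its own inverse. Concretely, $xy^{-1}\in I(G)$ if and only if $(xy^{-1})^{-1} = yx^{-1}\in I(G)$. So whether the edge is tested from the $0$-side or the $1$-side gives the same edge set, and the two counts above are consistent.

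\textbf{Remaining remarks.} Loops do not occur, since $i\neq j$ forces the endpoints to lie on different sides. Multiple edges do not occur either, since adjacency is a relation on pairs of vertices. Therefore every vertex of $\Gamma^{ce}$ has degree exactly $|I(G)|$, so $\Gamma^{ce}$ is $|I(G)|$-regular; when $I(G)=\emptyset$ it is edgeless, i.e.\ $0$-regular. As a sanity check, $\Gamma^{ce}$ is bipartite with parts $\{0\}\times G$ and $\{1\}\times G$, and it has $|G|\,|I(G)|$ edges, which agrees with the handshaking lemma since $2|G|\cdot|I(G)|/2 = |G|\,|I(G)|$.
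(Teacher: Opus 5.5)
Your proof is correct and follows the same route as the paper. The paper simply notes that each $(0,g)$ is adjacent to exactly the $|I(G)|$ vertices $(1,gs)$, $s\in I(G)$. Your bijection $a\mapsto(1,a^{-1}x)$, the parallel count on the $1$-side, and the symmetry check via $I(G)=I(G)^{-1}$ supply the details the paper calls ``clear''; your left-translate parametrization also matches the rule $xy^{-1}\in I(G)$ directly, whereas the paper's right translates $(1,gs)$ give the right neighbourhood only because $I(G)$ is closed under conjugation ($g(gs)^{-1}=gsg^{-1}$).
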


It is clear that for each $(0,g) \in V(\Gamma^{ce})$, it is adjacent to $(1, gs)$ for all $s \in I(G)$ hence the degree of $(0,g)$ is $|I(G)|$. The following theorem explain all the basic counting of the whole Bi-Cayley graph $\Gamma$. 

\begin{theorem}
    Let $G$ be a finite group and let $\Gamma_{I(G)} = \mathrm{Cay}(G; S_1, S_2, I(G))$ be the associated Bi-Cayley graph. Then $|V(\Gamma_{I(G)})| = 2|G|$ and $|E(\Gamma_{I(G)})| = \dfrac{|G|}{2}(|S_1|+|S_2|+ 2|I(G)|).$
        %\begin{align*}
         %   |V(\Gamma_{I(G)})| = 2|G| \text{ and } |E(\Gamma_{I(G)})| = \dfrac{|G|}{2}(|S_1|+|S_2|+ 2|I(G)|).
        %\end{align*}
\end{theorem}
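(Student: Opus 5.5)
The plan is a direct count: first the vertex set, then the edges by the handshaking lemma applied to the three edge types. Since $V(\Gamma_{I(G)}) = \{0,1\} \times G$ by definition, $|V(\Gamma_{I(G)})| = 2|G|$ is immediate.

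For the edges, I would compute the degree of each vertex and split its neighbourhood into same-side and cross-side neighbours.

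\begin{itemize}
\item \emph{Same-side neighbours.} A vertex $(0,g)$ has exactly $|S_1|$ neighbours in $\{0\}\times G$, namely the vertices $(0,sg)$ with $s \in S_1$. These are distinct because right or left multiplication is a bijection, and they avoid $(0,g)$ itself because $e_G \notin S_1$. Likewise $(1,g)$ has $|S_2|$ neighbours in $\{1\} \times G$.
\item \emph{Cross neighbours.} By the preceding proposition on $\Gamma^{ce}$, every vertex has exactly $|I(G)|$ cross neighbours. The neighbours of $(0,g)$ are the vertices $(1,h)$ with $gh^{-1} \in I(G)$. The map $h \mapsto gh^{-1}$ is a bijection, and $I(G)$ is inverse-closed, so the condition $(0,g)\sim(1,h)$ is symmetric: it reads the same from either side. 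There is also no double counting, because a cross edge can never coincide with a same-side edge.
\end{itemize}

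It follows that
\[
\deg(0,g) = |S_1| + |I(G)|, \qquad \deg(1,g) = |S_2| + |I(G)|.
\]
Summing over all $2|G|$ vertices gives
\[
\sum_{v} \deg(v) = |G|\bigl(|S_1| + |S_2| + 2|I(G)|\bigr).
\]
Halving this by the handshaking lemma yields the stated value of $|E(\Gamma_{I(G)})|$.

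The only delicate point is the cross edges. I need the adjacency to be well defined and symmetric, which uses $I(G) = I(G)^{-1}$. I also need the $\Gamma^{ce}$ proposition to hold for vertices on the $1$-side as well as the $0$-side, so I would verify it explicitly there. A cleaner alternative that avoids degree sums on this part: count the cross edges directly as $|G|\cdot|I(G)|$, since each cross edge has exactly one endpoint on the $0$-side. Then count the edges within each side as $|G||S_1|/2$ and $|G||S_2|/2$. Adding these three counts gives the same formula and serves as a check.
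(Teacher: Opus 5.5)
Your proposal is correct and is essentially the paper's argument: the paper gives no separate proof. As in the $S_3=\{e_G\}$ case, it treats the result as immediate from $V(\Gamma_{I(G)})=\{0,1\}\times G$ and the handshaking lemma applied to the degrees $|S_1|+|I(G)|$ and $|S_2|+|I(G)|$ recorded in the following proposition. Your explicit checks (cross adjacency is symmetric because $I(G)=I(G)^{-1}$, and the direct count of $|G|\,|I(G)|$ cross edges) are sound and simply fill in details the paper leaves implicit.
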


It is also clear that the previous results on the degree and regularity also persists, hence the following also holds.

\begin{proposition}
    Let $G$ be a finite group and let $\Gamma_{I(G)} $ be its associated Bi-Cayley graph. Then
		\begin{align*}
			deg_{\Gamma_{I(G)}}((i,g)) = \begin{cases}
				|S_1| + |I(G)| & \text{ if } i = 0 \\
				|S_2| + |I(G)| & \text{ if } i = 1 
			\end{cases}
		\end{align*}
	for all $(i, g) \in V(\Gamma)$.
\end{proposition}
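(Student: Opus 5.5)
The plan is to count the neighbours of an arbitrary vertex $(i,g) \in V(\Gamma_{I(G)})$ directly from the adjacency rule. I will split them into neighbours on the same side and neighbours on the opposite side. Because the defining edge rule treats the two kinds of neighbours by disjoint cases ($i=j$ versus $i \neq j$), these two neighbour sets are disjoint. The degree is therefore their sum.

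For the same-side neighbours of $(0,g)$, the induced subgraph on $\{0\}\times G$ is still $\Gamma_0 = \mathrm{Cay}(G,S_1)$; this is unaffected by the choice of $S_3$. The neighbours are exactly the vertices $(0,y)$ with $gy^{-1} \in S_1$, i.e. $y = s^{-1}g$ for $s \in S_1$. The map $s \mapsto s^{-1}g$ is injective, and $e_G \notin S_1$ ensures $y \neq g$, so no loops arise. Hence there are exactly $|S_1|$ such neighbours. Since $S_1 = S_1^{-1}$, the relation is symmetric, and this agrees with the earlier degree proposition for $S_3=\{e_G\}$. The same argument gives $|S_2|$ same-side neighbours for $(1,g)$.

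For the cross neighbours, I will invoke the preceding proposition that $\Gamma^{ce}$ is $|I(G)|$-regular. Concretely, $(0,g)$ is adjacent to $(1,y)$ precisely when $gy^{-1} \in I(G)$, i.e. $y = s^{-1}g = sg$ for $s \in I(G)$. This uses that involutions are self-inverse, which also makes the relation symmetric between $(0,g)$ and $(1,y)$. The map $s \mapsto sg$ is injective, giving exactly $|I(G)|$ cross neighbours, and likewise for $(1,g)$.

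Adding the two counts yields $|S_1|+|I(G)|$ for $i=0$ and $|S_2|+|I(G)|$ for $i=1$. The only point requiring care, and the one I expect to be the main (mild) obstacle, is well-definedness. Cross adjacency must be symmetric, so that $xy^{-1}\in I(G)$ if and only if $yx^{-1}\in I(G)$; this holds because $I(G)$ is inverse closed. No cross edge is double counted, since each pair $(0,x),(1,y)$ either is or is not an edge. Here, unlike the case $S_3=\{e_G\}$, a vertex may have many cross neighbours, but the bijection with $I(G)$ handles this uniformly.
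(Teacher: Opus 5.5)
Your argument is correct and follows the same route as the paper. The paper obtains the degree by adding the same-side count $|S_1|$ or $|S_2|$ from $\Gamma_0,\Gamma_1$ to the $|I(G)|$ cross-neighbours given by the $|I(G)|$-regularity of $\Gamma^{ce}$, asserting this as clear; your check that cross adjacency is symmetric because $I(G)$ is inverse-closed, and your description of the cross-neighbours as $(1,sg)$ rather than the paper's $(1,gs)$ (the same set, since $I(G)$ is closed under conjugation), just make that justification explicit.
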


\begin{theorem}
    Let $G$ be a finite group and let $\Gamma_{I(G)} $ be its associated Bi-Cayley graph. Then the following assertions holds
		\begin{enumerate}
			\item [$(i)$] $\Gamma_{I(G)}$ is regular if and only if $|S_1|= |S_2|$.
			\item [$(ii)$] $\Gamma_{I(G)}$ is biregular if and only if $|S_1| \neq |S_2|$.
		\end{enumerate}    
\end{theorem}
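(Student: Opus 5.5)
The plan is to read both assertions off the degree formula in the preceding proposition, exactly as in the $S_3=\{e_G\}$ case. By that proposition, every vertex $(0,g)$ has degree $|S_1|+|I(G)|$ and every vertex $(1,g)$ has degree $|S_2|+|I(G)|$. Both sides $\{0\}\times G$ and $\{1\}\times G$ are nonempty, so $\Gamma_{I(G)}$ takes exactly the two degree values $|S_1|+|I(G)|$ and $|S_2|+|I(G)|$, possibly equal. Everything reduces to comparing these two numbers.

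For $(i)$: if $|S_1|=|S_2|$, the two values coincide and $\Gamma_{I(G)}$ is $(|S_1|+|I(G)|)$-regular. Conversely, if $\Gamma_{I(G)}$ is regular, pick any $g\in G$. Equating $\deg((0,g))$ and $\deg((1,g))$ gives $|S_1|+|I(G)|=|S_2|+|I(G)|$. Cancelling $|I(G)|$, which is a fixed finite number, yields $|S_1|=|S_2|$.

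For $(ii)$: here I would use the convention implicit in the paper that ``biregular'' means exactly two distinct degree values, with each vertex having one of them. The vertex set already splits into the two sides, each of constant degree. So the graph has exactly two distinct degrees precisely when $|S_1|+|I(G)|\neq|S_2|+|I(G)|$, that is, when $|S_1|\neq|S_2|$. This makes $(ii)$ the logical complement of $(i)$.

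The one point needing care is to check that the degree formula really holds for $S_3=I(G)$. The concern is that a cross edge might coincide with, or be counted together with, an edge inside one side. It cannot: cross edges always join $(0,x)$ to $(1,y)$, so they are disjoint from the edges of $\Gamma_0$ and $\Gamma_1$. The cross-edge count at each vertex is $|I(G)|$ by the proposition that $\Gamma^{ce}$ is $|I(G)|$-regular. This uses the inverse-closedness of $I(G)$, so the count is the same from the $1$-side, and the fact that right multiplication by distinct involutions gives distinct neighbours. If $I(G)=\emptyset$ (when $|G|$ is odd), the statement degenerates harmlessly, since the degrees are just $|S_1|$ and $|S_2|$.
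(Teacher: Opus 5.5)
Your proposal is correct and takes the same route as the paper. The paper simply reads both assertions off the degree formula $\deg((0,g))=|S_1|+|I(G)|$ and $\deg((1,g))=|S_2|+|I(G)|$, calling the result clear, and your added checks supply the details it omits. Those checks are that cross edges contribute exactly $|I(G)|$ at every vertex without double-counting, and that ``biregular'' must mean exactly two distinct degrees for $(ii)$ to be the complement of $(i)$.
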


%\subsection{Graph Connectivity}

\begin{proposition}\label{ce connected if I(G) generates}
    The cross-edges subgraph $\Gamma^{ce}$ is connected if and only if $I(G)$ generates the whole group. 
\end{proposition}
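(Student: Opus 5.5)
The plan is to describe walks in $\Gamma^{ce}$ explicitly. By definition, $\Gamma^{ce}$ has vertex set $\{0,1\}\times G$, and its only edges are the cross edges $(i,x)(j,y)$ with $i\neq j$ and $xy^{-1}\in I(G)$. Every involution is its own inverse, so this relation is symmetric and $\Gamma^{ce}$ is a bipartite graph with parts $\{0\}\times G$ and $\{1\}\times G$.

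\textbf{Step 1: walks as products of involutions.} A step from $(i,x)$ goes to $(1-i, sx)$ for some $s\in I(G)$. By induction on the length, a walk of length $k$ starting at $(i,x)$ ends at
\begin{align*}
(i+k \bmod 2,\; s_k s_{k-1}\cdots s_1 x), \qquad s_1,\dots,s_k\in I(G).
\end{align*}
Conversely, any such word gives a walk.

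\textbf{Step 2: necessity.} Assume $\Gamma^{ce}$ is connected and let $g\in G$. There is a walk from $(0,e_G)$ to $(0,g)$, so by Step 1, $g$ is a product of involutions. Hence $\langle I(G)\rangle=G$. (If $I(G)=\emptyset$, then $\Gamma^{ce}$ has no edges, so it is disconnected; this is consistent with the claim.)

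\textbf{Step 3: sufficiency.} Assume $\langle I(G)\rangle = G$. Since every involution is self-inverse, every element of $G$ is a word $t_m\cdots t_1$ in involutions. Given vertices $(i,x)$ and $(j,y)$, write $yx^{-1}=t_m\cdots t_1$. Step 1 then gives a walk from $(i,x)$ to $(i+m \bmod 2,\, y)$. If $m\equiv j-i \pmod 2$, this walk ends at $(j,y)$ and we are done.

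\textbf{Step 4: the parity obstacle.} The main obstacle is the case $m\not\equiv j-i \pmod 2$. Padding a word with $tt=e_G$ changes its length by $2$, so it does not change parity. What is needed is a representation of $yx^{-1}$ of the other parity. Equivalently, $e_G$ must be a product of an odd number of involutions, so that $(0,e_G)$ and $(1,e_G)$ lie in the same component.

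I would try to produce such an odd relation from the generating hypothesis. For example, if two involutions $s,t$ do not commute, consider their dihedral subgroup and look for a relation of odd length such as $(st)^k=e_G$ with suitable structure. Alternatively, if $st=u$ for some involution $u$, then $stu=e_G$ is a relation of length three.

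This step must be checked carefully. If no odd relation exists, for instance when $G=\mathbb{Z}_2$, the bipartition $\{0\}\times G$, $\{1\}\times G$ splits into two components: one of them pairs $(0,x)$ with words of even length, and the other with words of odd length. In that case the equivalence can only hold once the paper's intended convention for $\Gamma^{ce}$ is pinned down. I expect Step 4 to be where the argument either closes or needs that convention made explicit. Steps 1--3 are routine.
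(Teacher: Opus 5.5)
Your Steps 1--3 are correct, and Step 2 is a complete proof of the ``only if'' direction. The gap is Step 4, and no choice of convention closes it. Take $\Gamma^{ce}$ as the paper defines it: vertex set $\{0,1\}\times G$, with edges exactly the cross edges $(0,x)(1,sx)$ for $s\in I(G)$. Then the ``if'' direction is false, and your own $\mathbb{Z}_2$ computation already refutes it, since there $\Gamma^{ce}$ is a perfect matching on four vertices. The paper gives no proof of this proposition. The group it invokes right afterwards is itself a counterexample. In $\mathrm{Sym}_3$ the involutions are exactly $\theta_1,\theta_2,\theta_3$, so $xy^{-1}\in I(G)$ holds precisely when $x$ and $y$ have opposite sign. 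Hence $\Gamma^{ce}$ is two disjoint copies of $K_{3,3}$, one on $\{0\}\times\{\rho_1,\rho_2,\rho_3\}\cup\{1\}\times\{\theta_1,\theta_2,\theta_3\}$ and one on the other six vertices, even though the transpositions generate $\mathrm{Sym}_3$. The same happens for every dihedral group of order $2n$ with $n$ odd. Your ideas for producing an odd relation therefore cannot work in general. The word $(st)^k$ has even length; it helps only when $st$ has even order $2m$, so that $(st)^m$ is an involution. In $\mathrm{Sym}_3$ a product of two distinct transpositions is a $3$-cycle, never an involution. The sign homomorphism shows that no odd-length relation among involutions exists there at all.

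What your Steps 1 and 4 actually prove is the correct criterion. Let $G^{+}$ be the set of products of an even number of involutions; it is a normal subgroup of index at most $2$ in $\langle I(G)\rangle$. Then $(0,x)$ and $(0,y)$ lie in one component iff $yx^{-1}\in G^{+}$. Likewise $(0,x)$ and $(1,y)$ lie in one component iff $yx^{-1}$ is a product of an odd number of involutions. Hence $\Gamma^{ce}$ is connected iff $\langle I(G)\rangle=G$ \emph{and} $e_G$ is a product of an odd number of involutions. Equivalently, $I(G)$ generates $G$ and there is no homomorphism $G\to\{\pm1\}$ sending every involution to $-1$. When $I(G)$ generates $G$ but no odd relation exists, $\Gamma^{ce}$ has exactly two components. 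The equivalence as stated holds only for a different graph. One option is cross-edge set $I(G)\cup\{e_G\}$, where the edge $(0,e_G)(1,e_G)$ supplies the missing odd relation. Another is $\mathrm{Cay}(G,I(G))$ on a single copy of $G$. So rather than leaving Step 4 open, you should conclude that the proposition is false as written and prove the corrected criterion. This also breaks the paper's next remark that every such Bi-Cayley graph over $\mathrm{Sym}_3$ is connected; $S_1=S_2=\{\rho_2,\rho_3\}$ is a counterexample.
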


If a group is generated by its involutions, such as the Symmetric group $\mathrm{Sym}_3$, then for any choice of connecting sets $S_1$ and $S_2$, the associated Bi-Cayley graph $\Gamma$ will always be connected. This is an immediate result from Proposition \ref{ce connected if I(G) generates}. Moreover, it serves as a motivation to further furnish the characterization of the connectivity of the whole Bi-Cayley structure. 

\begin{theorem}
    Let $G$ be a finite group and let $\Gamma_{I(G)}$ be the associated Bi-Cayley graph. Set $H = \langle S_1 \cup S_2 \cup I(G) \rangle.$
        %\begin{align*}
         %   H = \langle S_1 \cup S_2 \cup I(G) \rangle.
        %\end{align*}
    The components of $\Gamma_{I(G)}$ are precisely $\{0,1 \} \times gH$ 
        %\begin{align*}
         %   \{0,1 \} \times gH
        %\end{align*}
    for all $g \in G$. In particular, $\Gamma_{I(G)}$ is connected if and only if $H$ generates $G$. 
\end{theorem}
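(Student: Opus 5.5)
The plan has two halves. First, show that no edge leaves a block $\{0,1\}\times(\text{coset of } H)$. Second, show that each block is connected.

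For the first half, recall the adjacency rule: $(i,x)\sim(j,y)$ forces $xy^{-1}\in S_1\cup S_2\cup I(G)\subseteq H$. Hence $Hx=Hy$. With the paper's convention $xy^{-1}\in S$, the natural blocks are therefore the right cosets $Hg$. These coincide with $gH$ when $H$ is normal, e.g.\ when $H=G$ or $G$ is abelian, and I would write the argument with $Hg$. So every component lies inside some block $\{0,1\}\times Hg$.

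For the second half, fix $(0,g)$ and let $R_i=\{x : (i,x)\text{ is reachable from }(0,g)\}$. Left multiplication behaves as follows. By $S_1$, $R_0$ is closed under left multiplication ($S_1R_0\subseteq R_0$). By $S_2$, $S_2R_1\subseteq R_1$. Cross edges give $I(G)R_0\subseteq R_1$ and $I(G)R_1\subseteq R_0$. Since all the sets are inverse closed and $G$ is finite, I would like to deduce $R_0=R_1=Hg$ by writing an arbitrary $h\in H$ as a word in $S_1\cup S_2\cup I(G)$ and following the word letter by letter.

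The main obstacle is that each letter can only be applied from the correct side. A letter from $S_1$ can be applied only to a vertex on the $0$-side, a letter from $S_2$ only to one on the $1$-side, and each involution letter also switches sides. So the word-following argument only proves the weaker statement
\[
R_0\cup R_1 \text{ contains } Hg,
\]
not that both $(0,x)$ and $(1,x)$ are reachable for every $x\in Hg$. To close the gap I would try to produce, for some $x$, a walk from $(0,x)$ to $(1,x)$. Equivalently, I would need a product of generators that uses an odd number of involution letters and equals $e_G$, with the $S_1$-letters and $S_2$-letters occurring on the appropriate sides.

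I expect this step to fail in general, and in fact the statement as written appears to be false. Take $G=\mathbb{Z}_6$, $S_1=S_2=\{\bar2,\bar4\}$, and $I(G)=\{\bar3\}$, so that $H=G$. The component of $(0,\bar0)$ is
\[
\{0\}\times\{\bar0,\bar2,\bar4\}\ \cup\ \{1\}\times\{\bar1,\bar3,\bar5\},
\]
which has $6$ vertices rather than $12$, so $\Gamma_{I(G)}$ is disconnected although $H=G$. The proof of the stated theorem can therefore only go through under an extra hypothesis. The natural one is that the subgroup of $G\times\mathbb{Z}_2$ generated by $(s,0)$ for $s\in S_1\cup S_2$ and $(t,1)$ for $t\in I(G)$ contains $(e_G,1)$; a cleaner sufficient condition is $I(G)\cap S_1\neq\emptyset$ or $I(G)\cap S_2\neq\emptyset$. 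For instance, if $t\in I(G)\cap S_1$, then $(0,x)\sim(0,tx)\sim(1,x)$ gives the needed walk.

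Under such a hypothesis, the plan finishes as follows. First obtain $(1,x)$ from $(0,x)$. Then $R_0=R_1$, this common set is closed under left multiplication by all generators, and so it equals $Hg$. This proves that the components are exactly $\{0,1\}\times Hg$, with connectivity holding iff $H=G$.
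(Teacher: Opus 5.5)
Your refutation is correct, and it is the right response here. The paper states this theorem without proof, and as written it is false. In your example ($G=\mathbb{Z}_6$, $S_1=S_2=\{\bar{2},\bar{4}\}$, $I(G)=\{\bar{3}\}$), every edge inside a side preserves the parity of the group coordinate. Every cross edge changes both the side and that parity. So $\{0\}\times\{\bar{0},\bar{2},\bar{4}\}\cup\{1\}\times\{\bar{1},\bar{3},\bar{5}\}$ is a whole component even though $H=G$. Only the implication ``connected $\Rightarrow$ $H=G$'' (your first half) survives.

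The same parity obstruction defeats the claims the paper places just before the theorem. For $G=\mathrm{Sym}_3$ the involutions generate $G$, yet $\Gamma^{ce}$ is disconnected. With $S_1=S_2=\{\rho_2,\rho_3\}$, the Bi-Cayley graph splits into $\{0\}\times\{\rho_1,\rho_2,\rho_3\}\cup\{1\}\times\{\theta_1,\theta_2,\theta_3\}$ and its complement. Your remark that, under the convention $xy^{-1}\in S$, the blocks are the right cosets $Hg$ is also accurate.

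Two of your intermediate claims are true, but what you wrote does not justify them.
\begin{enumerate}
\item Letter-by-letter word-following does not give $R_0\cup R_1\supseteq Hg$, for the very reason you name. An $S_1$-letter cannot be applied at a $1$-side vertex, and the detour $(1,x)\to(0,tx)\to(0,stx)\to(1,tstx)$ applies $tst$, not $s$.
\item For nonabelian $G$, a product equal to $(e_G,1)$ in $G\times\mathbb{Z}_2$ does not immediately give a walk from $(0,x)$ to $(1,x)$, since its letters need not sit on the correct sides.
\end{enumerate}

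Both are settled by one observation. The subgroup $N=\langle tt' : t,t'\in I(G)\rangle$ is normal in $G$, and for a fixed $t_0\in I(G)$ and every $x\in G$ one has $t_0xt_0=x\,(x^{-1}t_0x)\,t_0\in xN$. Using the automorphisms $(i,x)\mapsto(i,xg)$, the vertices reachable from $(0,e_G)$ are exactly $\{0\}\times K\cup\{1\}\times t_0K$, where $K=\langle S_1\cup t_0S_2t_0\cup N\rangle=\langle S_1\cup S_2\cup N\rangle$. Since $t_0Kt_0=K$, one gets $H=K\cup t_0K$ and $[H:K]\le 2$.

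So each block $\{0,1\}\times Hg$ is a single component precisely when $I(G)\cap K\neq\emptyset$, which is equivalent to your $G\times\mathbb{Z}_2$ condition. Otherwise the block splits into exactly two components. In particular, $\Gamma_{I(G)}$ is connected if and only if $K=G$. In your example $N=\{\bar{0}\}$ and $K=\{\bar{0},\bar{2},\bar{4}\}$, so $\bar{3}\notin K$.

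Your simpler sufficient condition $I(G)\cap(S_1\cup S_2)\neq\emptyset$, with its explicit two-step walk, is already fully proved as you wrote it.
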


%\subsection{Clique Number}
In general, when we talk about the lower bound for a clique in the Bi-Cayley graph, since the subgraph $\Gamma_0$ and $\Gamma_1$ are induced subgraph, we trivially conclude that $\omega(\Gamma_{I(G)}) \geq \max \{ \omega(\Gamma_0), \omega(\Gamma_1) \}.$
    %\begin{align*}
    %    \omega(\Gamma_{I(G)}) \geq \max \{ \omega(\Gamma_0), \omega(\Gamma_1) \}.
    %\end{align*}

In this setting, a clique of size greater than two may be constructed from both side since the edges connecting the two sides now need not necessarily be a perfect matching. Thus we will define a new terms regarding the clique that constitute to both of the sides.

\begin{definition}
    A clique $C \subseteq V(\Gamma_{I(G)})$ is called mixed clique if it contains vertices from both $0$ and $1$-side. 
\end{definition}

\begin{definition}
    A subset $S \subseteq G$ is involution-separating if $S \cap (H \setminus \{e_G \}) = \emptyset$
        %\begin{align*}
        %    S \cap (H \setminus \{e_G \}) = \emptyset
        %\end{align*}
    for every nontrivial elementary abelian 2-subgroup $H$ of $G$.
\end{definition}

Several claims concerning the characterization of mixed clique will be presented, though for now we may only give the results with some restriction. 

\begin{proposition}
    Let $G$ be a finite group and $\Gamma_{I(G)}$ be the associated Bi-Cayley graph such that $S_1$ and $S_2$ are involution-separating. Let $C$ be a mixed clique containing a vertex $(1,h)$ (resp. $(0,h)$). Suppose that, for some $k \in \mathbb{N}$, 
        \begin{align*}
            (0, g_i) = (0, a_ih) \in C, \hspace{0.3cm}i= 1,2, \dots , k \text{ (resp. }(1,g_i) = (1,ha_i) \in C)
        \end{align*}
    where each $a_i \in I(G)$. If there exists distinct indices $i \neq j$ such that the involutions $a_i$ and $a_j$ commute, then $|C \cap (\{0 \} \times G)| \leq 2 \text{ (resp. } |C \cap (\{1 \} \times G)| \leq 2).$
        %\begin{align*}
        %    |C \cap (\{0 \} \times G)| \leq 2 \text{ (resp. } |C \cap (\{1 \} \times G)| \leq 2).
        %\end{align*}
\end{proposition}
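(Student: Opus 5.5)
The plan is to translate every membership and adjacency condition in $C$ into a statement about products of involutions, and then let the involution-separating hypothesis on $S_1$ do the work. I treat the case where $C$ contains $(1,h)$; the case $(0,h)$ is symmetric, with $S_2$ in place of $S_1$ and right multiplication in place of left.

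First, I fix the form of the $0$-side vertices of $C$. By the cross-edge rule with $S_3=I(G)$, a vertex $(0,g)$ is adjacent to $(1,h)$ exactly when $gh^{-1}\in I(G)$. Since $C$ is a clique containing $(1,h)$, every vertex of $C\cap(\{0\}\times G)$ therefore has the form $(0,a_ih)$ with $a_i\in I(G)$. This justifies the parametrisation in the statement, and it shows that the list $(0,a_1h),\dots,(0,a_kh)$ may be taken to be all of $C\cap(\{0\}\times G)$. Distinct vertices give distinct involutions, so $a_i\neq a_j$ for $i\neq j$.

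Second, I compute the internal adjacency on the $0$-side. For $i\neq j$, the vertices $(0,a_ih)$ and $(0,a_jh)$ lie in the clique $C$, so they are adjacent in $\Gamma_0$. This means
\[
(a_ih)(a_jh)^{-1}=a_ia_j^{-1}=a_ia_j\in S_1 ,
\]
using $a_j^{-1}=a_j$.

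Third comes the key step, which uses the commuting pair. Suppose $a_i$ and $a_j$ commute with $i\neq j$. Then $(a_ia_j)^2=a_i^2a_j^2=e_G$, and $a_ia_j\neq e_G$ because $a_i\neq a_j$. Hence
\[
H=\{e_G,a_i,a_j,a_ia_j\}
\]
is a nontrivial elementary abelian $2$-subgroup of $G$ (a Klein four-group), and $a_ia_j\in H\setminus\{e_G\}$. The previous step gives $a_ia_j\in S_1$, so $a_ia_j\in S_1\cap(H\setminus\{e_G\})$. This contradicts the assumption that $S_1$ is involution-separating.

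So the hypothesis of the proposition can never be realised inside a clique: two $0$-side vertices of $C$ whose involutions commute cannot coexist. Consequently the implication holds, and under the stated hypothesis the bound $|C\cap(\{0\}\times G)|\le 2$ is satisfied (vacuously). I would also record the stronger and more useful reformulation that the involutions $a_1,\dots,a_k$ attached to the $0$-side of a mixed clique pairwise do not commute. The same argument, with $(1,ha_i)$ and $h^{-1}a_i$, handles the resp.\ case: adjacency of $(1,ha_i)$ and $(1,ha_j)$ in $\Gamma_1$ gives $ha_ia_jh^{-1}\in S_2$. This conjugate is again a nontrivial element of the Klein four-group $hHh^{-1}$, which contradicts $S_2$ being involution-separating.

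The main obstacle is interpretive rather than technical. I need to make sure the cross-edge convention (left versus right multiplication) matches the one used for the vertices $(0,a_ih)$ and $(1,ha_i)$, so that the relevant products are $a_ia_j$, respectively its conjugate by $h$. I also need to state clearly that the conclusion is obtained because the hypothesis leads to a contradiction.
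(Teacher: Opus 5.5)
Your proposal is correct and follows essentially the same argument as the paper: you write the $0$-side vertices of $C$ as $(0,a_ih)$, use adjacency in $\Gamma_0$ to get $a_ia_j\in S_1$, and note that a commuting pair puts $a_ia_j$ in the nontrivial elementary abelian $2$-subgroup $\langle a_i,a_j\rangle$, contradicting that $S_1$ is involution-separating. You also make two accurate refinements rather than take a different route: the contradiction comes from the commuting-pair hypothesis alone, so the bound holds vacuously and the paper's supposition of three $0$-side vertices is never actually used; and you treat the resp.\ case explicitly via $ha_ia_jh^{-1}\in hHh^{-1}\setminus\{e_G\}$, which the paper only calls ``similar''.
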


\begin{proof}
    We prove the bound for $\{0 \} \times G$, the argument for $\{ 1 \} \times G$ is similar. Suppose, for contradiction, that there exists a mixed clique $C$ such that $|C \cap (\{0 \} \times G)| \geq 3.$
        %\begin{align*}
        %    |C \cap (\{0 \} \times G)| \geq 3.
        %\end{align*}
    Then there exists three distinct elements $(0, g_1), (0, g_2), (0,g_3) \in C$ and at least one vertex $(1, h) \in C$. Since $C$ is a clique, then for each $i = 1,2, \dots, k$, $hg_i^{-1} \in I(G).$ 
        %\begin{align*}
        %    hg_i^{-1} \in I(G).
        %\end{align*}
    We denote $a_i := hg_i^{-1}$. Thus, each $a_i$ is an involution. Now, for any $i, j \in \{1,2, 3 \}$, $g_jg_i^{-1} = (hg_j^{-1})^{-1}(hg_i^{-1}) = a_j^{-1}a_i = a_ja_i.$
        %\begin{align*}
        %    g_jg_i^{-1} = (hg_j^{-1})^{-1}(hg_i^{-1}) = a_j^{-1}a_i = a_ja_i.
        %\end{align*}
    Therefore, for all distinct $i,j$, $a_ia_j \in S_0.$
        %\begin{align}
         %   a_ia_j \in S_0.
        %\end{align}

    Since there exist indices $i,j$ such that $a_i$ and $a_j$ commute, consider the subgroup $H = \langle a_i, a_j \rangle.$
        %\begin{align*}
        %    H = \langle a_i, a_j \rangle.
        %\end{align*}
    It is clear that $a_i a_j \in H \setminus \{e_G \}$ and in itself is a subgroup generated by commuting involutions, thus $H$ is elementary abelian $2$-subgroup. Therefore, $a_i a_j \in S_0 \cap (H \setminus \{e_G \})$
        %\begin{align*}
        %    a_i a_j \in S_0 \cap (H \setminus \{e_G \})
        %\end{align*}
    which contradicts the involution-separating property of $S_0$. Thus, the assumption is false and we conclude that $|C \cap (\{ 0\} \times G)| \leq 2.$
    %    \begin{align*}
    %        |C \cap (\{ 0\} \times G)| \leq 2.
    %    \end{align*}
\end{proof}

Though stronger results may be attained, the restriction of involution-separating property and commuting involution gives us a glimpse of the more general results on the clique number. Moreso, if the suitable condition that the restriction does occur, we have that 
    \begin{align*}
        \omega(\Gamma_{I(G)}) \leq \max \{ \omega(\Gamma_0), \omega(\Gamma_1), 4 \}.
    \end{align*}
where $4$ is the clique number of $\Gamma^{ce}$.
%
%We then proceed to present several example regarding the clique number. 
%
%\begin{example}
%    
%\end{example}

\section{Conclusion}\label{sec:conclusion}
This paper presents some results on Bi-Cayley graphs over cyclic groups of order $p^2q^2$ with connection sets defined by element orders. Explicit results are obtained for several structural and combinatorial parameters, revealing how order-based restrictions influence global graph properties. The analysis also suggests natural extensions to Bi-Cayley graphs over more general finite groups.
%\vspace{0.7cm}
%\\
%\textbf{Acknowledgements} The author would like to express sincere gratitude to . . . . .

\end{document}